\documentclass[10pt,oneside,a4paper]{amsart}

\usepackage[english]{babel}
\usepackage{faktor} 
\usepackage{graphicx} 
\usepackage{amsmath,amsfonts,amsthm} 
\usepackage{mathtools} 
\usepackage{amssymb} 
\usepackage[inline]{enumitem} 
\usepackage{tabularx} 
\usepackage{tikz-cd} 
\usepackage{tikz} 
\usetikzlibrary{arrows} 
\usetikzlibrary{babel} 
\usepackage{adjustbox} 
\usepackage{hyperref}

\usepackage[colorinlistoftodos]{todonotes}
\usepackage{leftidx}
\usepackage{wrapfig}
\usepackage[margin=1in]{geometry}
\usepackage[utf8]{inputenc}
\usepackage{tikzit}


\tikzstyle{black dot}=[fill={rgb,255: red,255; green,191; blue,191}, draw=black, shape=circle]
\tikzstyle{circle vertex}=[fill=white, draw=black, shape=circle]
\tikzstyle{small dot}=[scale=0.75]
\tikzstyle{large circle}=[fill=white, draw=black, shape=circle, minimum size=4.75cm]
\tikzstyle{blue small dot}=[fill=white, draw=blue, shape=circle]
\tikzstyle{full blue}=[fill=blue, draw=blue, shape=circle]

\tikzstyle{dashed grey}=[-, dashed, fill={rgb,255: red,191; green,191; blue,191}]
\tikzstyle{arrow}=[->]
\tikzstyle{none white fill}=[-, fill=white]
\tikzstyle{fill grey}=[-, fill={rgb,255: red,191; green,191; blue,191}]
\tikzstyle{dashed line}=[-, dashed]
\tikzstyle{right_arrow}=[<-]
\tikzstyle{blue arrow}=[draw=blue, <->]
\tikzstyle{double arrow}=[<->]
\tikzstyle{mapsto}=[{|->}]
\tikzstyle{blue edge}=[-, draw=blue]
\tikzstyle{blue dashed arrow}=[draw=blue, ->, dashed]
\tikzstyle{red line}=[-, draw=red]
\tikzstyle{red dashed arrow}=[dashed, ->, draw=red]
\tikzstyle{blue arrow}=[draw=blue, ->]
\tikzstyle{green dashed}=[-, dashed, draw=green]
\tikzstyle{dashed arrow}=[->, dashed]
\tikzstyle{fill grey no line}=[-, fill={rgb,255: red,191; green,191; blue,191}, draw=none]
\tikzstyle{new edge style 0}=[<->]
\tikzstyle{grey line}=[-, draw={rgb,255: red,191; green,191; blue,191}]
\tikzstyle{fill green}=[-, fill=green, draw=none]
\tikzstyle{fill red}=[-, draw=none, fill=red]


\newtheorem{theorem}{Theorem}[section]
\newtheorem{lemma}[theorem]{Lemma}
\newtheorem{prop}[theorem]{Proposition}
\newtheorem{cor}[theorem]{Corollary}
\theoremstyle{definition}
	\newtheorem{mydef}[theorem]{Definition}
    \newtheorem{constr}[theorem]{Construction}

\theoremstyle{remark}
	\newtheorem{opm}[theorem]{Remark}
    
    \newtheorem{vb}[theorem]{Example}


\newcommand{\thistheoremname}{}
\newtheorem*{genericthm*}{\thistheoremname}
\newenvironment{namedthm*}[1]
  {\renewcommand{\thistheoremname}{#1}%
   \begin{genericthm*}}
  {\end{genericthm*}}

\theoremstyle{definition}

\newcommand{\thisexname}{}
\newtheorem*{genericex*}{\thisexname}
\newenvironment{namedex*}[1]
  {\renewcommand{\thisexname}{#1}%
   \begin{genericex*}}
  {\end{genericex*}}


\newcommand{\inv}{^{-1}}

\newcommand{\sub}{\subseteq}

\newcommand{\N}{\mathbb{N}}
\newcommand{\Z}{\mathbb{Z}}

\newcommand{\Oo}{\mathcal{O}}

\newcommand{\C}{\mathcal{C}}
\newcommand{\U}{\mathcal{U}}

\newcommand{\eps}{\epsilon}

\newcommand{\A}{\mathcal{A}}
\newcommand{\si}{\sigma}
\newcommand{\te}{\theta}
\newcommand{\al}{\alpha}

\newcommand{\be}{\beta}
\newcommand{\ga}{\gamma}

\makeatletter
\newcommand{\colim@}[2]{%
  \vtop{\m@th\ialign{##\cr
    \hfil$#1\operator@font colim$\hfil\cr
    \noalign{\nointerlineskip\kern1.5\ex@}#2\cr
    \noalign{\nointerlineskip\kern-\ex@}\cr}}%
}
\newcommand{\dcolim}{%
  \mathop{\mathpalette\colim@{\rightarrowfill@\textstyle}}\nmlimits@
}
\makeatother




\DeclareMathOperator{\Hom}{Hom}
\DeclareMathOperator{\Ob}{Ob}
\DeclareMathOperator{\Fun}{Fun}
\DeclareMathOperator{\Image}{Im}
\DeclareMathOperator{\sgn}{sgn}
\DeclareMathOperator{\Clr}{Clr}

\DeclareMathOperator{\Tree}{Tree}

\DeclareMathOperator{\Multitr}{Multi\Delta}
\DeclareMathOperator{\Multitrp}{Multi\Delta_{+}}
\DeclareMathOperator{\FS}{F_{2}S}
\DeclareMathOperator{\NSOp}{NSOp}
\DeclareMathOperator{\mNSOp}{mNSOp}
\DeclareMathOperator{\mmNSOp}{(m)NSOp}
\DeclareMathOperator{\Patch}{Patch}

\DeclareMathOperator{\Quilt}{Quilt}
\DeclareMathOperator{\mQuilt}{mQuilt}

\DeclareMathOperator{\Quiltb}{Quilt_{b}}

\DeclareMathOperator{\Bracee}{Brace}

\newcommand{\End}{\textbf{End}}

\newcommand{\CGS}{\textbf{C}_{GS}}
\newcommand{\CGSnr}{\overline{\textbf{C}}_{GS}}
\newcommand{\tre}{\vartriangleleft}
\newcommand{\treq}{\unlhd}

\newcommand{\Ss}{\mathbb{S}}

\newcommand{\fromto}[2]{#1_{1},\ldots,#1_{#2}}
\newcommand{\nth}[1]{#1_{1},\ldots,#1_{n}}

\newcommand{\rh}{\rangle}
\newcommand{\lh}{\langle}
\newcommand{\st}{^{*}}
\newcommand{\bu}{\bullet}

\newcommand{\hotimes}{\otimes_{H}}
\newcommand{\Linf}{L_{\infty}}

\makeatletter
\newcommand{\proofpart}[2]{%
  \par
  \addvspace{\medskipamount}%
  \noindent\emph{Step #1: #2 }\par\nobreak
  \addvspace{\smallskipamount}%
  \@afterheading
}
\makeatother


\newcommand{\Te}{\Theta}

\newcommand{\LL}{\mathcal{L}}

\newcommand{\hs}{^{\#}}


\title{Operadic structure on the Gerstenhaber-Schack complex for prestacks}	

\author{Hoang Dinh Van}
\address[Hoang Dinh Van]{Faculty of Applied Sciences, University of Technology and Education, 1 Vo Van Ngan, Thu Duc, Ho Chi Minh city, Vietnam.}
\email{hoangdv@hcmute.edu.vn}

\author{Lander Hermans}
\address[Lander Hermans]{Universiteit Antwerpen, Departement Wiskunde, Middelheimcampus,
	Middelheimlaan 1,
	2020 Antwerp, Belgium}
\email{lander.hermans@uantwerpen.be}

\author{Wendy Lowen} 
\address[Wendy Lowen]{Universiteit Antwerpen, Departement Wiskunde-Informatica, Middelheimcampus,
Middelheimlaan 1,
2020 Antwerp, Belgium (main affiliation)}
\address{Laboratory of Algebraic Geometry, National Research University Higher School of Economics, Russian Federation}
\email{wendy.lowen@uantwerpen.be}

\thanks{This project has received funding from the European Research Council (ERC) under the European Union's Horizon 2020 research and innovation programme (grant agreement No. 817762). The second named auther holds a PhD fellowship of the Research Foundation - Flanders (FWO). The third named author was partially supported by the HSE University Basic Research Program.\\
MSC2020: 18F20, 18M60\\
Keywords: prestack, Hochschild cohomology, Gerstenhaber-Schack complex, operad, brace operations, $L_{\infty}$-algebra}

\begin{document}

\begin{abstract}

We introduce an operad $\Patch$ which acts on the Gerstenhaber-Schack complex of a prestack as defined by Dinh Van and Lowen, and which in particular allows us to endow this complex with an underlying $L_{\infty}$-structure. We make use of the operad $\Quilt$ which was used by Hawkins in order to solve the presheaf case. Due to the additional difficulty posed by the presence of twists, we have to use $\Quilt$ in a fundamentally different way (even for presheaves) in order to allow for an extension to prestacks. 
\end{abstract}

\maketitle

\section{Introduction}

The deformation theory of algebras due to Gerstenhaber furnishes the guiding example for algebraic deformation theory. For an algebra $A$, the Hochschild complex $\mathbf{C}(A)$ is a dg Lie algebra governing the deformation theory of $A$ through the Maurer-Cartan formalism. This dg Lie structure is the shadow of a richer operadic structure, which can be expressed by saying that $\mathbf{C}(A)$ is a homotopy G-algebra \cite{gerstenhabervoronov}. This structure, which captures both the brace operations and the cup product, is a special case of a $B_{\infty}$-structure \cite{getzlerjones}. Importantly, this purely algebraic structure constitutes a stepping stone in the proof of the Deligne conjecture, proving $\mathbf{C}(A)$ to be an algebra over the chain little disk operad \cite{mccluresmith} \cite{kontsevichsoibelmandeligne}.

The deformation theory of algebras was later extended to presheaves of algebras by Gerstenhaber and Schack, who in particular introduced a bicomplex computing the natural bimodule Ext groups \cite{gerstenhaberschack}, \cite{gerstenhaberschack1}. However, this GS-complex $\mathbf{C}(\A)$ of a presheaf $\A$ does not control deformations of $\A$ as a presheaf, but rather as a \emph{twisted} presheaf, see for instance \cite{lowenalg}, \cite{DVL}. From this point of view, it is more natural to develop deformation theory at once on the level of twisted presheaves or, more generally prestacks, that is, pseudofunctors taking values in the 2-category of linear categories (over some fixed commutative ground ring). In \cite{DVL}, Dinh Van and Lowen established a Gerstenhaber-Schack complex for prestacks, involving a differential which features an infinite sequence of higher components in addition to the classical simplicial and Hochshild differentials. Further, for a prestack $\A$, they construct a homotopy equivalence $\CGS(\A) \cong \mathbf{CC}(\A!)$ between the Gerstenhaber-Schack complex $\CGS(\A)$ and the Hochschild complex $\mathbf{CC}(\A!)$ of the Grothendieck construction $\A!$ of $\A$. Through homotopy transfer, this endows the GS-complex with an $L_{\infty}$-structure. This result improves upon the existence of a quasi-isomorphism, which is a consequence of the Cohomology Comparison Theorem due to Gerstenhaber and Schack for presheaves \cite{gerstenhaberschack1} and to Lowen and Van den Bergh for prestacks \cite{lowenvandenberghCCT}.

Although the GS-complex does not possess a $B_{\infty}$-structure, its elements - linear maps involving different levels of the prestack - can be composed in an operadic fashion. As such, it makes sense to investigate this higher structure in its own right, and use it directly in order to establish an underlying $L_{\infty}$-structure.
For particular types of presheaves, explicit $L_{\infty}$-structures on the GS-complex have been established by Fr{\'e}gier, Markl and Yau in \cite{FMY09} and by Barmeier and Fr{\'e}gier in \cite{barmeierfregier}.

Let $\Bracee$ be the brace operad and $\FS$ the homotopy G-operad.
In the case of a presheaf $(\A, m, f)$, in \cite{hawkins}, Hawkins introduces an operad $\Quilt \subseteq \FS \otimes_H \Bracee$ which he later extends to an operad $\mQuilt$ acting on the GS-complex. These operads are naturally endowed with $L_{\infty}$-operations as desired.
The action of $\Quilt$ on the GS-complex considered by Hawkins only involves the restriction functors $f$ of the presheaf, the multiplication $m$ being incorporated later on in $\mQuilt$. Unfortunately, the way in which functoriality of $f$ is built into these actions, does not allow for an extension to twisted presheaves or prestacks. 

The goal of this paper is to solve the problem of establishing a natural operadic structure with underlying $L_{\infty}$-structure on $\CGS(\A)$ in the case of a general prestack $(\A, m, f, c)$ with twists $c$.
As part of our solution, we use $\Quilt$ in a fundamentally different way in relation to the GS-complex, but still allowing us to make use of the naturally associated $L_{\infty}$-structure from \cite{hawkins}.
 In section \S \ref{parparGS}, we capture the higher structure of $\CGS(\A)$ by introducing the new operad $\Patch \subseteq \mNSOp \otimes_H \NSOp$ over which $\CGS(\A)$ is shown to be an algebra (see Theorem \ref{thmPGS}). Here, $\mmNSOp$ is the operad of nonsymmetric operads (with multiplication).
 
In \cite{gerstenhabervoronov}, Gerstenhaber and Voronov obtain a brace algebra structure on an operad and a homotopy G-algebra structure on an operad with multiplication. Based upon the expression of these results in terms of the underlying operads $\NSOp$ and $\mNSOp$ in \S \ref{parparGV}, we construct a morphism $\Quilt \longrightarrow \Patch_s$ (see Proposition \ref{propQP}) as a restriction of
$$\FS \otimes_H \Bracee \longrightarrow \mNSOp_{st} \otimes_H \NSOp_s,$$
where the operads with subscript denote the (uncolored) graded operads associated to the unsubscripted colored operads. This gives rise to the composition
$$R: \Quilt \longrightarrow \Patch_s \longrightarrow \End(s\CGS(\A))$$
which incorporates the multiplication $m$ and the restrictions $f$ of $\A$.

In \S \ref{parc}, we extend the action $R$ to
$$R_c: \Quiltb[[c]] \longrightarrow \End(s\CGS(\A))$$
in order to incorporate the twists (see Theorem \ref{thmRc}). Here, $\Quiltb[[c]]$ is obtained from an operad of formal power series. Further, we establish $L_{\infty}$-operations on $\Quiltb[[c]]$ extending those on $\Quilt$ from \cite{hawkins} (see Theorem \ref{maintheorem}) by adding an infinite series of higher components containing twists. Under the action of $\Quilt$ this neatly corresponds to and extends the differential on $\CGS(\A)$ obtained in \cite{DVL}. In the final section \ref{pardef} we briefly discuss the relation of this $L_{\infty}$-structure with the deformation theory of the prestack $\A$.

The present work naturally grew out of \cite{DVL}, and at the time when \cite{hawkins} appeared large parts of an operadic approach to the GS complex of a prestack had already been developed independently by us. Given the efficient way in which Hawkins' description of $\Quilt$ gives rise to an $L_{\infty}$-structure, we decided it was worthwhile to build on this approach to the presheaf case, albeit in a way which ``flips and refines'' the action of $\Quilt$ in order to make it useful for general prestacks. As a consequence, when we follow through Hawkins' approach, in comparison we manage to incorporate not only the restrictions $f$, but also the multiplications $m$ in an initial action of $\Quilt$ on the GS complex. In analogy with the way in which Hawkins extends his action from $\Quilt$ to $\mathrm{mQuilt}$ in order to incorporate the multiplications $m$, we establish an extension from $\Quilt$ to $\Quiltb[[c]]$ in order to incorporate the twists $c$.

The current paper is part of a larger project in which it is our goal to understand the homotopy equivalence $\CGS(\A) \cong \mathbf{CC}(\A!)$ from \cite{DVL} operadically, showing in particular that the $L_{\infty}$-structure from \cite{DVL} and the one established in the present paper actually coincide.

\vspace{0,3cm}
\noindent \emph{Acknowledgement.} 
The authors are grateful to an anonymous referee for their meticulous reading of an earlier version of the manuscript, and in particular for several corrections and valuable suggestions which greatly helped improve the paper. The third named author thanks Severin Barmeier and Eli Hawkins for bringing the preprints \cite{barmeierfregier}, respectively \cite{hawkins} to her attention.

\section{Gerstenhaber-Voronov operadically}\label{parparGV}

In the seminal paper  \cite{gerstenhabervoronov}, Gerstenhaber and Voronov define a brace-algebra structure on the totalisation of a non-symmetric operad $\Oo$. Moreover, in presence of a multiplication, they define a homotopy G-algebra structure on $\Oo$ incorporating both the cup product and the Gerstenhaber-bracket. 

In this section we describe the morphisms of operads underlying these results. To this end, in \S \ref{parNSOp}, we recall the colored operad $\NSOp$ encoding non-symmetric operads, and we describe the natural extension $\mNSOp$ which adds a multiplication. Let $\NSOp_s$ and $\mNSOp_{st}$ be their totalised graded (uncolored) operads with suspended, respectively standard degree ( see \S \ref{parbraceNSOp} and \S \ref{parFSmNSOp}). Let $\Bracee$ be the brace operad (see \S \ref{parbrace}) and $\FS$ the Gerstenhaber-Voronov operad encoding homotopy G-algebras (see \S \ref{parFS}). The main goal of this section is the definition of morphisms of dg-operads
$$\phi:\Bracee \longrightarrow \NSOp_s$$
and
$$\bar{\phi}:\FS \longrightarrow \mNSOp_{st}$$
(see Theorems \ref{thmbraceNSOp} and \ref{thmFSmNSOp} respectively). In these definitions, we have to pay particular attention to the choice of signs. For this, we will make use of morphisms of operads $\mmNSOp \longrightarrow \Multitr$ landing in the multicategory associated to the simplex category $\Delta$ (see Proposition \ref{propmapmulti}).

For both uncolored as colored operads, we use the term morphism of operads. In case confusion may arise, we add a subscript to differentiate the uncolored operads from their colored counterparts.

\subsection{The operad $\Bracee$}\label{parbrace}

Throughout, we work over a fixed commutative ground ring $k$. 

The operad $\Bracee$ encoding brace algebras is defined using trees, that is, planar rooted trees. Following the presentation from \cite[\S 2.2]{hawkins}, for a tree $T$ we denote the set of vertices by $V_T$, the set of edges by $E_T$, the ``vertical'' partial order on $V_T$ generated by $E_T$ by $\leq_T$, and the ``horizontal'' partial order on $V_T$ by $\treq_{T}$. 
For $(u,v) \in E_{T}$ we call $u$ the \textit{parent} of $v$ and $v$ a \textit{child} of $u$.

For $n\in \N$, put $[n] := \{ 0,\ldots, n\}$ and $\langle n \rangle := \{ 1,\ldots ,n \}$.

Let $\Tree(n)$ denote the set of trees with vertex set $\langle n \rangle$ and let $\Bracee(n)$ be the free $k$-module on $\Tree(n)$ endowed with the $\Ss_{n}$-action given by permuting the vertices, i.e., $T^{\si}$ is the tree defined by replacing vertex $i$ in $T$ by $\si \inv (i)$.
The operadic composition on $\Bracee$ is based upon substitution of trees, as follows. For trees $T \in \Tree(m)$, $T' \in \Tree(n)$ and $1 \leq i \leq m$, we denote by $Ext(T,T',i) \subseteq \Tree(m + n  -1)$ the set of trees extending $T$ by $T'$ at $i$ (that is, $U \in Ext(T,T',i)$ has $T'$ as a subtree which upon removal reduces to the vertex $i$ of $T$). We then define
$$T \circ_{i} T' := \sum_{U \in Ext(T,T',i)} U.$$
Underlying every such extension lie two maps $\lh n \rh \overset{\al}{\hookrightarrow} \lh n+m-1\rh \overset{\be}{\twoheadrightarrow} \lh m \rh$ acting on the vertices, where $\al$ embeds $n$ vertices consecutively and $\be$ contracts the image of $\al$ to the vertex $i$. We call the pair $(\al,\be)$ the \textit{extension of $m$ by $n$ at $i$}.
We refer to \cite[\S 2.2]{hawkins} for more details.

\subsection{The operad $\FS$}\label{parFS}

The operad $\FS$ encodes homotopy G-algebras \cite{gerstenhabervoronov}. Again, we largely follow the exposition from \cite[\S 2.3]{hawkins}. 
Given a set $A$, a \emph{word} over $A$ is an element of the free monoid on $A$. For a word $W = a_1a_2 \dots a_k$, correspondiong to the function $W: \langle k \rangle \longrightarrow A: i \longmapsto a_i$, the $i$-th \emph{letter} of $W$ is the couple $(i, a_i)$. We will often identify a word with its graph $W = \{ (i, a_i) \,\, |\,\, i \in \langle k \rangle\} \subseteq \langle k \rangle \times A$, writing $(i, a_i) \in W$.

 For $a \in A$, a letter $(i,a) \in W$ is called an \emph{occurrence} of $a$ in $W$. The letter $(i,a)$ is a \emph{caesura} if there is a later occurrence of $a$ in $W$, that is, a letter $(j,a)$ with $i < j$. We say that $a \in A$ is \emph{interposed} in $W$ if $W = \dots ba \dots b \dots$. The \emph{length} of $W: \langle k \rangle \longrightarrow A$ is $|W| = k$.

By definition, $\FS(n)$ is the free $k$-module generated by the words $W$ over $\langle n \rangle$ such that:
\begin{enumerate}
\item $W: \langle k \rangle \longrightarrow \langle n \rangle$ is surjective,
\item $W \neq \ldots {u}{u} \ldots$ (nondegeneracy), and
\item for any $u\neq v \in \langle n \rangle$, $W \neq \ldots {u} \ldots {v} \ldots {u} \ldots {v} \ldots$ (no interlacing).
\end{enumerate}
The set $\FS(n)$ is graded by setting $deg(W) := |W| - n$ and naturally carries a $\Ss_{n}$-action by permuting letters, i.e. $W^{\si} = \si\inv W$. 

For a word $W\in \FS(n)$ and $u \in \langle n \rangle$, let $(i_u, u)$ be the first occurrence of $u$ in $W$. Then we obtain a total order $u \downarrow v \iff i_u \leq i_v$ on $\lh n \rh$.

The operadic composition on $\FS$ is based upon merging of words, as follows. For words $W\in \FS(m), W' \in \FS(n)$ and $1 \leq i \leq m$, we denote by $Ext(W,W',i) \subseteq \FS(m+n-1)$ the set of extensions of $W$ by $W'$ at $i$ (that is, $X \in Ext(W,W',i)$ if up to relabelling and deleting repetitions, $W'$ is a subword of $X$ and upon collapsing the letters from $W$ to $i$, relabelling and deleting repetitions, we recover $W$). 

In order to define the composition, we need the sign of an extension. 
\medskip

\noindent \emph{Sign of Extension.}
Let $W\in \FS(m)$ and let $int(W)$ be the set of interposed elements of $\langle m \rangle$ ordered by their first occurrence in $W$. For $X \in Ext(W,W',i)$ the relabelling gives rise to two maps $\al:int(W')\longrightarrow int(X)$ and $\ga: int(W) \longrightarrow int(X)$ where $\ga := \be\inv$ except if $i$ is interposed in $W$, then $\gamma(i) := \al(a)$ for $(1,a)$ the first letter of $W'$.
 
As $|int(W)| = deg(W)$, an extension $X$ defines a unique $(deg(W),deg(W'))-$shuffle $\chi$ and we define 
$$\sgn_{W,W',i}(X) := (-1)^{\chi}$$

Moreover, it is possible to talk about the boundary of a word, inducing a differential.

\medskip

\noindent \emph{Boundary.}
Given a word $W \in \FS(n)$ and a letter $(i,a)$ of $W$ for which $a$ is repeated in $W$, then define $\partial_{i}W\in \FS(n)$ as the word obtained by deleting the letter $(i,a)$ from $W$ (and relabelling). If $a$ is not repeated, then set $\partial_{i}W= 0$.

\medskip

\noindent \emph{Sign of Deletion.}
Given a word $W \in \FS(n)$ of length $k$, then we define $\sgn_{W}:\langle k \rangle \longrightarrow \{-1,1\}$ by setting $\sgn_{W}(i)= (-1)^{k}$ if $(i,a_i)$ is the $k$-th caesura of $W$, and otherwise $\sgn_{W}(i)= (-1)^{k+1}$ if it is the last occurrence, but the previous occurrence is the $k$-th caesura of $W$.

\medskip

The $\Ss$-module $\FS$ defines a dg-operad with operadic composition given by 
$$ W \circ_{i} W' := \sum_{X \in Ext(W,W',i)} \sgn_{W,W',i}(X) X$$
and boundary given by
$$\partial W := \sum_{i \in \langle |W| \rangle} \sgn_{W}(i)\partial_{i}W$$

The following lemma, which we include for the convenience of the reader, shows how $\FS$ encodes the algebraic operations of a homotopy G-algebra.

\medskip

\noindent \emph{Notations.}
To avoid too large expressions, we leave out certain bracketings by setting as default the bracketing

$$a \circ_{i} b \circ_{j} c := (a \circ_{i} b)\circ_{j} c$$
Moreover, we compress the following
$$ a (\circ_{i_{t}}a_{t})_{t} := a \circ_{i_{1}} a_{1} \circ_{i_{2}} \hdots \circ_{i_{n}}a_{n}  $$

\medskip 

\begin{lemma}\label{GeneratorsFS}
Let $M_{2}:= 12, M_{1,0} = 1$ and $M_{1,k} := 121\ldots1(k+1)
1$ for $k\geq 1$, then $\FS$ is generated by these elements and the following holds
\begin{enumerate}
\item $\partial(M_{2}) = 0$
\item $\partial(M_{1,k}) = -(M_{2}^{(12)}\circ_{2}M_{1,k-1}) + \sum_{i=2}^{k} (-1)^{i} M_{1,k-1} \circ_{i} M_{2} + (-1)^{k+1} M_{2} \circ_{1} M_{1,k-1}$
\end{enumerate}
\end{lemma}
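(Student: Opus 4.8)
The statement has two parts: that the listed elements $M_2$, $M_{1,0}$, $M_{1,k}$ ($k \ge 1$) generate $\FS$ as an operad, and that they satisfy the two displayed boundary formulas. I would treat the boundary formulas first, since they are essentially a direct computation with the explicit combinatorial description of $\partial$ and $\circ_i$ given above, and then deduce the generation statement with the help of those formulas.

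For part (1), $M_2 = 12$ has no repeated letter, so every $\partial_i M_2 = 0$ and hence $\partial M_2 = 0$; this is immediate. For part (2) I would carefully expand $\partial M_{1,k}$ where $M_{1,k} = 1\,2\,1\,3\,1\,\cdots\,1\,(k+1)\,1$ has length $2k+1$. The word $M_{1,k}$ has exactly one repeated symbol, namely $1$, which occurs $k+1$ times; the symbols $2,\dots,k+1$ each occur once. Therefore $\partial_i M_{1,k} = 0$ unless $(i, a_i)$ is one of the occurrences of $1$, and among those, deleting the final occurrence of $1$ gives $0$ as well by the nondegeneracy/caesura bookkeeping built into $\sgn_W$ (deleting it would create $\ldots (k+1) 1 \to \ldots (k+1)$, which is fine, but the point is which deletions survive with which sign). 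Concretely, deleting the $r$-th occurrence of $1$ (for $r = 1, \dots, k$) from $M_{1,k}$ produces — after relabelling — a word of the shape that can be recognized as a term $M_{1,k-1} \circ_i M_2$ or $M_2^{(12)} \circ_2 M_{1,k-1}$ or $M_2 \circ_1 M_{1,k-1}$. So the plan is: (a) enumerate the surviving $\partial_i$, (b) compute each $\sgn_{M_{1,k}}(i)$ from the caesura rule, (c) on the other side, expand each composite $M_{1,k-1} \circ_i M_2$, $M_2^{(12)} \circ_2 M_{1,k-1}$, $M_2 \circ_1 M_{1,k-1}$ using the $Ext$ sets and the shuffle-sign $\sgn_{W,W',i}$, observing that each of these composites is in fact a \emph{single} word (the $Ext$ set is a singleton here because of the rigidity of these words), and (d) match the two lists term by term, checking that the signs $(-1)^i$, $(-1)^{k+1}$, and the leading $-1$ come out as claimed. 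The sign bookkeeping — reconciling the deletion signs $\sgn_{M_{1,k}}(i)$ on the left with the shuffle signs of the extensions on the right — is the only delicate point, and I expect it to be the main obstacle; the underlying combinatorics of which words appear is essentially forced.

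For the generation claim, I would argue by induction on the degree (equivalently, on the length of the word). A word $W \in \FS(n)$ of degree $0$ is, by the axioms (surjective, nondegenerate, no interlacing), a permutation word of length $n$, hence obtained from $M_2$'s by iterated $\circ_i$; degree $1$ with $n = 1$ is exactly $M_{1,0}$-type or its compositions, and in general one shows that any $W$ decomposes along its structure — either $W$ is ``$\circ_i$-decomposable'' (some proper subword is itself an admissible word that can be collapsed to a single letter, giving $W \in Ext(W'', W', i)$ with both $W'', W'$ of strictly smaller degree), in which case induction applies, or $W$ is ``indecomposable'', and then the no-interlacing and nondegeneracy constraints force $W$ to be (a relabelling/permutation of) one of the $M_{1,k}$. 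This is the standard argument that $\FS$ is generated by the operations of a homotopy G-algebra; since Hawkins and Gerstenhaber--Voronov establish it, I would cite \cite{hawkins} and \cite{gerstenhabervoronov} for the generation part and present only the brief indecomposability analysis, keeping the emphasis on the explicit boundary formulas which are what the rest of the paper actually uses.
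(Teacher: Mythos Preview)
Your treatment of the boundary formulas is fine and matches the paper, which simply calls this a straightforward computation; your more detailed outline (enumerate deletions of the repeated letter $1$, compute the caesura signs, and match against the singleton extensions on the right) is exactly how one carries it out.

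The generation argument, however, has a gap. You propose induction on the \emph{degree}, claiming that if $W \in Ext(W'',W',i)$ then both $W''$ and $W'$ have strictly smaller degree. This is false: since $\deg W = \deg W'' + \deg W'$ and $M_2$ has degree $0$, composing with $M_2$ does not lower the degree --- for instance $1231 = M_{1,1}\circ_2 M_2$ has degree $1 = 1+0$. Your parenthetical ``(equivalently, on the length)'' is not an equivalence: $|W| = n + \deg W$, and it is \emph{length} (or arity $n$) that strictly decreases on both nontrivial factors, not degree. So as written your induction does not terminate, and your dichotomy ``decomposable into strictly smaller degree pieces / indecomposable'' is not exhaustive.

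The paper takes a different and cleaner route: induction on the arity $n$. After permuting so that the first letter of $W$ is $1$, the no-interlacing condition forces $W = 1\,W_1\,1\,W_2\,1\cdots 1\,W_k\,1\,W_{k+1}$ with the subwords $W_i$ having pairwise disjoint letter-sets (and $W_{k+1}$ possibly empty). This exhibits $W$ directly as the composite $M_{1,k}\circ_{k+1} W'_k \circ_k \cdots \circ_2 W'_1$ (prefixed by an $M_2\circ_2 W'_{k+1}$ if $W_{k+1}\neq\emptyset$), where each $W'_i$ has strictly smaller arity. No indecomposability analysis is needed. If you want to salvage your approach, switch the induction variable to length and then the decomposable/indecomposable dichotomy becomes sound --- but you would still owe the reader the verification that the only length-indecomposable words are permutations of the $M_{1,k}$, which is essentially the same ``first letter'' analysis the paper does anyway.
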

\begin{proof}
It is a straightforward computation to determine that $M_{2}$ and $M_{1,k}$ satisfy these relations.

Let $W\in \FS(n)$, we then show that it lies in the suboperad generated by $M_{2},(M_{1,k})_{k\geq1}$ using only the above relations. We prove this by induction on $n$. If $n=1$, then $W={1}$. So assume $n>1$ and apply a permutation such that the first letter of $W$ is $1$, then $W$ is of the form
$$W = {1}W_{1}{1} \ldots {1} W_{k} {1} W_{k+1}$$
where $W_{i}$ is the image of a non-empty word $W_{i}'\in \FS(n_{i})$ under the map $\ga_{i} : \lh n_{i} \rh \longrightarrow \lh n \rh$, except $W_{k+1}$ which is possibly empty. Due to no interlacing we also know that the images $\Image(\ga_{i})$ are pair-wise disjoint. Hence, we can apply a permutation to assume that $\max \Image(\ga_{i}) < \min \Image(\ga_{i'})$ holds for every $i < i'$. In this case, we have that 
$$W = M_{1,k} \circ_{k+1} W'_{k} \circ_{k} \ldots  \circ_{2} W_{1}'$$
if $W_{k+1} = \emptyset$, and
$$W = (M_{2} \circ_{2} W'_{k+1}) \circ_{1} ( M_{1,k} \circ_{k+1} W'_{k} \circ_{k} \ldots \circ_{2} W'_{1})$$
otherwise. By induction, this shows that $W$ is generated by $M_{2}$ and $(M_{1,k})_{l\geq0}$.
\end{proof}

\subsection{The operads $\NSOp$ and $\mNSOp$}\label{parNSOp}

It is well-known that non-symmetric operads can be encoded using a colored operad $\NSOp$ which can be defined using indexed trees, that is, for $\nth{q} \in \N$ and $q'= 1+ \sum_{i=1}^{n} (q_{i}-1)$,  $\NSOp(\nth{q};q')$ is the set of pairs $(T,I)$ where $T \in \Tree(n)$ and $I:E_{T}\longrightarrow \N$ a function such that
\begin{itemize}
\item for $(u,v)\in E_{T}$, $1\leq I(u,v) \leq q_{u}$
\item $(t,u),(t,v)\in E_{T}$ and $u \tre_{T}v \Longrightarrow I(t,u) < I(t,v)$
\end{itemize} 
We will often write $I$ to denote the indexed tree $(T,I)$. Moreover, $\NSOp$ is generated by those trees with a single edge, that is,
$$ E_i:= \quad \tikzfig{E_i} \quad \in \NSOp(q_1,q_2;q_1+q_2-1)$$
for every $q_1,q_2$ and $1 \leq i \leq q_1$,
with the following pair of relations
\begin{enumerate}[label=(\Roman*)]
\item \quad $\tikzfig{E_i}\quad  \circ_2 \quad \tikzfig{E_j} \quad = \quad \tikzfig{3Tree_1} \quad =  \quad \tikzfig{E_ij} \quad \circ_1 \quad \tikzfig{E_i} \quad $ for $1 \leq j \leq q_2$ and $1 \leq i \leq q_1$,\medskip \label{NSOpRel_I}
\item \quad $\tikzfig{E_i}\quad  \circ_1 \quad \tikzfig{E_k} \quad = \quad \tikzfig{3Tree_2} \quad = \left( \quad \tikzfig{E_kq} \quad \circ_1 \quad \tikzfig{E_i}  \quad \right)^{(23)}$ for $1\leq i<k \leq q_1$.\label{NSOpRel_II}
\end{enumerate}
Note that these are the well-known associativity relations for non-symmetric operads.
\begin{mydef}
Let $\mNSOp$ be the $\N$-colored operad generated by $\NSOp$ and an element $m\in \mNSOp(;2)$ satisfying the relation 
$$  \tikzfig{mNSOpRel_1} \circ_{1} m \circ_{1} m = \tikzfig{mNSOpRel_2} \circ_{1} m \circ_{1} m$$
\end{mydef}
\begin{opm}
We often write $\tikzfig{plug_example}$ where we have already filled in the plugged in $m$'s.
\end{opm}

More explicitly, every representative of an element $X\in \mNSOp(\nth{q};q)$ is of the form 
$I \circ_{i_{1}} m \circ_{i_{2}} \ldots \circ_{i_{k}} m$
 for $I \in \NSOp$ and appropriate $i_{1},\ldots,i_{k} \in \N$. Due to equivariance, we can always consider a representative of $X$ of the form
 $$I \circ_{n+1} m \circ_{n+1} \ldots \circ_{n+1} m$$
 for $I \in \NSOp(\nth{q},2,\ldots,2;q)$.

\begin{lemma}\label{partialorders}
Let $X = [ I \circ_{n+1} m \circ_{n+1} \ldots \circ_{n+1} m]  \in \mNSOp(\nth{q};q)$, the partial orders $<_{I}$ and $\tre_{I}$ on $\lh n \rh$ are independent of the representative of $X$. We denote them by $<_{X}$ and $\tre_{X}$.
\end{lemma}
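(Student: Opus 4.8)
The plan is to isolate the sole source of ambiguity in a representative of $X$ — associativity of $m$ — and to verify by inspection that applying it changes neither order once the $m$-vertices are forgotten.

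I would first fix coordinates. Reading off colours, a special representative $I\circ_{n+1}m\circ_{n+1}\cdots\circ_{n+1}m$ necessarily uses $k=q-1-\sum_{i=1}^{n}(q_{i}-1)$ copies of $m$, so any two of them are indexed trees $I,I'\in\NSOp(\nth q,2,\dots,2;q)$, both on the vertex set $\langle n+k\rangle$, in which $\langle n\rangle$ labels the ``real'' vertices and $\{n+1,\dots,n+k\}$ the $m$-vertices. Since $\NSOp$ is presented by the $E_{i}$ and the associativity relations \ref{NSOpRel_I}--\ref{NSOpRel_II}, which are precisely the coherence of indexed-tree composition, any rewrite internal to $\NSOp$ leaves the underlying planar indexed tree literally unchanged. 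Hence the equivalence relating two representatives of $X$ is generated by $\Ss$-equivariance (which relabels compatibly and, since both representatives present the \emph{same} $X$, contributes nothing to a comparison carried out on the fixed input set $\langle n\rangle$) together with local applications of the $m$-associativity relation to a pair of $m$-vertices one of which is a child of the other. It therefore suffices to show that one such elementary move preserves the restrictions of $<$ and of $\tre$ to $\langle n\rangle$.

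I would then unpack the elementary move. It selects inside $I$ a parent $m$-vertex $a$ with an $m$-child $b$ and replaces the pair by its re-bracketing $(a',b')$; in both configurations there are exactly three ``outer'' slots — the two slots of the lower $m$-vertex together with the remaining slot of the upper one — occurring in the same left-to-right order, and the move reattaches to them, unchanged and in the same horizontal order, the three (arbitrary) subtrees that hung there, without altering the edge by which the whole configuration is plugged into the rest of $I$. In particular the move touches no real vertex: every $i\in\langle n\rangle$ is, relative to the configuration $\{a,b\}$, either a strict ancestor of $a$, or an interior vertex of one of the three outer subtrees, or disjoint from it, and in each case neither its set of real descendants nor its position among the other real vertices is affected. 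Concretely, a directed path in $I$ between two real vertices either misses $\{a,b\}$ entirely or enters the configuration at its top and leaves it at a fixed outer slot, and the move changes only the portion of the path strictly between these two points; hence $i<_{I}j\iff i<_{I'}j$ for $i,j\in\langle n\rangle$. Likewise the horizontal position of a real vertex is determined by recording, for it and each of its ancestors, which slot of the parent it descends into, and the move preserves every slot of a real vertex as well as the three outer slots and their order; hence $i\tre_{I}j\iff i\tre_{I'}j$.

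Chaining this along the generating moves gives that the restrictions of $<_{I}$ and $<_{I'}$ to $\langle n\rangle$ coincide, and likewise for $\tre_{I}$ and $\tre_{I'}$, which is the claim, and we may then write $<_{X}$, $\tre_{X}$ for the common values. The one point I expect to require care is the bookkeeping in the second paragraph — that re-associating clusters of $m$-vertices really is the only remaining freedom; a clean way to package this is to contract each maximal connected cluster of $m$-vertices, obtaining a tree on $\{\text{real vertices}\}\sqcup\{\text{clusters}\}$, to observe that this contracted datum is invariant under every generating relation of $\mNSOp$ (the $m$-associativity move only re-brackets the interior of a single cluster, leaving its free slots and attached non-$m$-subtrees, in their horizontal order, untouched), and to read $<_{X}$ and $\tre_{X}$ off it. Granting that, the remaining verifications are the routine planar-tree inspections indicated above.
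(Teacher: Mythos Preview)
Your argument is correct and, at its core, coincides with the paper's: both reduce the claim to checking that a single associativity move on a pair of adjacent $m$-vertices leaves the restrictions of $<$ and $\tre$ to $\langle n\rangle$ unchanged. The paper packages this as an induction on the number $k$ of $m$'s, writing $X=X_{0,i}\circ_{n+1}m\circ_{n+1}m$ with $X_{0,i}=X_0\circ_{n+1}E_i$ and invoking a composition formula ($a<_{X\circ_i X'}b\iff \beta a<_X\beta b$ for $a,b\notin\Image\alpha$) to conclude $<_{X_{0,1}}|_{\langle n\rangle}=<_{X_0}|_{\langle n\rangle}=<_{X_{0,2}}|_{\langle n\rangle}$; you instead verify the same invariance directly by inspecting how directed paths and horizontal positions in the planar tree behave under the local re-bracketing. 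Your final suggestion---contracting each maximal connected cluster of $m$-vertices to obtain a tree from which $<_X$ and $\tre_X$ can be read off invariantly---is a clean alternative framing not in the paper, and arguably the most transparent way to see the statement. Either route is fine; the paper's is terser, yours more explicit about why the generating moves exhaust the ambiguity.
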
 
\begin{proof}
We proceed by induction on $k$ the number of $m$'s in $X$. For $k =0$ or $k=1$, there is nothing to show, so assume $k >1$. It is clear that if the lemma holds for $X$, then the relations that hold for $<$ and $\tre$ for trees, also hold for $<_{X}$ and $\tre_{X}$. In particular, if the lemma holds for $X$ and $X'$ and $(\al,\be)$ is the extension of $n$ by $m$ at $i$, then for $a,b \notin \Image(\al)$ we have 
$$ a <_{X\circ_{i} X'} b \iff \be a <_{X} \be b \text{ and } a \tre_{X \circ_{i} X'} b \iff \be a \tre_{X} \be b$$

Now, let $X_{0,i} := X_{0} \circ_{n+1} \; \tikzfig{E_i} $ such that $X = X_{0,1} \circ_{n+1} m \circ_{n+1} m =  X_{0,2} \circ_{n+1} m \circ_{n+1} m$, then we have by induction that the lemma holds for $X_{0}$. Moreover, we have for $a,b\in \lh n \rh$ that
$$ a <_{X_{0,1}} b \iff a <_{X_{0}} b \iff a <_{X_{0,2}} b \text{ and } a \tre_{X_{0,1}} b \iff a \tre_{X_{0}} b \iff a \tre_{X_{0,2}} b$$
which proves the lemma for $X$.
\end{proof}

\subsection{The morphisms $\mmNSOp \longrightarrow \Multitr$}\label{mNSOPMulti}\label{parmapmulti}

Let $\C$ be a small category. We denote by $\mathrm{Multi} \C$ the $\mathrm{Ob}(\C)$-colored operad for which $\mathrm{Multi} \C(c_1, \dots, c_n; c)$ is freely generated as a $k$-module by $n$-tuples $(\nth{\zeta})$ of $\C$-morphisms with $\zeta_i: c_i \longrightarrow c$, $\Ss_{n}$ acts by permutating labels, and composition is defined in the obvious way.

Let $\Delta$ be the simplex category. Next, we construct a morphism of operads
$$ \NSOp \longrightarrow \Multitr$$
by associating to every indexed tree $I$ in $\NSOp(\nth{q};q)$ a $n$-tuple $\zeta_I$ in $\Multitr(\nth{q};q)$ which assigns to each vertex $a$, considered as an $q_a$-corolla, a numbering denoting where its inputs are amongst the inputs of the indexed tree as a whole. 

It suffices to define the morphism on the generators $E_i\in \NSOp$ and show that it respects the relations.

\begin{constr}\label{NSOpDelta}
Let $E_i\in \NSOp(q_1,q_2;q_1 +q_2 -1)$ for $ 1 \leq i \leq q_1$, then we define 
$$\zeta_{E_i,1}(t) := \begin{cases} t & t< i \\ t+q_2 -1 & t\geq i \end{cases} \quad \text{ and } \quad \zeta_{E_i,2}(t) := t + i-1$$
Then, $\zeta_{E_i}\in \Multitr(q_1,q_2;q_1+q_2-1)$, that is, it is a tuple of non-decreasing maps. Moreover, if $q_2 > 0$, then these are strictly increasing.
\end{constr}

We will employ it as in the following example.

\begin{vb}
Let $\A$ be a $k$-linear category, then its Hochschild complex is defined as 
$$\textbf{C}^n(\A) = \prod_{A_0,\ldots,A_n \in \A} \Hom_k\left(\A(A_1,A_0) \otimes \ldots \otimes \A(A_n, A_{n-1}), \A(A_n,A_0)\right)$$
For a Hochschild cochain $\phi \in \textbf{C}^n(\A)$ and a $n$-simplex $A_0 \overset{a_1}{\leftarrow} A_1 \overset{a_2}{\leftarrow} \ldots \overset{a_{n-1}}{\leftarrow}A_{n-1} \overset{a_n}{\leftarrow} A_n$ in $\A$, we have that $\phi^{A_0,\ldots,A_n}(\nth{a}) \in \A(A_n,A_0)$.

Let $\phi_1 \in \textbf{C}^{q_1}(\A)$ and $\phi_2 \in \textbf{C}^{q_2}(\A)$, then each $E_i \in \NSOp(q_1,q_2;q_1+q_2-1)$ determines a cochain $\phi_1 \circ_i \phi_2 \in \textbf{C}^{q_1+q_2-1}(\A)$ as follows
$$(\phi_1 \circ_i \phi_2)^{A_0,\ldots,A_{q_1+q_2-1}} = \phi_1^{A_{\zeta_{E_i,1}(0)},\ldots,A_{\zeta_{E_i,1}(q_1)}} \circ_i \phi_2^{A_{\zeta_{E_i,2}(0)}, \ldots, A_{\zeta_{E_i,2}(q_2)}}$$
which we can visualize using $n$-corollas
$$\scalebox{0.9}{$\tikzfig{NSOpMultiD}$}$$
\end{vb}

\begin{lemma}\label{multitr}
Construction \ref{NSOpDelta} extends to a morphism of operads 
$$\NSOp \longrightarrow \Multitr: (T,I) \longmapsto \zeta_I$$
\end{lemma}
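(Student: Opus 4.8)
The plan is to verify that the assignment $E_i \mapsto \zeta_{E_i}$ from Construction \ref{NSOpDelta} respects the two defining relations \ref{NSOpRel_I} and \ref{NSOpRel_II} of $\NSOp$, so that by the universal property of a presentation by generators and relations it extends to a morphism of colored operads $\NSOp \longrightarrow \Multitr$. Since $\Multitr$ is freely generated as a $k$-module by tuples of $\Delta$-morphisms and composition there is "the obvious one", checking a relation amounts to composing the relevant non-decreasing maps $\langle q_i\rangle \to \langle q\rangle$ in $\Delta$ and seeing that both sides of the relation give literally the same tuple. First I would fix the conventions: for an indexed tree $(T,I)$ the associated tuple $\zeta_I = (\zeta_{I,a})_{a \in V_T}$ has $\zeta_{I,a}\colon \langle q_a\rangle \to \langle q\rangle$ (equivalently, a map $[q_a] \to [q]$ of totally ordered sets in $\Delta$) recording the positions of vertex $a$'s inputs among the global inputs; the target color $q'=1+\sum_i(q_i-1)$ is exactly the number of leaves of the indexed tree, so the codomain is consistent. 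Under operadic composition $I \circ_j I'$ in $\NSOp$, the tuple should transform by $\zeta_{I\circ_j I'}$ restricted to the vertices coming from $I'$ being $\zeta_{I,j} \circ \zeta_{I',(-)}$ and to the vertices of $I$ other than $j$ being $\zeta_{I,(-)}$ reindexed along the leaf-insertion; one should record this as a lemma-internal compatibility that the two relations then only need to be checked on the generators.

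The core computation splits into the two relations. For relation \ref{NSOpRel_I}, $E_i \circ_2 E_j$ versus $E_{ij} \circ_1 E_i$ (with $E_{ij}$ denoting $E_{i+j-1}$ in the appropriate arity, matching the paper's $\tikzfig{E_ij}$ notation): one computes the three maps attached to the three vertices of the resulting $3$-vertex tree from each side using the formulas $\zeta_{E_i,1}(t) = t$ for $t<i$, $t+q_2-1$ for $t\ge i$, and $\zeta_{E_i,2}(t)=t+i-1$. Composing $\zeta_{E_i,2}$ (position of the middle corolla inside the top one's inputs) with $\zeta_{E_j,1}$ and $\zeta_{E_j,2}$ gives the placements coming from the $\circ_2$ side; doing the analogous composite via $E_{ij}$ and $E_i$ gives the $\circ_1$ side; a short index chase shows they coincide. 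For relation \ref{NSOpRel_II}, $E_i \circ_1 E_k$ versus $(E_{k-q_2+1}\circ_1 E_i)^{(23)}$ with $i<k$: here the subtlety is the transposition $(23)$ of the two bottom corollas, which in $\Multitr$ acts by permuting the two corresponding entries of the tuple, and also the shift $k \mapsto k - q_2 + 1$ (or whatever the exact arity bookkeeping produces — the paper writes $E_{kq}$) on the label. One checks that after applying the transposition the two tuples of non-decreasing maps agree. In both cases it is worth remarking that the "strictly increasing when $q_2>0$" clause is preserved, since $\Multitr$ as used later wants this.

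The main obstacle I expect is purely bookkeeping: getting the index shifts exactly right in relation \ref{NSOpRel_II}, where the relabelling of the generator ($E_i \circ_1 E_k$ turning into a composite with a lower index after the other insertion) interacts with the arity-dependent offset $q_2-1$ built into $\zeta_{E_i,1}$, and making sure the $(23)$-twist is applied to the tuple on the correct side. There is no conceptual difficulty — $\Delta$-morphisms compose strictly associatively and the claim is an equality of finite non-decreasing functions — so once the conventions for how $\Ss_n$ and $\circ_j$ act on $\Multitr$ are pinned down (ideally by stating the transformation rule for $\zeta$ under composition as the first step), both relations reduce to one-line verifications. I would close by noting that well-definedness of the map on a general indexed tree $(T,I)$, and its compatibility with all operadic compositions and symmetric-group actions, then follows formally from the fact that these two relations generate all relations in $\NSOp$ (the associativity relations for non-symmetric operads, as remarked after \ref{NSOpRel_II}).
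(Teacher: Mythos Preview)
Your proposal is correct and follows essentially the same approach as the paper: verify that the assignment on generators respects the two associativity relations \ref{NSOpRel_I} and \ref{NSOpRel_II}, whence the universal property yields the morphism. The paper is in fact more terse than you are --- it only writes out the computation for relation \ref{NSOpRel_I} (obtaining the three maps $\zeta_1,\zeta_2,\zeta_3$ from each side and observing they agree) and declares relation \ref{NSOpRel_II} to be an analogous simple computation, whereas you sketch both and correctly flag the $(23)$-twist in \ref{NSOpRel_II} as the only place requiring care.
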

\begin{proof}
It suffices to verify the relations \ref{NSOpRel_I} and \ref{NSOpRel_II} of $\NSOp$. These are two simple computations and thus we only verify the first relation \ref{NSOpRel_I} as an example. Let $\zeta := \zeta_{E_i} \circ_2 \zeta_{E_j}$ denote the left-hand side, then we compute
\begin{alignat*}{2}
\zeta_1(t) &= \zeta_{E_i,1}(t) &&= \begin{cases} t & t<i \\ t+ q_2 +q_3 - 2 & t\geq i \end{cases} \\
\zeta_2(t) &= \zeta_{E_i,2} \circ \zeta_{E_j,1}(t) &&= \begin{cases} t+ i-1 & t<j \\ t+ i-1 + q_3 -1 & t\geq j \end{cases}  \\
\zeta_3(t) &= \zeta_{E_i,2} \circ \zeta_{E_j,2}(t) &&= t + i -1 + j-1
\end{alignat*}
On the other hand, we compute the right-hand side $\zeta' := \zeta_{E_{i-1+j}} \circ_1 \zeta_{E_i}$ and obtain
\begin{alignat*}{2}
\zeta'_1(t) &= \zeta_{E_{i-1+j},1} \circ \zeta_{E_i,1}(t)&&= \begin{cases} t & t<i \\ t+ q_2-1 +q_3 - 1 & t\geq i \end{cases} \\
\zeta'_2(t) &= \zeta_{E_{i-1+j},1} \circ \zeta_{E_i,2}(t) &&= \begin{cases} t+ i-1 & t<j \\ t+ i-1 + q_3 -1 & t\geq j \end{cases}  \\
\zeta'_3(t) &= \zeta_{E_{i-1+j},1}(t) &&= t + i -1 + j-1
\end{alignat*}
\end{proof}
\begin{opm}
In appendix \ref{appgenfree} we have added a generator-free description of this morphism and an alternative proof of lemma \ref{multitr}, which we consider insightful and valuable, especially for concrete computations of signs in later sections.
\end{opm}

\begin{vb}
Consider the indexed tree $$I = \quad \tikzfig{multiD_example} \quad \in \NSOp(3,0,4,1,0; 4)$$
then we compute $\zeta_{I}$ and obtain
$$\begin{tikzpicture}[baseline = (base), scale = 0.35]
\node (base) at (1,-0.5) {};
\node (T) at (1.3,1) {$\zeta_{I,1}$};
\node (A) at (0,0) {$0$};
\node (B) at (0,-1) {$1$};
\node (C) at (0,-2) {$2$};
\node (D) at (0,-3) {$3$};

\node (A') at (3,0) {$0$};
\node (B') at (3,-1) {$1$};
\node (C') at (3,-2) {$2$};
\node (D') at (3,-3) {$3$};
\node (E') at (3,-4) {$4$};

\draw[->] (A) edge (A') (B) edge (A') (C) edge (D')(D) edge (E');
\end{tikzpicture}\quad \begin{tikzpicture}[baseline = (base), scale = 0.35]
\node (base) at (1,-0.5) {};
\node (T) at (1.3,1) {$\zeta_{I,2}$};
\node (A) at (0,0) {$0$};

\node (A') at (3,0) {$0$};
\node (B') at (3,-1) {$1$};
\node (C') at (3,-2) {$2$};
\node (D') at (3,-3) {$3$};
\node (E') at (3,-4) {$4$};

\draw[->] (A) edge (A') ;
\end{tikzpicture} \quad \begin{tikzpicture}[baseline = (base), scale = 0.35]
\node (base) at (1,-0.5) {};
\node (T) at (1.3,1) {$\zeta_{I,3}$};
\node (A) at (0,0) {$0$};
\node (B) at (0,-1) {$1$};
\node (C) at (0,-2) {$2$};
\node (D) at (0,-3) {$3$};
\node (E) at (0,-4) {$4$};

\node (A') at (3,0) {$0$};
\node (B') at (3,-1) {$1$};
\node (C') at (3,-2) {$2$};
\node (D') at (3,-3) {$3$};
\node (E') at (3,-4) {$4$};

\draw[->] (A) edge (A') (B) edge (B') (C) edge (C')(D) edge (C') (E) edge (D');
\end{tikzpicture}\quad  \begin{tikzpicture}[baseline = (base), scale = 0.35]
\node (base) at (1,-0.5) {};
\node (T) at (1.3,1) {$\zeta_{I,4}$};
\node (A) at (0,0) {$0$};
\node (B) at (0,-1) {$1$};

\node (A') at (3,0) {$0$};
\node (B') at (3,-1) {$1$};
\node (C') at (3,-2) {$2$};
\node (D') at (3,-3) {$3$};
\node (E') at (3,-4) {$4$};

\draw[->] (A) edge (A') (B) edge (B');
\end{tikzpicture}\quad \begin{tikzpicture}[baseline = (base), scale = 0.35]
\node (base) at (1,-0.5) {};
\node (T) at (1.3,1) {$\zeta_{I,5}$};
\node (A) at (0,0) {$0$};

\node (A') at (3,0) {$0$};
\node (B') at (3,-1) {$1$};
\node (C') at (3,-2) {$2$};
\node (D') at (3,-3) {$3$};
\node (E') at (3,-4) {$4$};

\draw[->] (A) edge (C') ;
\end{tikzpicture} $$ 
\end{vb}

It is also possible to associate to an element of $\mNSOp$ an element of $\Multitr$.

\begin{lemma}
Let $X = [ I \circ_{n+1} m \circ_{n+1} \ldots \circ_{n+1} m]  \in \mNSOp(\nth{q};q)$, then $\zeta_{I,t}:[q_{t}]\longrightarrow [q]$ for $t\in \lh n \rh$ is independent of the representative $I$ of $X$.

In this case, we write $\zeta_{X}$.
\end{lemma}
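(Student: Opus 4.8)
The plan is to promote the morphism of operads $\zeta\colon\NSOp\longrightarrow\Multitr$ of Lemma~\ref{multitr} to a morphism of $\N$-colored operads $\bar\zeta\colon\mNSOp\longrightarrow\Multitr$, and then to read off the statement from the value $\bar\zeta(X)$. This is parallel to Lemma~\ref{partialorders}, which gives representative-independence of the partial orders $<_X$ and $\tre_X$; here we upgrade this to the full tuple $\zeta_X$.

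First I would define $\bar\zeta$ on generators by $\bar\zeta|_{\NSOp}=\zeta$ and $\bar\zeta(m):=\ast$, where $\ast$ denotes the empty tuple, i.e.\ the unique basis element of the free rank-one $k$-module $\Multitr(\,;2)$. To see that $\bar\zeta$ is well defined it suffices to check that it respects the single defining relation of $\mNSOp$. Both sides of that relation lie in $\Multitr(\,;3)$, which is again free of rank one; since $\zeta$ sends the single-edge generators $E_i$ to basis elements of $\Multitr$ and operadic composition in $\Multitr$ multiplies coefficients (and composition with $\ast$ introduces no coefficient), both sides are sent to the basis element of $\Multitr(\,;3)$ and hence agree. (Equivalently, one may verify the relation by the same kind of short computation as in the proof of Lemma~\ref{multitr}.) Thus $\bar\zeta$ is a morphism of operads.

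Next, for $X=[\,I\circ_{n+1}m\circ_{n+1}\dots\circ_{n+1}m\,]$ with $I\in\NSOp(\nth q,2,\dots,2;q)$, functoriality of $\bar\zeta$ gives
$$\bar\zeta(X)=\zeta_I\circ_{n+1}\ast\circ_{n+1}\dots\circ_{n+1}\ast$$
where in $\Multitr$ composing a tuple with the empty tuple at a given slot simply deletes that slot and relabels the remaining ones. Carrying this out $k$ times at slot $n+1$ removes exactly the last $k$ entries of $\zeta_I$, whence
$$\bar\zeta(X)=(\zeta_{I,1},\dots,\zeta_{I,n})\in\Multitr(\nth q;q).$$
Since the left-hand side depends only on $X$, so does this tuple; in particular each map $\zeta_{I,t}\colon[q_t]\longrightarrow[q]$ for $t\in\langle n\rangle$ is independent of the chosen representative $I$, which is the claim, and we set $\zeta_X:=(\zeta_{I,1},\dots,\zeta_{I,n})$.

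Alternatively, staying closer to the proof of Lemma~\ref{partialorders}, one can induct on the number $k$ of copies of $m$ in $X$: for $k\le 1$ there is nothing to prove, and for $k>1$ any two representatives can, by equivariance, be brought to the forms $X_{0,1}\circ_{n+1}m\circ_{n+1}m$ and $X_{0,2}\circ_{n+1}m\circ_{n+1}m$ with a common reduct $X_0$, to which the inductive hypothesis applies. In either approach the only point requiring care is the elementary description of how $\zeta$ transforms under composition with the arity-zero generator $m$, namely that plugging the empty tuple into a slot of a $\Multitr$-element erases that entry without disturbing the others; no signs are involved.
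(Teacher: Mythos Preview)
Your first approach is correct and is in fact slightly cleaner than the paper's. The paper argues by induction on the number $k$ of copies of $m$: for $k>1$ it writes $X=X_{0,1}\circ_{n+1}m\circ_{n+1}m=X_{0,2}\circ_{n+1}m\circ_{n+1}m$ with $X_{0,i}=X_0\circ_{n+1}E_i$, invokes the induction hypothesis on $X_0$, and uses Lemma~\ref{multitr} to see that $\zeta_{X_{0,i},t}=\zeta_{X_0,t}$ for $t\le n$. In other words, the paper verifies invariance under a single application of the $\mNSOp$ relation and then (implicitly) chains these. You instead go straight to the morphism extension $\bar\zeta\colon\mNSOp\to\Multitr$: since $\Multitr(\,;3)$ is free of rank one, the defining relation is automatically satisfied, and representative-independence of $(\zeta_{I,1},\dots,\zeta_{I,n})$ drops out of $\bar\zeta(X)$. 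This reverses the logical order of the paper, which first proves the present lemma and only afterwards records the extension as Proposition~\ref{propmapmulti}; your argument shows that one can prove the proposition directly and obtain the lemma as an immediate corollary. Your alternative inductive sketch is the paper's proof, though the phrase ``with a common reduct $X_0$'' glosses over the point that one is really checking invariance under a single instance of the relation and then iterating.
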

\begin{proof}
We prove the lemma by induction  on $k$ the number of occurrences of $m$. The cases $k=0$ and $k=1$ are trivial, so assume $k>1$. Let $X_{0,i} := X_{0} \circ_{n+1} \; \tikzfig{E_i}$ such that $X = X_{0,1} \circ_{n+1}m \circ_{n+1} m = X_{0,2} \circ_{n+1}m \circ_{n+1} m$, then by induction and lemma \ref{multitr} we have for $t\in \lh n \rh$ that 
$$  \zeta_{X_{0,1},t} = \zeta_{X_{0},t} = \zeta_{X_{0,2},t}$$
which proves the lemma.
\end{proof}

\begin{prop}\label{propzeta}\label{propmapmulti}
We have morphisms of operads
$$\NSOp \longrightarrow \Multitr : I \longmapsto \zeta_{I}$$
and its extension
$$\mNSOp \longrightarrow \Multitr: X \longmapsto \zeta_{X}.$$
\end{prop}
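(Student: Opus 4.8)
The plan is to build the extension $\mNSOp \longrightarrow \Multitr$ from the already-established morphism $\NSOp \longrightarrow \Multitr$ of Lemma~\ref{multitr} by checking that it is compatible with the extra generator $m$ and the associativity relation that $m$ satisfies. Since $\mNSOp$ is generated by $\NSOp$ together with $m \in \mNSOp(;2)$ subject to a single relation, it suffices to (a) specify the image of $m$, (b) verify that this assignment respects the defining relation of $m$, and (c) verify that the resulting map is well-defined on all of $\mNSOp$, i.e., that it agrees on representatives — but this last point has effectively already been done by the preceding lemma, which shows $\zeta_X$ is independent of the chosen representative $I \circ_{n+1} m \circ_{n+1} \cdots \circ_{n+1} m$.

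First I would set the image of $m \in \mNSOp(;2)$. Since $m$ has no inputs ($n = 0$) and output color $2$, its image must lie in $\Multitr(;2)$, i.e.\ it is the empty tuple with target object $[2]$; there is no data to specify, so $\zeta_m$ is simply the unique element of $\Multitr(;2)$. Then the morphism on a general element $X = [I \circ_{n+1} m \circ_{n+1} \cdots \circ_{n+1} m]$ is forced to be $\zeta_X = \zeta_I \circ_{n+1} \zeta_m \circ_{n+1} \cdots \circ_{n+1} \zeta_m$, which by the composition rule in $\Multitr$ is just the tuple $(\zeta_{I,1}, \ldots, \zeta_{I,n})$ restricted to the first $n$ components (the $m$-slots contribute nothing since $\zeta_m$ is empty). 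This matches the description $\zeta_X = (\zeta_{X,t})_{t \in \langle n \rangle}$ with $\zeta_{X,t}\colon [q_t] \longrightarrow [q]$ used in the lemma preceding the proposition.

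Next I would verify the relation. The defining relation of $\mNSOp$ is an equality of two elements of $\mNSOp(;q)$, each of the form (a suitable indexed tree built from $E_i$'s) $\circ_1 m \circ_1 m$; concretely it is the associativity/pentagon-type identity $\tikzfig{mNSOpRel_1} \circ_{1} m \circ_{1} m = \tikzfig{mNSOpRel_2} \circ_{1} m \circ_{1} m$. Applying the proposed morphism to both sides and using functoriality of $\NSOp \longrightarrow \Multitr$ on the tree parts together with the fact that each $m$-factor maps to the empty tuple, both sides reduce to the image under $\NSOp \longrightarrow \Multitr$ of the common underlying indexed tree, hence are equal. Concretely this comes down to the two numbering maps $[q] \longrightarrow [q]$ attached to the single vertex on each side agreeing, which is the identity in both cases — a one-line check. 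Alternatively, and perhaps more cleanly, one invokes the previous lemma: $\zeta_X$ depends only on $X$ and not on its presentation, so the two sides of the relation, being equal in $\mNSOp$, automatically have the same image.

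The one genuine thing to be careful about — and which I expect to be the only real content — is making sure that the assignment $X \mapsto \zeta_X$ is a morphism of operads, i.e.\ compatible with \emph{all} operadic compositions in $\mNSOp$, not just the ones of the form $-\circ_{n+1} m$. This follows by writing an arbitrary composition $X \circ_i X'$ in terms of representatives, moving all $m$-factors to the rightmost input slots via equivariance (as in the discussion after Definition of $\mNSOp$), reducing to a composition of indexed trees to which Lemma~\ref{multitr} applies, and then observing that the $\Multitr$-composition is compatible with this reduction because $\zeta_m$ is the empty tuple and so contributes trivially on both sides. The $\Ss_n$-equivariance is likewise inherited from that of $\NSOp \longrightarrow \Multitr$ since permuting inputs of $X$ permutes the components of $\zeta_I$ and leaves the $m$-slots untouched. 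Putting these together gives the claimed pair of morphisms of operads.
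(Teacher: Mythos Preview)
Your proposal is correct and essentially matches the paper's approach, which in fact gives no explicit proof at all: the proposition is stated immediately after Lemma~\ref{multitr} (establishing the first morphism) and the preceding well-definedness lemma (establishing that $\zeta_X$ is independent of the chosen representative), and is meant to follow directly from these. Your write-up simply unpacks this, checking the generator $m$ and the relation explicitly.

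One small wording issue: when you say both sides of the $m$-relation ``reduce to the image of the common underlying indexed tree'', this is not quite right---the two underlying trees $E_1$ and $E_2$ are different. The correct observation (which you also make) is that both sides land in $\Multitr(;3)$, the free module on the empty $0$-tuple, so the relation is trivially satisfied. Your alternative route via the well-definedness lemma is the cleanest.
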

Moreover, this last morphism is surjective, but not an isomorphism. This is due to the existence of vertices with zero inputs which collapse information. We consider a simple example.
\begin{vb}
Consider the indexed $2$-corolla $I:=\; \tikzfig{Corolla_1}	$ and its permuted form $I^{(2,3)} =\; \tikzfig{Corolla_2}$ as elements of $\NSOp(2,0,0;0)$, then they have the same image in $\Multitr$. Note that this example holds for both $\NSOp$ and $\mNSOp$. 
\end{vb} Hence, we can consider $\mNSOp$ as a finer operad than $\Multitr$ and thus encoding more information.

\subsection{The morphism $\Bracee \longrightarrow \NSOp_s$}\label{parbraceNSOp}

In order to define the morphism $\phi: \Bracee \longrightarrow \NSOp_s$ properly we need to compile the colored operad $\NSOp$ into a graded non-colored operad 
$$\NSOp_s(n) \sub \prod_{\nth{p}} \NSOp(\nth{p};p).$$
where an element $x\in\NSOp(\nth{p};p)$ is graded as $|x|= \sum_{i=1}^{r} (p_{i}-1) - (p-1) = 0$ (this is the \emph{suspended} grading, whence the subscript) and $\NSOp_s(n)$ is the subspace generated by sequences of elements with constant grading. The composition on $\NSOp_s$ is derived from the composition of $\NSOp$ where it is set to $0$ when the colors do not match. Note in particular that the $\Ss_{n}$-action on $\NSOp_s(n)$ is affected by this grading: permuting two vertices $i$ and $j$ introduces the signs $(-1)^{(p_{i}-1)(p_{j}-1)}$. 

 \begin{mydef}
 Let $(T,I) \in \NSOp(\nth{p};p)$, then $(T,I)$ is a coloring of $T$ and we write $\Clr(T,\nth{p})$ as the set of all such colorings of $T$.
 \end{mydef}
 
 In order to define the sign $\sgn_{T}(I)$ for $T\in \Bracee(n)$, we use the morphism of operads
 $$\NSOp \longrightarrow \Multitr : (T,I) \longmapsto \zeta_{I}$$
 and base this definition on the sign $\sgn_{Q}(\zeta,I)$ from \cite[Def. 4.20]{hawkins}.
 
\begin{constr}
 We work with the following alphabet 
$$ 1_{i},\ldots,(p_{i}-1)_{i}$$
for $i=1,\ldots,n$ and define the word 
$$J^{s}(\nth{p}) = 1_{1}\ldots(p_{1}-1)_{1}\ldots1_{n}\ldots(p_{n}-1)_{n}$$
We define a second word $J^{s}_{T}(I)$ having in the $\zeta_{I,k}(i)$-th position $i_{k}$ for $ 1 \leq i \leq q_{k}-1$. Note that we start from position $1$ for $J^{s}_{T}(I)$ (instead of $0$).
\end{constr}

\begin{mydef}
For $I\in \NSOp(\nth{p};p)$ where we replace those $p_{i} = 0$ by $2$, we define $\sgn_{T}(I)$ as the sign of the shuffle transforming $J^{s}(\nth{p})$ to $J_{T}^{s}(I)$.
\end{mydef}

\begin{theorem}\label{thmbraceNSOp}
We have a morphism of operads 
$$\phi:\Bracee \longrightarrow \NSOp_s : T \longmapsto \left( \sum_{I \in \Clr(T,\nth{p})} \sgn_{T}(I)(T,I) \right)_{\nth{p}}. $$ 
\end{theorem}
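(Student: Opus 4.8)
The plan is to exhibit $\phi$ as a well-defined morphism of dg-operads by checking the three things required: that $\phi$ is $\Ss_n$-equivariant (respecting the sign coming from the suspended grading), that $\phi$ commutes with the operadic composition, and that $\phi$ is a chain map for the differentials (where $\NSOp_s$ carries the zero differential since every element has grading $0$, so this amounts to checking $\phi \circ \partial_{\Bracee} = 0$, i.e. $\phi$ kills the boundary of $\Bracee$). Since $\Bracee$ is the free operad on trees with the substitution composition and boundary, it actually suffices to verify compatibility on generators; concretely I would reduce to the corollas (one-vertex trees) together with the two-vertex trees that generate $\Bracee$ under $\circ_i$, mirroring the presentation of $\NSOp$ by the generators $E_i$ and relations \ref{NSOpRel_I}, \ref{NSOpRel_II}, exactly as Hawkins organises the analogous verification in \cite{hawkins}.

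First I would set up the bookkeeping: for a tree $T \in \Tree(n)$ and a tuple of colours $(\nth{p})$, the colorings $\Clr(T,\nth{p})$ are exactly the indexed trees $(T,I) \in \NSOp(\nth{p};p)$ with $p = 1 + \sum_i (p_i - 1)$, so the target component is supported in the single arity $p$ determined by the $p_i$ and indeed has suspended grading $0$, confirming $\phi(T) \in \NSOp_s(n)$. Then I would unwind the sign $\sgn_T(I)$ via the word model: $J^s(\nth{p})$ lists the ``internal slots'' of the vertices in vertex-order, and $J^s_T(I)$ lists the same letters reordered according to where the composite operation $\zeta_I$ places each vertex's inputs among the global inputs; the shuffle sign relating them is precisely the Koszul sign that the suspended $\Ss_n$-action on $\NSOp_s$ demands. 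Equivariance of $\phi$ then follows because permuting vertices of $T$ by $\si$ permutes the blocks of $J^s$ and simultaneously relabels $J^s_T(I)$, and the discrepancy between these two operations is exactly the sign $(-1)^{(p_i-1)(p_j-1)}$ accrued per transposition — this is a direct computation with shuffle signs, using that $\zeta_{T^\si} = \zeta_I$ after the corresponding relabelling (this uses Construction \ref{NSOpDelta} and Lemma \ref{multitr}).

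Next, for composition: given $T \in \Tree(m)$, $T' \in \Tree(n)$ and $1 \le i \le m$, I would compare $\phi(T \circ_i T') = \phi\bigl(\sum_{U \in Ext(T,T',i)} U\bigr)$ with $\phi(T) \circ_i \phi(T')$ colour-component by colour-component. Fixing output colours, the composites $(T,I) \circ_i (T',I')$ in $\NSOp$ that survive (colours matching at $i$) are indexed by the same data as the tree-extensions $U \in Ext(T,T',i)$ together with compatible colorings, because substitution of trees and the indexing of edges are governed by the same pair of maps $(\al,\be)$ as in \S\ref{parbrace}. The content of the step is then purely a sign identity: $\sgn_{T\circ_i T'}(\text{coloring of }U) = \sgn_T(\cdot)\cdot\sgn_{T'}(\cdot)\cdot(\text{Koszul sign from the suspended grading})$, which I would prove by writing all three shuffles in terms of the word $J^s$ and tracking how the maps $\al,\ga$ of \S\ref{parbrace} reorganise the letters; functoriality of $\zeta$ (Proposition \ref{propmapmulti}) makes $\zeta$ for $U$ computable from those for $T,T'$, and the appendix's generator-free description of $\zeta$ is the natural tool to keep the sign manipulation under control. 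Finally, for the differential I would check $\partial_{\Bracee}$-closedness on the generators $M$-analogues: since $\NSOp_s$ is concentrated in grading $0$ it has zero differential, so I must show $\sum_{I} \sgn_T(I)(T,I)$ assembled from $\partial T$ vanishes; the cancellations come in pairs from the associativity relations \ref{NSOpRel_I}–\ref{NSOpRel_II} of $\NSOp$, with the signs matching because $\sgn_T$ was defined through exactly the morphism $\NSOp \to \Multitr$ that encodes those relations.

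The main obstacle is the sign compatibility in the composition step: one must show the shuffle sign $\sgn_{T\circ_i T'}$ for an extension $U$ factors correctly as the product of $\sgn_T$, $\sgn_{T'}$, and the suspension Koszul sign, uniformly over all $U \in Ext(T,T',i)$ and all colorings, including the degenerate case where some $p_a = 0$ (handled by the ``replace $0$ by $2$'' convention, which one must check is consistent with composition). This is the place where a careless choice of conventions breaks, which is precisely why the paper routes the definition through $\Multitr$ and invokes Hawkins' $\sgn_Q(\zeta,I)$; I would lean on \cite[Def. 4.20]{hawkins} and the appendix's generator-free reformulation to carry out this verification cleanly rather than by brute-force reindexing.
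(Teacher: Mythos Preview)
Your core strategy---check equivariance from the definition of $\sgn_T(I)$, then verify that signs multiply correctly under composition---is exactly what the paper does. However, your proposal is substantially more elaborate than necessary, and contains a couple of misconceptions worth flagging.

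First, the differential check is superfluous: in this paper $\Bracee$ carries no differential (see \S\ref{parbrace}), and the statement asserts only a morphism of operads, not dg-operads. Your worry about ``$\phi$ killing the boundary of $\Bracee$'' is misplaced---there is no boundary to kill. You may be conflating $\Bracee$ with $\FS$ (which does carry a differential); the reference to maps $\al,\ga$ ``of \S\ref{parbrace}'' reinforces this, since $\ga$ appears only in the $\FS$ discussion of \S\ref{parFS}.

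Second, the suggestion to ``reduce to generators'' (corollas and two-vertex trees) is not how the paper proceeds and is in fact awkward here: $\Bracee$ is not a free operad, and composition $T\circ_i T'$ produces a \emph{sum} over extensions, so a generators-and-relations reduction does not straightforwardly apply. The paper instead argues directly: once equivariance is established, it suffices to check $\circ_1$, and for $i=1$ the shuffle defining $\sgn_{T\circ_1 T'}(I\circ_1 I')$ factors cleanly as the concatenation of the shuffle for $\sgn_{T'}(I')$ (acting on the first block of letters) followed by the shuffle for $\sgn_T(I)$ (treating the $T'$-block as a single unit in place of vertex $1$). This two-line decomposition is the entire content; no Koszul correction term appears precisely because one has reduced to $i=1$. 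Your plan would eventually arrive at the same identity, but the detour through generators, extensions indexed by $(\al,\be)$, and appendix machinery is unnecessary overhead for what is ultimately a single shuffle factorisation.
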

\begin{proof}
Per definition of $\sgn_{T}(I)$ we see that $\phi$ is equivariant. Hence, we only need to verify that $\sgn_{T\circ_{1}T'}(I\circ_{1}I') = \sgn_{T}(I) \sgn_{T'}(I')$ for $T\in \Bracee(n), T'\in \Bracee(m)$ and $I\in \Clr(T,\nth{p})$ and $I'\in \Clr(T',\fromto{p'}{m})$. This equation holds as we can decompose the shuffle $\chi'': J^{s}(p'_{1},\ldots,p'_{m},p_{2},\ldots,p_{n}) \leadsto J_{T\circ_{i}T'}(I\circ_{i}I')$ into two shuffles 
$$ J^{s}(p'_{1},\ldots,p'_{m},p_{2},\ldots,p_{n}) \overset{\chi'}{\leadsto} J_{T'}^{s}(I')J^{s}(p_{2},\ldots,p_{m}) \overset{\chi}{\leadsto} J_{T\circ_{1}T'}^{s}(I\circ_{1}I')$$
where $\chi$ and $\chi'$ are the corresponding shuffles determining $\sgn_{T}(I)$ and $\sgn_{T'}(I')$.
\end{proof}
 
\subsection{The morphism $\FS \longrightarrow \mNSOp_{st}$}\label{parFSmNSOp}
 
 In order to define the morphism $ \bar{\phi}: \FS \longrightarrow \mNSOp_{st}$ properly, we again need to compile the colored operad $\mNSOp$ to obtain a graded non-colored operad 
$$\mNSOp_{st}(n) \sub \prod_{\nth{q},q} \mNSOp(\nth{q};q)$$
where an element $x\in\mNSOp(\nth{q};q)$ is graded as $deg(x) = \sum_{i=1}^{r} q_{i} - q$ (\emph{standard} grading) and $\mNSOp_{st}(n)$ is generated by the sequences of constant grading. The composition on $\mNSOp_{st}$ is derived from the composition of $\mNSOp$ where it is set to $0$ when the colors do not match. Note in particular that the $\Ss_{n}$-action on $\mNSOp_{st}(n)$ is affected by this grading: permuting two vertices $i$ and $j$ introduces the sign $(-1)^{q_{i}q_{j}}$. 

\subsubsection{Colorings}

\begin{mydef}\label{coloringWord}
Let $X:=[I \circ_{n+1} m \circ_{n+1} \ldots \circ_{n+1} m ] \in \mNSOp(\nth{q};q)$ for $I$ having $n+k$ vertices, then $X$ is a coloring of $W \in \FS(n)$ if 
\begin{itemize}
\item each vertex $n+1,\ldots,n+k$ has exactly two children in $I$
\item for $u,v \in \lh n \rh$ holds
\begin{enumerate}
\item $u <_{X} v \iff W = \ldots  {u} \ldots  {v} \ldots  {u}\ldots $
\item $u \tre_{X} v \iff$ every occurrence of $u$ in $W$ is left of every occurrence of $v$ in $W$.
\end{enumerate}
\end{itemize}
We write $\Clr(W,\nth{q})$ for the set of all such colorings for $W$.
\end{mydef}
\begin{opm}
An element $X\in \Clr(W,\nth{q})$ has $n - deg(W) -1$ many $m$'s plugged in. Hence, $X \in \mNSOp(\nth{q}; \sum_{i=1}^{n}(q_{i}-1) + n - deg(W))$.
\end{opm}

We give some examples.

\begin{vb}\label{exampleColoring}
The following three elements of $\mNSOp$ 
$$ \scalebox{1}{$\tikzfig{Coloring_1}$} \in \mNSOp(q_1,q_2;q_1+q_2), \qquad  \scalebox{1}{$\tikzfig{Coloring_2}$}\in \mNSOp(q_1,q_2,q_3; q_1+q_2+q_3-1) \quad \text{ for } 1 \leq i \leq q_1$$
 and 
$$\scalebox{1}{$\tikzfig{Coloring_3}$} \in \mNSOp(q_1,...,q_5; \sum_{i=1}^5 q_i -2) \quad \text{ for } 1 \leq j < k \leq q_1$$
 color respectively the words
$$ 12 \in \FS(2), \qquad 1231 \in \FS(3) \qquad \text{ and } \qquad 1213451 \in \FS(5).$$
Note however that not all elements of $\mNSOp$ color a word of $\FS$: the following set of elements
$$ \scalebox{1}{$\tikzfig{Coloring_4}$} \in \mNSOp(q_1;q_1+1)$$
for $r\in \{1,2\}$, colors no word in $\FS$ because the vertex plugged by $m$ does not have two children.
\end{vb}

\begin{lemma}
Definition \ref{coloringWord} is well-defined, that is, it is independent of the chosen representative $I$ of $X$.
\end{lemma}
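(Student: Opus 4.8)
The plan is to trace through the only place where the representative $I$ enters the two defining conditions of Definition \ref{coloringWord}, and to show that each ingredient is already known to be representative-independent by the lemmas preceding the statement. First I would observe that the condition ``each vertex $n+1,\ldots,n+k$ has exactly two children in $I$'' refers only to the plugged-in $m$'s; since every $m \in \mNSOp(;2)$ contributes precisely one vertex with two children and no inputs, and since passing between representatives of $X$ only rewrites the $\NSOp$-part $I$ via the associativity relations \ref{NSOpRel_I}--\ref{NSOpRel_II} together with equivariance (which permute and re-index vertices but neither create nor destroy the two-childed $m$-vertices), this first bullet is insensitive to the choice of $I$. More precisely, by the normal form discussion following the definition of $\mNSOp$, any representative can be brought to the form $I \circ_{n+1} m \circ_{n+1} \ldots \circ_{n+1} m$ with $I \in \NSOp(\nth{q},2,\ldots,2;q)$, and the number $k$ of $m$'s is fixed by the remark (it equals $n - deg(W) - 1$, equivalently it is determined by the colors $\nth{q}$ and $q$).

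Next I would handle the two order conditions. Conditions (1) and (2) of Definition \ref{coloringWord} are phrased entirely in terms of the partial orders $<_{X}$ and $\tre_{X}$ on $\lh n \rh$ — and these are \emph{exactly} the objects shown to be well-defined in Lemma \ref{partialorders}. That is, Lemma \ref{partialorders} already establishes that $<_{I}$ and $\tre_{I}$ restricted to the genuine vertices $\lh n \rh$ do not depend on the representative $I$ of $X$, so the notation $<_{X}$, $\tre_{X}$ used in Definition \ref{coloringWord} is legitimate, and the biconditionals ``$u <_{X} v \iff W = \ldots u \ldots v \ldots u \ldots$'' and ``$u \tre_{X} v \iff$ (occurrences of $u$ all left of occurrences of $v$)'' are therefore statements about $X$ and $W$ only, with no residual dependence on $I$. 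Hence whether a given $X$ lies in $\Clr(W,\nth{q})$ is a property of $X$ and $W$, which is what well-definedness means.

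So the proof is essentially a two-line bookkeeping argument: the first bullet is stable under the $\NSOp$-associativity rewrites because those rewrites preserve the multiset of vertices and their in/out valences, and the remaining bullets are immediate from Lemma \ref{partialorders}. The only point requiring a word of care — and the mild ``obstacle'' — is making explicit that moving between two representatives $I$, $I'$ of the same $X$ is achieved purely by the relations \ref{NSOpRel_I}, \ref{NSOpRel_II} and equivariance applied inside the $\NSOp$-part (the $m$'s being plugged in at the fixed slot $n+1$ throughout), so that one really is in the situation covered by Lemma \ref{partialorders}; I would cite the normal-form paragraph after the definition of $\mNSOp$ for this and then conclude. I do not expect any computation beyond observing that the $m$-vertices are precisely the childless-input, two-output-children vertices and are therefore recognizable independently of how $I$ is presented.
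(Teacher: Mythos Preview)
Your handling of the two order conditions via Lemma~\ref{partialorders} is correct and matches the paper exactly: that lemma is precisely what makes $<_X$ and $\tre_X$ on $\lh n\rh$ representative-independent, so bullets (1) and (2) of Definition~\ref{coloringWord} are well-posed.

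The gap is in the first bullet. Two points. First, once the representative is in the normal form $I\circ_{n+1} m\circ_{n+1}\cdots\circ_{n+1} m$ with $I$ an honest indexed tree in $\NSOp$, the $\NSOp$-relations~\ref{NSOpRel_I}--\ref{NSOpRel_II} are already absorbed into $I$; the only remaining source of non-uniqueness is the $m$-associativity relation of $\mNSOp$, which at the level of $I$ replaces a sub-expression $I_0\circ_k E_1$ by $I_0\circ_k E_2$ at a pair of adjacent $m$-plugged vertices $k,k+1$. So the rewrite you need to analyze is not the one you name.

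Second---and this is the substantive issue---``has two children in $I$'' is a statement about the tree structure of $I$, not about the colour $2$ of $m$. An $m$-vertex can have $0$, $1$ or $2$ children in $I$; the condition in Definition~\ref{coloringWord} is that it has exactly $2$, and this is not preserved by fiat under $E_1\leftrightarrow E_2$. The paper checks it via the equivalence
\[
\text{$k,k{+}1$ each have $2$ children in $I_0\circ_k E_1$} \;\iff\; \text{$k$ has $3$ children in $I_0$} \;\iff\; \text{$k,k{+}1$ each have $2$ children in $I_0\circ_k E_2$},
\]
which is short but is the actual content of well-definedness for the first bullet. Your phrase ``neither create nor destroy the two-childed $m$-vertices'' is exactly the claim that needs this verification.
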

\begin{proof}
Due to lemma \ref{partialorders}, both $<_{X}$ and $\tre_{X}$ are well-defined. We show that the condition stipulating that all vertices of $I$ that are plugged by $m$'s have exactly two children, is independent of the representative of $X$. Thus, suppose $I = I_0 \circ_k \; \tikzfig{E_1}$ for some $I_0 \in \NSOp$, such that both vertices $k$ and $k+1$ of $I$ are plugged by $m$'s in $X$. Due to the relations in $\mNSOp$, $X$ can equivalently be represented using $I' := I_0 \circ \; \tikzfig{E_2}$. In this case, we have that vertices $k$ and $k+1$ each have exactly $2$ children in $I$ iff vertex $k$ has exactly $3$ children in $I_0$ iff vertices $k$ and $k+1$ each have exactly $2$ children in $I'$.
\end{proof}

We construct a word for every element of $\mNSOp$ satisfying the above criteria.

\begin{constr}\label{underlyingWord}
Let $X:=[I \circ_{n+1} m \circ_{n+1} \ldots \circ_{n+1} m ] \in \mNSOp(\nth{q};q)$ such that each vertex $a > n$ in $I$ has exactly two children, then we construct a word $W_{X} \in \FS(n)$ such that $X \in \Clr(W_{X},\nth{q},q)$.
\begin{itemize}
\item To every tree $T$ we can associate a word $W_{T} \in \FS(n+k)$ (see \cite[\S 2.3]{hawkins}).
\item Suppose for $X_{0}\in \mNSOp$ such that $X_{0}\circ_{n+1}m =X$ we have an associated word $W_{X_{0}}\in \FS(n+1)$, then let $W_{X}$ be the word given by deleting all occurrences of $n+1$. Then $W_{X}\in \FS(n)$ because $n+1$ had two children, so no degeneracy can occur.
\end{itemize}
\end{constr}

We consider an example of this procedure.

\begin{vb}
We consider the element
$$\tikzfig{Coloring_2} \in \mNSOp(q_1,q_2,q_3;q_1+q_2+q_3-1)$$
for some $1\leq i \leq q_1$ from example \ref{exampleColoring} and show how construction \ref{underlyingWord} assigns a word. First, we associate to the indexed tree 
$$\tikzfig{wordAssociate} \in \NSOp(q_1,q_2,q_3,2;q_1+q_2+q_3-1)$$
the word $1424341$ and then delete all occurrences of $4$ as it is plugged by an instance of $m$. As a result, we obtain the word $1231$.
\end{vb}

\begin{lemma}
For $X \in \mNSOp(\nth{q};q)$ we have $X \in \Clr(W_{X},\nth{q})$ and if $X \in \Clr(W,\nth{q})$, then $W = W_{X}$. 
\end{lemma}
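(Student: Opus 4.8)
The statement asserts two things: first, that the word $W_X$ produced by Construction \ref{underlyingWord} is indeed colored by $X$, i.e.\ $X \in \Clr(W_X, \nth{q})$; and second, that $W_X$ is the \emph{unique} word colored by $X$, so that $X \in \Clr(W,\nth{q})$ forces $W = W_X$. The plan is to handle both claims by induction on $k$, the number of $m$'s plugged into $X$, following the two-clause recursive structure of Construction \ref{underlyingWord}.

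\emph{First claim.} For the base case $k=0$ we have $X = [T,I]$ a plain indexed tree, and $W_X = W_T$ is the word associated to $T$ in \cite[\S 2.3]{hawkins}; one checks directly that the defining conditions of Definition \ref{coloringWord} — that $u <_X v$ iff $u$ is repeated with a $v$ strictly between two of its occurrences, and $u \tre_X v$ iff all occurrences of $u$ precede all occurrences of $v$ — are exactly the known relationship between the vertical/horizontal orders of $T$ and its word $W_T$ (here there are no $m$'s, so the first bullet of Definition \ref{coloringWord} is vacuous). For the inductive step, write $X = X_0 \circ_{n+1} m$ with $X_0 \in \mNSOp$ having $n+1$ labelled vertices and one fewer $m$; by induction $X_0 \in \Clr(W_{X_0}, \fromto{q}{n}, 2)$, and $W_X$ is obtained from $W_{X_0}$ by deleting all occurrences of the letter $n+1$. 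I would then verify: (i) vertex $n+1$ had exactly two children in $X$ (this is the hypothesis of the construction), so — since a vertex with two children corresponds in the word to a letter with two non-adjacent occurrences — deleting $n+1$ from $W_{X_0}$ creates no degeneracy and $W_X \in \FS(n)$ genuinely; (ii) that deleting a letter preserves, for the remaining letters $u,v \in \lh n\rh$, both the "interposition" relation governing $<$ and the "all occurrences left of" relation governing $\tre$, because $<_X$ and $\tre_X$ are the restrictions of $<_{X_0}$ and $\tre_{X_0}$ to $\lh n\rh$ (this is precisely the content of the displayed equivalences in the proof of Lemma \ref{partialorders}, applied to the extension corresponding to $\circ_{n+1} m$, together with the fact that no occurrence of $u$ or $v$ is removed). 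Hence $X \in \Clr(W_X,\nth{q})$.

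\emph{Second claim.} Suppose $X \in \Clr(W, \nth{q})$ for some $W \in \FS(n)$. Since, by Definition \ref{coloringWord}, the orders $<_X$ and $\tre_X$ are completely determined by $X$, and since these orders together with the length $|W| = n + \deg(W) + $ (number of occurrences not at first position)$\,$ are determined by $X$ (the number of plugged $m$'s is $n - \deg(W) - 1$, cf.\ the Remark after Definition \ref{coloringWord}, so $\deg(W)$ and hence $|W|$ are fixed by $X$), it suffices to show that a word in $\FS(n)$ is uniquely determined by its associated pair of partial orders $(<, \tre)$ together with its length — equivalently, that the map from words-colored-by-$X$ to such data is injective. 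I would argue this by a direct reconstruction: knowing $\tre_X$ tells us the left-to-right block structure of $W$, knowing $<_X$ tells us which letters are repeated and how the repetitions nest inside the blocks, and the no-interlacing condition (axiom (3) of $\FS$) rigidifies the remaining freedom, so the word is forced. Alternatively — and this is cleaner — I would combine the first claim with the first claim's uniqueness at each inductive stage: both $W$ and $W_X$ are colored by $X$, so it is enough to know that \emph{the construction's output is the only word colored by $X$ at the level $k$}, which again reduces to the base case ($W_T$ is the unique word of $\FS(n+k)$ with the vertical and horizontal orders of $T$, a fact from \cite[\S 2.3]{hawkins}) plus the observation that deleting the letter $n+1$ is injective on the relevant set of words since $n+1$'s positions are themselves recoverable from $W_{X_0}$'s order data.

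\emph{Main obstacle.} The routine part is the base case and the bookkeeping of which occurrences get deleted; the genuinely delicate point is the uniqueness in the second claim — showing that the pair $(<_X, \tre_X)$, plus the numerical data extracted from $X$, pins down the word $W$ exactly, with no residual ambiguity allowed by the no-interlacing and nondegeneracy axioms. I expect this to come down to the analogous uniqueness statement for $W_T$ versus $T$ in \cite[\S 2.3]{hawkins}, pulled through the letter-deletion step, so the real work is checking that letter-deletion is injective on words-colored-by-a-fixed-$X_0$ — i.e.\ that the positions of the letter $n+1$ in $W_{X_0}$ are themselves determined by $X_0$, which follows because $n+1 \tre$-dominates or is $\tre$-dominated by, or is $<$-comparable with, each other letter in a way dictated by the tree structure of any representative $I$ of $X_0$.
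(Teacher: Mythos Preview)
Your argument for the first claim ($X \in \Clr(W_X,\nth{q})$) is correct and matches the paper's: induct on the number of $m$'s, using that deleting the occurrences of $n+1$ from $W_{X_0}$ preserves the $<$ and $\tre$ relations among the remaining letters.

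For the second claim (uniqueness), however, there is a genuine gap. Your ``cleaner'' alternative frames the inductive step as \emph{injectivity of letter-deletion on words colored by $X_0$}. But by the inductive hypothesis there is exactly one such word, namely $W_{X_0}$, so injectivity is vacuous. What you actually need is the opposite direction: given an arbitrary $W$ with $X \in \Clr(W,\nth{q})$, you must produce a word $W'$ with $X_0 \in \Clr(W',\nth{q},2)$ such that deleting $n+1$ from $W'$ yields $W$. Only then does the inductive hypothesis force $W' = W_{X_0}$ and hence $W = W_X$. Your remark that ``$n+1$'s positions are recoverable from $W_{X_0}$'s order data'' is true but does not help, since you do not yet have $W_{X_0}$ in hand --- you have only $W$, a word on $\lh n\rh$.

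The paper closes this gap by an explicit construction of the lift. Let $a \tre_{X_0} b$ be the two children of vertex $n+1$ in $X_0$. Since $X \in \Clr(W,\nth{q})$, the relations $<_X$ and $\tre_X$ restricted to $\lh n\rh$ force $W$ to decompose as $W_0 W_1 W_2 W_3$ with $W_1 = a\ldots a$, $W_2 = b\ldots b$, and $W_0, W_3$ possibly empty (the block structure is dictated by $a \tre_X b$ and by how the remaining letters sit relative to $a$ and $b$). One then sets $W' = W_0\,(n{+}1)\,W_1\,(n{+}1)\,W_2\,(n{+}1)\,W_3$ and checks directly that $X_0 \in \Clr(W',\nth{q},2)$. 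This is the step your outline is missing.

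Your ``first alternative'' --- that a word in $\FS(n)$ is determined by $(<,\tre)$ plus its length --- is a plausible standalone statement and would indeed finish the proof, but you do not prove it, and it is not entirely obvious (one has to argue, for instance, that multiplicities and the nesting pattern of repeated letters are recoverable). The paper's explicit lifting avoids having to establish this.
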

\begin{proof}
This clearly holds for $X = I \in \NSOp(\nth{q};q)$. Assume the lemma holds for $X_{0} \in \mNSOp$ and $X = X_{0} \circ_{n+1} m$, then $W_{X_{0}} = W_{0} (n+1) W_{1}  (n+1) W_{2}  (n+1) W_{3}$ for $W_{0}$ and $W_{3}$ possibly empty. In this case, $W_{X} = W_{0}W_{1} W_{2} W_{3}$ and it is easy to see that $X \in \Clr(W_{X},\nth{q})$.

Now reversely, if $X \in \Clr(W,\nth{q})$ and $a \tre b$ are the two children of $n+1$ in $X_{0}$, then $W = W_{0}W_{1}W_{2}W_{3}$ where $W_{1}=  {a} \ldots  {a}, W_{2} =  b \ldots b$ and $W_{0}$ and $W_{3}$ are possibly empty. In that case, $X_{0} \in \Clr(W_{0} (n+1) W_{1}  (n+1) W_{2}  (n+1) W_{3},\nth{q},2)$ and thus by induction $W_{X_{0}} = W_{0} (n+1) W_{1}  (n+1) W_{2}  (n+1) W_{3}$. Hence, $W_{X} = W_{0}W_{1}W_{2}W_{3} = W$.
\end{proof}

\begin{lemma}\label{uniqueword}
Let $X \in \Clr(V,\nth{q},q)$ and $Y \in \Clr(W,\fromto{q'}{m},q_{i})$, then there exists a unique $U \in Ext(V,W,i)$ such that $X\circ_{i} Y\in \Clr(U,q_{1},\ldots,\fromto{q'}{m},\ldots,q_{n},q)$  
\end{lemma}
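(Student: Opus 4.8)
The statement asserts that given colorings $X \in \Clr(V,\nth{q},q)$ and $Y \in \Clr(W,\fromto{q'}{m},q_i)$, the operadic composite $X \circ_i Y$ in $\mNSOp$ is a coloring of exactly one extension $U \in Ext(V,W,i)$ in $\FS$. The strategy is to prove existence and uniqueness together by tracking the two data that define a coloring (the partial orders $<$ and $\tre$) through both the $\FS$-composition and the $\mNSOp$-composition, and matching them up. I would set $U := W_{X \circ_i Y}$, the underlying word produced by Construction \ref{underlyingWord} (which applies since $X \circ_i Y$ again has all $m$-plugged vertices with two children, this property being preserved under operadic composition in $\mNSOp$); then by the previous lemma $X \circ_i Y \in \Clr(W_{X\circ_i Y}, \dots)$ automatically, and again by that lemma $U$ is the \emph{unique} word with this coloring property. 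So the real content is to verify that this $U$ lies in $Ext(V,W,i)$, i.e. that it is one of the words appearing in the $\FS$-composite $V \circ_i W$.

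\textbf{Key steps.} First I would recall the extension data: underlying $X \circ_i Y$ in $\mNSOp$ is an extension $(\al,\be)$ of (the vertex set of) $V$ by that of $W$ at $i$, compatible with the tree/word substitution, and by Lemma \ref{partialorders} and the lemma computing $\zeta_{X \circ_i Y}$ the orders $<_{X \circ_i Y}$ and $\tre_{X \circ_i Y}$ are governed by $<_V, \tre_V$ on vertices outside $\Image(\al)$ (via $\be$) and by $<_W, \tre_W$ on vertices inside $\Image(\al)$, exactly as in the displayed formulas in the proof of Lemma \ref{partialorders}. Second, I would recall how $Ext(V,W,i)$ is characterized on the $\FS$ side (Hawkins, \S 2.3, quoted in \S\ref{parFS}): a word $X' \in Ext(V,W,i)$ is one where, up to relabelling and deletion of repetitions, $W$ sits as a subword and collapsing the $V$-letters to $i$ recovers $V$ — equivalently, its caesura/occurrence structure refines that of $V$ with $W$ inserted at the slot of $i$. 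Third, I would check that $W_{X \circ_i Y}$ meets this characterization by comparing, letter for letter, the occurrence pattern dictated by $<_{X\circ_i Y}$ and $\tre_{X \circ_i Y}$ (which a coloring forces, via Definition \ref{coloringWord}) with the pattern of an $\FS$-extension; since $X \in \Clr(V,\dots)$ forces $W_X = V$ and $Y \in \Clr(W,\dots)$ forces $W_Y = W$, the compatibility of the partial orders under composition translates directly into $W_{X \circ_i Y}$ being obtained from $V$ by substituting $W$ at $i$ and deleting repetitions — which is precisely membership in $Ext(V,W,i)$. Uniqueness is then free: any $U' \in Ext(V,W,i)$ with $X \circ_i Y \in \Clr(U', \dots)$ satisfies $U' = W_{X \circ_i Y}$ by the uniqueness clause of the previous lemma.

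\textbf{Main obstacle.} The delicate point is the bookkeeping in the third step: matching Hawkins' combinatorial description of $Ext(V,W,i)$ in $\FS$ (phrased in terms of subwords, relabelling, and deletion of repetitions, with the subtlety of whether $i$ is interposed in $V$) against the order-theoretic description of colorings in $\mNSOp$ (phrased via $<_X$ and $\tre_X$). One has to be careful that the vertex of $V$ indexed $i$, when expanded, interacts correctly with the children structure — in particular that no spurious degeneracy or interlacing is introduced, which is where the hypothesis that $m$-plugged vertices have exactly two children is used. I expect this to be essentially a careful unwinding rather than a source of a genuinely new idea, since all the needed invariance statements ($<_X$, $\tre_X$, $\zeta_X$ independent of representative; $W_X$ well-defined and unique) are already in place from the preceding lemmas.
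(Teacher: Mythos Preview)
Your plan is essentially the paper's own proof: set $U := W_{X\circ_i Y}$ via Construction~\ref{underlyingWord}, invoke the previous lemma for uniqueness, and then check $U \in Ext(V,W,i)$ by verifying the two defining conditions (that restricting to $\Image(\al)$ and collapsing repetitions recovers $W$, and that relabelling by $\be$ and collapsing repetitions recovers $V$).

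One point of calibration: you correctly flag that the two-children hypothesis on $m$-plugged vertices is the crux, but you slightly mislocate where it bites. It is not used to rule out degeneracy or interlacing in $U$ itself; rather, it is needed in the verification that $U_\be = V$. The non-obvious direction is: if $U_\be = \ldots i \ldots \be(u) \ldots i \ldots$ with $\be(u)\neq i$, one must show $V$ already has this pattern, i.e.\ that $i <_X \be(u)$. Concretely, $u$ sits horizontally between two vertices $\al(v), \al(v')$ of the inserted subtree $\al(Y)$ in $Z$; one then argues there is a vertex $a$ of $\al(Y)$ lying \emph{below} $u$ in the tree. If $a$ were $m$-plugged it would already have its two children inside $Y$, contradicting $a <_Z u$ with $u \notin \Image(\al)$; hence $a = \al(v'')$ for an honest vertex $v''$, giving $i = \be\al(v'') <_X \be(u)$ as required. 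This is the ``careful unwinding'' you anticipated, but it is a genuine argument rather than a triviality, so it is worth spelling out rather than absorbing into the phrase ``compatibility of the partial orders under composition.''
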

\begin{proof}
By construction \ref{underlyingWord} we obtain a word $U\in \FS(n+m-1)$ such that $Z := X \circ_{i} Y \in \Clr(U,\ldots)$. We show that $U \in Ext(V,W,i)$: let $U_{\al}$ be the word obtained from deleting from $U$ occurrences of vertices not in the image of $\al$ and eliminating consecutive repetitions ($uu\mapsto u)$. It is easy to check that $W = \ldots  {u} \ldots  {v} \ldots  {u} \ldots $ iff $U_{\al} = \ldots  {\al(u)} \ldots  {\al(v)} \ldots  {\al(u)} \ldots$, and that all occurrences of $u$ are left to those of $v$ in $W$ iff the same holds for $\al(u)$ and $\al(v)$ in $U_{\al}$.

Let $U_{\be}$ be the word obtained from $U$ by relabelling by $\be$ and eliminating consecutive repetitions. To verify that $U_{\be} = V$ is straight forward, except in the following case: $U_{\be} = \ldots {i} \ldots {\be(u)} \ldots {i} \ldots$ where $\be(u) \neq i$. In this case, there exist $v,v'$ such that $U = \ldots {\al(v)} \ldots {u} \ldots {\al(v')} \ldots$. Now, we argue that there exists a vertex $v''$ such that $\al(v'') <_Z u$. If neither $\al(v)$ nor $\al(v')$ do, then $U$ implies that $\al(v) \tre_Z u \tre_Z \al(v')$. As $\al(v)$ and $\al(v')$ are part of the same subtree $\al(Y)$, i.e. the image of $Y$ under $\al$ in $Z$, there must be
some vertex $a$ in the tree underlying $\al(Y)$ (possibly plugged by $m$) such that $a$ lies underneath $u$. As $Y$ is a coloring of a word, the conditions imply that $a$ is not plugged by an instance of $m$ (otherwise it would not have two children in $Y$). As a result, there is some vertex $v''$ in $Y$ such that $\al(v'') =a <_Z u$. Thus, $i <_X \be(u)$ which verifies that $V = \ldots i \ldots \be(u) \ldots i \ldots$. This clearly also holds reversibly.  
\end{proof}

\begin{lemma}\label{uniqueColorings}
Let $U \in Ext(V,W,i)$ and $Z\in \Clr(U,q_{1},\ldots,\fromto{q'}{m},\ldots,q_{n},q)$, then there exist unique colorings $X\in \Clr(V,\nth{q},q)$ and $Y\in \Clr(W,\fromto{q'}{m},q_{i})$ such that $Z = X \circ_{i} Y$.
\end{lemma}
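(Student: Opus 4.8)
First, I would prove existence and uniqueness together by induction on the number $k$ of copies of $m$ occurring in $Z$. The underlying picture is that $Y$ must be the coloring obtained by restricting the indexed tree of $Z$ to the vertices of the block realizing $W$ inside $U$ --- i.e.\ the image of $\al$, together with the $m$-vertices nested inside it --- while $X$ must be the coloring obtained by contracting that block to the single vertex $i$. Before touching the trees I would record that everything on the level of colours and degrees is already forced: using the Remark following Definition \ref{coloringWord} together with $\deg(U) = \deg(V) + \deg(W)$ (which holds for any $U \in Ext(V,W,i)$ because $\FS$ is a dg-operad, so $\circ_i$ preserves degree), the number of $m$'s in $Z$ splits as $k = (n - \deg(V) - 1) + (m - \deg(W) - 1)$, the input colours of $Y$ must be the sublist $\fromto{q'}{m}$ of the colours of $Z$, and its output colour must be $\sum_{l}(q'_l - 1) + m - \deg(W) = q_i$.

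In the base case $k = 0$ the count above forces $\deg(V) = n-1$ and $\deg(W) = m-1$, so that $V$ and $W$ are the words of unique trees $T_V$ and $T_W$ and $U$ is the word of a tree $T_U$ which one checks lies in $Ext(T_V,T_W,i)$ in the sense of $\Bracee$. Here $Z = (T_U,I)$ is simply an indexed tree: I restrict $I$ along the subtree $T_W \subseteq T_U$ to obtain $Y$ and contract $T_W$ to the vertex $i$ to obtain $X$. The defining inequalities of $\NSOp$ survive this operation because $T_W$ is attached to $T_U$ along a single edge, so contracting it cannot merge two children of any vertex outside; and then both $Z = X \circ_i Y$ and the uniqueness of $(X,Y)$ amount to the routine fact that tree substitution can be undone. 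One also has to check that this restriction/contraction is independent of the chosen representative of $Z$, which follows from Lemma \ref{partialorders} and the well-definedness of $\zeta_X$.

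For the inductive step ($k>0$) I would take the representative of $Z$ with all $m$-vertices last and write $Z = Z_0 \circ_{n+m} m$, where $Z_0$ is a coloring of a word $U_0 \in \FS(n+m)$ whose deletion at the vertex $n+m$ is $U$. Looking at the two children $a \triangleleft b$ of the vertex $n+m$ in $Z_0$, the two conditions of Definition \ref{coloringWord} --- the one governing interlacing via $<$ and the one governing the block structure via $\triangleleft$ --- force $a$ and $b$ to lie either both inside or both outside the $\al$-block of $U$. If both are inside, then $U_0 \in Ext(V,W^{+},i)$ where $W^{+} \in \FS(m+1)$ is the word that deletes to $W$; the induction hypothesis gives a unique decomposition $Z_0 = X \circ_i Y_0$, and plugging the removed $m$ back into the new vertex of $Y_0$ produces $Y \in \Clr(W,\fromto{q'}{m},q_i)$ with $Z = X \circ_i Y$. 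If both are outside, the situation is symmetric and produces $X = X_0 \circ_{n+1} m$. Uniqueness propagates because the case distinction, and hence the smaller decomposition appealed to, is visibly determined by $Z$.

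The step I expect to be the main obstacle is the combinatorial bookkeeping hidden in the inductive step: proving that the $\al$-block of $U$ is never ``straddled'' by the vertex $n+m$, and that $U_0$ genuinely belongs to $Ext(V,W^{+},i)$ (respectively $Ext(V^{+},W,i')$) with the correctly shifted insertion index and colours. This is precisely the place where both coloring conditions of Definition \ref{coloringWord} must be used in tandem, and it is the reverse analogue of the subtle case appearing at the end of the proof of Lemma \ref{uniqueword}; I expect the real work to lie there, everything else being bookkeeping or appeals to results already in hand.
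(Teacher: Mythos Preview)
Your approach is genuinely different from the paper's. The paper does not peel off $m$'s one at a time; instead it locates the subtree $I'$ underlying $Y$ in one shot. Writing $W = W_1 \cdots W_t$ as the decomposition into maximal blocks $W_j = a_j \ldots a_j$ with pairwise disjoint letter-sets, one has $\al(a_1) \tre_Z \cdots \tre_Z \al(a_t)$; the paper then takes for the root of $I'$ the $\leq_{I''}$-maximal $m$-vertex lying below all the $\al(a_j)$, and for $I'$ the minimal subtree on that root containing $\Image(\al)$. Contracting $I'$ gives $I$, and one checks directly that the resulting $X$ and $Y$ color $V$ and $W$.

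Unfortunately the obstacle you flag is more than bookkeeping: your dichotomy is false as stated. Take $V = 123 \in \FS(3)$, $W = 1 \in \FS(1)$, $i = 2$, so that $U = 123$ and $\Image(\al) = \{2\}$. Any coloring $Z$ of $U$ has exactly two $m$-vertices, and in every representative $I''$ they form a connected pair with the non-$m$ vertices $1,2,3$ sitting as leaves above them. The two possible binary representatives give the leaf $m$-vertex children $\{2,3\}$ or $\{1,2\}$ --- both straddle $\Image(\al)$ --- while the root $m$-vertex has one $m$-child, so ``inside/outside'' is undefined for it (and reading off its children in $<_{Z_0}$ yields three, not two). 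Here the correct decomposition has $Y$ a single vertex and both $m$'s in $X$, so every peeling step should land in your ``outside'' branch; yet no choice of representative and no choice of peeled vertex makes your criterion detect this. Repairing the induction would require, at each step, a rule deciding on which side of the eventual split the peeled $m$ lies --- and that global information is precisely what the paper's direct identification of the root of $I'$ supplies.
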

\begin{proof}
Let $Z = [ I'' \circ_{n+m} m \circ_{n+m} \ldots \circ_{n+m}m]$ with $l+k$ added $m$'s. The word $W$ can be uniquely written as 
$$W = W_{1}\ldots W_{t}$$
where two subwords $W_{j}$ and $W_{j'}$ do not share any occurrence of the same number, and $W_{j}$ is of the form ${a_{j}}\ldots {a_{j}}$. As $Z$ is a coloring of $U$, we have that $\al(a_{1})\tre_{Z} \ldots \tre_{Z} \al(a_{t})$ and no vertex of $\Image(\al)$ lies under any $a_{j}$. In this case, there exists some vertex $a\in \{n+m,\ldots,n+m+k+l-1\}$ such that $a\leq_{I''}\al(a_{j})$ which is $\leq_{I''}$-maximal for these conditions (otherwise when applying $\be$ to $U$ we will not obtain $V$).

 Let $I'$ be the minimal subtree of $I''$ on the root $a$ containing $\Image(\al)$. By contracting this subtree to a point we obtain a tree $I$ such that, after permutation of some vertices, we obtain $I \circ_{i} I' = I''$. Consider also the permutation such that $Z = [I'' \circ_{m+i} m \circ_{m+i} \ldots \circ_{m+i} m \circ_{n+m} m \circ_{n+m} \ldots \circ_{n+m} m]$.
 
It now suffices to show that $X := [I \circ_{n+1} m\circ_{n+1} \ldots \circ_{n+1} m] \in \Clr(V,\ldots)$ and $Y:= [I' \circ_{m+1} m \circ_{m+1} \ldots \circ_{m+1} m ] \in \Clr(W,\ldots)$, which is a straight forward computation using the facts $X \circ_{i} Y =Z$, $Z\in \Clr(U,\ldots)$ and $U \in Ext(V,W,i)$. 
\end{proof}

\subsubsection{Signs}

In order to define a sign $\sgn_{W}(X)$ for $W\in \FS(n)$ and $X\in \Clr(W,\nth{q})$ we use the morphism of operads
$$ \NSOp \longrightarrow \Multitr : (T,I) \longmapsto \zeta_{I}$$
which extends to 
$$\mNSOp \longrightarrow \Multitr: X \longmapsto \zeta_{X}$$
We base this definition of the sign on the sign $\sgn_{Q}(\zeta,I)$ defined in \cite[\S 4.7]{hawkins}.

\begin{lemma}
Let $X \in \Clr(W,\nth{q};q)$ for $W\in \FS(n)$ and $q_{i} > 0$, then $\zeta_{X}$ is a coloring of $W$ in the sense of \cite[Def. 4.13]{hawkins}, that is,
\begin{itemize}
\item $\zeta_{X} \in \Multitrp$,
\item $\bigcup_{i=1}^{n} \Image(\zeta_{X,i}) = [q]$
\item for each $a\in \lh n \rh$ there exists a function $\pi_{a}:[q_{a}] \longrightarrow W$ such that
\begin{enumerate}
\item the image of $\pi_{a}$ is the set of occurrences of $a$ in $W$,
\item for an $(i,a)\in W$, $\zeta_{X,a}(\pi_{a}\inv(i,a))$ is an interval
\item if $W= \ldots {a}{b} \ldots$, then 
$$\max \; \zeta_{X,a}(\pi_{a}\inv(i,a)) = \min \; \zeta_{X,b}(\pi_{b}\inv(i+1,b))$$
\end{enumerate}
\end{itemize} 
\end{lemma}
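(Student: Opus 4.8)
The plan is to verify the three listed conditions in turn. First fix a representative $I\in\NSOp(q_1,\dots,q_n,2,\dots,2;q)$ of $X$ with the $k$ auxiliary binary vertices plugged by $m$, so that $\zeta_{X,a}=\zeta_{I,a}$ for every $a\in\langle n\rangle$ by definition of $\zeta_X$. Two structural facts will be used throughout. First, by Construction \ref{underlyingWord} and the lemmas following it, $W=W_X$; in particular the occurrences of a fixed $a\in\langle n\rangle$ in $W$ coincide with its occurrences in the word of the underlying tree, which are untouched by deletion of the $m$-vertices. Second, since every $q_i>0$ and every $m$-vertex is binary, every vertex of $I$ has strictly positive arity; a straightforward induction over subtrees then shows that every subtree of $I$ has at least one leaf, and --- an $m$-vertex having two children, hence never being a leaf --- every leaf of $I$ lies in $\langle n\rangle$. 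It is convenient to read $\zeta_{I,a}(j)$ as the position, among the $q$ external inputs of $X$, of the $j$-th boundary of the vertex $a$ (the boundary separating its $j$-th from its $(j{+}1)$-st slot).

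For the first condition: between the $(j{-}1)$-st and the $j$-th boundary of $a$ lies the $j$-th slot of $a$, which is either itself an external input of $X$ or is filled by a subtree having $\ge1$ leaf, so in either case $\zeta_{I,a}(j)\ge\zeta_{I,a}(j-1)+1$; hence each $\zeta_{X,a}$ is strictly increasing and $\zeta_X\in\Multitrp$. (Alternatively, induct on the number of generators $E_i$ composing $I$, using that every factor $\zeta_{E_i,r}$ of Construction \ref{NSOpDelta} is strictly increasing as soon as all arities are positive.) For the second condition, fix $j\in[q]$. If $j=0$ (resp.\ $j=q$), then $j$ is the $0$-th boundary of the leftmost (resp.\ rightmost) leaf of $I$, which is original. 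If $0<j<q$, the external inputs $j$ and $j+1$ of $X$ lie in slots of two vertices; their lowest common ancestor $w$ must have $j$ among its boundaries, because any slot or child-subtree of $w$ lying strictly between the two relevant slots would contribute an external input strictly between $j$ and $j+1$, which is impossible. If $w\in\langle n\rangle$ we are done; otherwise $w$ is an $m$-vertex, $j$ is then also a boundary of one of its two children, and iterating down the chain of $m$-vertices reaches an original leaf $\ell$ with $j\in\Image(\zeta_{X,\ell})$.

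For the third condition I would construct $\pi_a$ explicitly. Let $\sigma(1)<\dots<\sigma(r)$ be the slots of $a$ that are filled by children; then, by Construction \ref{underlyingWord} and $W=W_X$, the vertex $a$ occurs exactly $r+1$ times in $W$, say as $a^{(0)},\dots,a^{(r)}$ in order of appearance, with the subtree of the $l$-th child of $a$ occupying the stretch of $W$ between $a^{(l-1)}$ and $a^{(l)}$. Setting $\sigma(0):=0$ and $\sigma(r+1):=q_a+1$, define $\pi_a\colon[q_a]\to W$ by $\pi_a(i):=a^{(s)}$, where $s:=\#\{\,l\mid\sigma(l)\le i\,\}$. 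Then the image of $\pi_a$ is exactly the set of occurrences of $a$; moreover $\pi_a^{-1}(a^{(s)})=\{\sigma(s),\dots,\sigma(s+1)-1\}$, whose intermediate slots $\sigma(s)+1,\dots,\sigma(s+1)-1$ are all external inputs, so on this set $\zeta_{X,a}$ is $i\mapsto\zeta_{X,a}(\sigma(s))+(i-\sigma(s))$ and its image is an interval. Finally, if $W=\dots a^{(s)}b\dots$, then $b$ is the first letter of the subtree of $a$'s $(s{+}1)$-st child when $s<r$, and the first letter to the right of $a$'s entire region when $s=r$; in both cases the position $\max\zeta_{X,a}(\pi_a^{-1}(a^{(s)}))=\zeta_{X,a}(\sigma(s+1)-1)$ is the left boundary of the region immediately to the right of that occurrence of $a$, and transporting this boundary downward through any $m$-vertices separating it from $b$ (leftmost boundaries being inherited by left children) identifies it with $\min\zeta_{X,b}(\pi_b^{-1}(i+1,b))$ for the occurrence of $b$ following $a^{(s)}$; the mirror-image argument handles a letter preceding $a$.

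The step I expect to be the main obstacle is this last verification of property $(3)$: the $m$-vertices are invisible in $W$ but present in $I$, so the single boundary that has to match $\max$ for $a$ against $\min$ for $b$ must be transported through a chain of $m$-vertices, and several boundary configurations ($s=r$; $b$ being the parent of $a$; that parent itself an $m$-vertex; $a$ or $b$ sitting at the extreme left or right of a subtree; and so on) must be checked individually. Each case reduces to the two elementary facts that two adjacent slots of a vertex share exactly one boundary, and that the leftmost (resp.\ rightmost) boundary of a vertex's region is the leftmost (resp.\ rightmost) boundary of its leftmost (resp.\ rightmost) child's region --- but organising this case analysis so that no configuration is overlooked is where the real effort lies.
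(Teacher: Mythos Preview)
Your approach is correct but takes a genuinely different route from the paper's. Where you work directly with a fixed representative $I$ of $X$ and argue combinatorially about boundary positions in one shot, the paper proceeds by a two-stage induction: first it proves the claim for $X\in\NSOp$ (no $m$-vertices) by induction on the number of vertices, decomposing $I$ into its root $u$ with maximal subtrees and defining $\pi_a$ recursively via the subtree $\pi$'s (and $\pi_u$ by the same explicit formula you give); then it handles general $X=[X_0\circ_{n+1}m]$ by induction on the number of $m$'s, observing that since $q_{n+1}=2$ and vertex $n+1$ has exactly two children, $\Image(\zeta_{X_0,n+1})$ is already covered by its neighbours' images, and the interval and matching conditions for consecutive letters survive deletion of $n+1$ from $W_{X_0}$.

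The inductive route buys exactly what you flag as the main obstacle: it dissolves the case analysis for property~(3). In the $\NSOp$ base case the parent of $a$ is always visible in $W$, so the $s=r$ case is simply $b=\text{parent}$ and no $m$-chain has to be traversed; in the inductive $m$-step one only has to check that deleting a \emph{single} binary vertex preserves the matching condition, which is one local verification rather than chasing boundaries up and down arbitrary chains of $m$-vertices. Your direct approach, on the other hand, makes the boundary interpretation of $\zeta_{I,a}$ explicit and gives $\pi_a$ in closed form for the full $X$. Two small corrections to your sketch: in the surjectivity argument, iterating down from an $m$-vertex $w$ reaches some original vertex in $\langle n\rangle$ but not necessarily a \emph{leaf}; and in property~(3), the $s=r$ case may require transporting the boundary \emph{upward} through $m$-parents (when $a$ is the right child of an $m$-vertex) before going down to $b$, so ``transport downward'' alone covers only part of that configuration --- though you do correctly list this among the cases to be checked individually.
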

\begin{proof}
We first show it holds for $X= (T,I) \in \NSOp$ by induction on the number of vertices: let $$ I = \quad \tikzfig{Decomposition_I_coloring}$$
be its decomposition into its root $u$ with maximal subtrees $I^i$. In this case, we have $W = u W^1 u \ldots u W^k u$ where the subwords $W_i$ represent the subtrees $I^i$. Let $\ga_{i}:\lh k_{i} \rh \hookrightarrow \lh n \rh$ be the maps embedding the tree $I^i_0$ onto $I^i$ in $I$ and $I^i_0\in \Clr(W^i_0,\ldots)$, then they extend to a map $\overline{\ga_i}: W^i_0 \hookrightarrow W$ inserting $W^i_0$ as $W^i$ into $W$. By induction, the lemma holds for $I^{i}_0 \in \Clr(W^i_0,\ldots)$. Let $(p_1,u),\ldots,(p_{k+1},u)$ be all the occurrences of $u$ in $W$, then we define
$$\pi_{\ga_{i}(a)} = \overline{\ga_{i}} \circ \pi_{a}^{I^{i}_0} \text{ and } \pi_{u}(t) =  \begin{cases} (p_1,u) & t < i_1 \\ 
							(p_j, u) & i_{j-1} \leq t < i_j \\
						(p_{k+1}, u) & i_k \leq t \end{cases}$$
						then it is easy to verify that these satisfy the above conditions. 
						
Now assume $X = [X_{0} \circ_{n+1} m]$ such that the lemma holds for $X_{0} \in \Clr(W_{X_{0}},\ldots)$, then $W_{X}$ is obtained from $W_{X_{0}}$ by deleting all occurrences of $(n+1)$. As the vertex $n+1$ has exactly two children in $X_{0}$ and $q_{n+1} = 2$, we still have that $\bigcup_{i=1}^{n} \Image(\zeta_{X,i}) =\bigcup_{i=1}^{n+1} \Image(\zeta_{X_{0},i}) =[q]$ and that
$$\max \; \zeta_{X_{0},a}(\pi_{a}\inv{(i,a)}) = \min \; \zeta_{X_{0},b}(\pi_{b}\inv{(i,b)})$$
for $W= \ldots {a}{b} \ldots$. Hence, $\zeta_{X}$ satisfies the lemma. 
\end{proof}

Let us define analogously the sign corresponding to the horizontal part of $\sgn_{Q}(\zeta_{X},I)$.

\begin{constr}
 We work with the following alphabet 
$$ 0_{i},\ldots,(q_{i}-1)_{i}$$
for $i=1,\ldots,n$ and define the word 
$$J(\nth{q}) = 0_{1}\ldots(q_{1}-1)_{1}\ldots0_{n}\ldots(q_{n}-1)_{n}$$
The second word $J_{W}(X)$ is the concatenation of two words $J_{0,W}(X)$ and $J_{1,W}(X)$ defined as follows
\begin{itemize}
\item $J_{0,W}(X)$ consists of all $0_{k}$ for $k$ interposed in $W$,  put in reverse $\downarrow$-order. 
\item $J_{1,W}(X)$ has in the $\zeta_{X,k}(i)$-th position $i_{k}$ for $ 1 \leq i \leq q_{k}-1$ for $k$ interposed, and in the $\zeta_{I,k}(i)$ position $i_{k}$ for $0 \leq i \leq q_{k}-1$ for $k$ not interposed. Note that we start from position $0$. 
\end{itemize}
\end{constr}

\begin{mydef}
For $X\in \mNSOp(\nth{q};q)$ where we replace those $q_{i} = 0$ by $2$, we define $\sgn_{W}(X)$ as the sign of the shuffle transforming $J(\nth{q})$ to $J_{W}(X)$.
\end{mydef}

\begin{vb}
Consider the words
$$W= 13121, \quad W'= 1231$$
and colorings
$$X = \quad \tikzfig{sgnColoring}, \qquad X'= \quad \tikzfig{sgnColoring2}$$
for which we calculate the words $J_W(X)$ and $J_{W'}(X')$ and their corresponding signs. In the first case, we have 
$$J_W(X) = 0_3 0_2 0_1 \ldots (i-1)_1 1_3 \ldots (q_3-1)_3 i_1 \ldots (j-1)_1 1_2 \ldots (q_2-1)_2 j_1 \ldots (q_1-1)_1$$
which corresponds to the sign $\sgn_W(X) = (-1)^{(q_2-1)(q_1-j) + (q_3-1)(q_1-i) + q_3q_2 +q_3-1}$. For the second case, we calculate
$$ J_{W'}(X') = 0_2 0_1 \ldots (k-1)_1 1_2 \ldots (q_2-1)_2 0_3 \ldots (q_3-1)_3 k_1 \ldots (q_1-1)_1$$
which corresponds to the sign $\sgn_{W'}(X') = (-1)^{ (q_2+q_3)(q_1-k) + q_1}$. Note in particular that in $J_{W'}(X')$ the letter $0_3$ is not taken to the front of the word as $3$ is not interposed in $W'$.
\end{vb}

\begin{lemma}\label{signsComposition}
Let $X \in \Clr(W,\nth{q})$ and $X'\in \Clr(W',\fromto{q'}{m})$, and $W''\in Ext(W,W',1)$, then $\sgn_{W}(X) \sgn_{W'}(X') = \sgn_{W''}(X\circ_{1}X') \sgn_{W,W',1}(W'')$.
\end{lemma}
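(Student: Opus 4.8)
The plan is to reduce the statement to a computation about shuffles of the auxiliary words $J(-)$, matching the $\FS$-side sign $\sgn_{W,W',1}(W'')$ against the $\mNSOp$-side data encoded by $\zeta_{X\circ_1 X'}$. I would start by unwinding all three signs in terms of shuffles: $\sgn_{W}(X)$ is the sign of the shuffle taking $J(\nth{q})$ to $J_W(X)$, similarly for $\sgn_{W'}(X')$, while $\sgn_{W''}(X\circ_1 X')$ takes $J(q'_1,\ldots,q'_m,q_2,\ldots,q_n)$ to $J_{W''}(X\circ_1 X')$, and $\sgn_{W,W',1}(W'')$ is the sign of the $(\deg W,\deg W')$-shuffle $\chi$ canonically attached to the extension $W''\in Ext(W,W',1)$ as in the \emph{Sign of Extension} paragraph. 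The key is then to exhibit the target word $J_{W''}(X\circ_1 X')$ as a composite of shuffles whose signs multiply to the right-hand side. This mirrors exactly the decomposition used in the proof of Theorem \ref{thmbraceNSOp}, and is the $\mNSOp$-analogue of the presheaf-level lemma \cite[\S 4.7]{hawkins} on which the definition of $\sgn_W(X)$ is modeled.

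Concretely, I would factor the shuffle $J(q'_1,\ldots,q'_m,q_2,\ldots,q_n)\leadsto J_{W''}(X\circ_1 X')$ through the intermediate word obtained by first shuffling the $q'$-block according to $X'$ (producing $J_{W'}(X')$) and the $q$-block according to $X$ (producing $J_W(X)$), and then performing the interleaving dictated by the extension. For this I would use Lemma \ref{uniqueColorings} to know that $X\circ_1 X'$ genuinely arises as such a composite, and the description of $\zeta_{X\circ_1 X'}$ in terms of $\zeta_X$ and $\zeta_{X'}$ via the extension maps $(\alpha,\beta)$ — the vertical/horizontal order relations $<_{X\circ_1 X'}$, $\tre_{X\circ_1 X'}$ restricted off $\Image(\alpha)$ agree with those of $X$ pulled back along $\beta$ (Lemma \ref{partialorders}), and on $\Image(\alpha)$ with those of $X'$. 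The block $J_1$ of $J_{W''}(X\circ_1 X')$ is then literally the concatenation/interleaving of the corresponding $J_1$-blocks of $X$ and $X'$ in the positions prescribed by $\zeta$, so the "horizontal" part of the shuffle splits as $\chi\cdot\chi'$ in the notation of the proof of Theorem \ref{thmbraceNSOp}. The extra shuffle sign $\sgn_{W,W',1}(W'')$ is precisely the cost of moving the interposed-letters block $J_0$ of $W'$ past that of $W$, which is what the $(\deg W,\deg W')$-shuffle $\chi$ in the \emph{Sign of Extension} paragraph records — the counts match because $|int(W)|=\deg W$.

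The main obstacle I expect is the careful bookkeeping of the $J_0$-block, i.e.\ the interposed letters, and in particular the case distinction in the \emph{Sign of Extension} paragraph concerning whether $i$ (here $i=1$) is itself interposed in $W$: if $1$ is interposed, an element of $int(W')$ gets identified via $\gamma$ with the first letter of $W'$ rather than via $\beta\inv$, and one must check that the reverse-$\downarrow$-order on $int(W'')$ used to build $J_{0,W''}(X\circ_1 X')$ is compatible with the orders on $int(W)$ and $int(W')$ up to exactly the shuffle $\chi$. I would handle this by splitting into the two cases "$1$ interposed in $W$" and "$1$ not interposed", in each case writing out $int(W'')$ as an ordered set in terms of $\alpha(int(W'))$ and $\gamma(int(W))$ and tracking which transpositions of adjacent letters are needed; everything else — the surjectivity, non-interlacing, and the interval conditions on the $\pi_a$ — is then routine and follows from the lemmas already established. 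I would also reduce to $i=1$ at the outset without loss of generality by equivariance of $\bar\phi$, exactly as is done for $\phi$ in the proof of Theorem \ref{thmbraceNSOp}, so that only the $\circ_1$ case needs to be treated in detail.
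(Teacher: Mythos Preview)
Your proposal is correct and follows essentially the same approach as the paper: decompose the sign into the $J_0$-part (interposed letters) and the $J_1$-part, identify $\sgn_{W,W',1}(W'')$ with the interleaving of the $J_0$-blocks, and use $\zeta_{X\circ_1 X'}=\zeta_X\circ_1\zeta_{X'}$ for the $J_1$-part. The paper organizes this slightly more cleanly by splitting each $\sgn_W(X)$ into \emph{three} shuffles $\sigma,\tau,\rho$ (reordering $J_0$ into natural index order, reordering $J_1$ into the block word $J_W^{int}$, and recombining to $J$), then checking $(-1)^{\sigma''}=(-1)^{\sigma+\sigma'}\sgn_{W,W',1}(W'')$, $(-1)^{\tau''}=(-1)^{\tau+\tau'}$, and $(-1)^{\rho''}=(-1)^{\rho+\rho'}$ separately; your anticipated case distinction on whether $1$ is interposed in $W$ is exactly what makes the $\rho$-identity work, since the length of $J_{W'}^{int}$ is $q_1-1$ or $q_1$ accordingly.
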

\begin{proof}
We can assume that all $q_{i}$ and $q'_{j}$ are not zero. We can decompose $\sgn_{W}(X)$ in three components 
\begin{itemize}
\item sign of the shuffle $\si$ shuffling $J_{0,W}(X)$ to $0_{v_{1}}\ldots 0_{v_{k}}$ for $v_{1} < \ldots < v_{k}$ the interposed vertices of $X$,
\item sign of the shuffle $\tau$ shuffling $J_{1,W}(X)$ to concatenation of $1_{i} \ldots (q_{i}-1)_{i}$ for $i$ interposed and $0_{i} \ldots (q_{i}-1)_{i}$ for $i$ not interposed. We call this latter sequence $J_{W}^{int}(\nth{q})$.
\item sign of the shuffle $\rho$ shuffling 
$$0_{v_{1}}\ldots 0_{v_{k}}J_{W}^{int}(\nth{q}) \leadsto J(\nth{q})$$
\end{itemize}
We add $'$ and $''$ to denote the correspondings shuffles for $X'$ and $X'':= X \circ_{1} X'$.

First we have that $(-1)^{\si''} = (-1)^{\si+\si'}\sgn_{W,W',1}(W'')$ per definition of $\sgn_{W,W',1}(W'')$. Further we clearly have $(-1)^{\tau''} = (-1)^{\tau+ \tau'}$ by simply applying them one after the other and renaming using $\al$ and $\be\inv$
$$ J_{1,W''}(X) \leadsto J_{1,W'}J_{W}(q_{2},\ldots,q_{n}) \leadsto J_{W'}(q'_{1},\ldots,q_{m}')J_{W}(q_{2},\ldots,q_{n})$$
as $\zeta_{X''} = \zeta_{X} \circ_{1} \zeta_{X'}$.
We also have that $(-1)^{\rho''}= (-1)^{\rho+\rho'}$ because the length of the sequence $J_{W'}^{int}(\fromto{q'}{m})$ is $q_{1}-1$ if $1$ is interposed, and $ q_{1}$ if $1$ is not interposed.
\end{proof}

\subsubsection{The morphism}

\begin{prop}\label{gradedoperadmap}
We have a morphism of graded operads 
$$ \bar{\phi} : \FS \longrightarrow \mNSOp_{st}: W \longmapsto \left( \sum_{X\in \Clr(W,\nth{q})} \sgn_{W}(X) X \right)_{\nth{q}}$$
\end{prop}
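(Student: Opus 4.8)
The plan is to run the usual checks for a morphism of symmetric graded operads: first that $\bar\phi$ is well defined, i.e. that for each $W \in \FS(n)$ the right-hand side is an element of $\mNSOp_{st}(n)$ and that $\bar\phi$ preserves degree; then that $\bar\phi$ is $\Ss$-equivariant; and finally, using equivariance to reduce to $i=1$, that $\bar\phi$ commutes with the partial compositions $\circ_i$. All the genuinely combinatorial content has already been isolated in Lemmas \ref{uniqueword}, \ref{uniqueColorings} and \ref{signsComposition}, so what is left is essentially an assembly of these.

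For well-definedness, recall from the remark following Definition \ref{coloringWord} that any $X \in \Clr(W, \nth q)$ lies in $\mNSOp(\nth q; q)$ with $q = \sum_{i=1}^{n}(q_i - 1) + n - deg(W)$, hence has standard degree $\sum_i q_i - q = deg(W)$, independently of $X$ and of the tuple $\nth q$. Thus $\bar\phi(W)$ is a sequence of constant grading $deg(W)$ — so it does lie in $\mNSOp_{st}(n)$ — and $\bar\phi$ is degree-preserving (finiteness of $\Clr(W, \nth q)$ for fixed $\nth q$ is clear from the constraints in Definition \ref{coloringWord}). For equivariance, given $\sigma \in \Ss_n$, relabelling vertices along $\sigma$ yields a bijection $\Clr(W, \nth q) \to \Clr(W^{\sigma}, q_{\sigma^{-1}(1)}, \dots, q_{\sigma^{-1}(n)})$, $X \mapsto X^{\sigma}$; comparing the shuffle from $J(\nth q)$ to $J_W(X)$ with the corresponding shuffle attached to $X^{\sigma}$ shows that the two shuffle signs differ exactly by the product of the $(-1)^{q_i q_j}$ over the inversions of $\sigma$, which is the sign governing the $\Ss_n$-action on $\mNSOp_{st}$. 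Hence $\bar\phi(W^{\sigma}) = \bar\phi(W)^{\sigma}$.

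For compatibility with composition, by equivariance it suffices to treat $\circ_1$ (in a symmetric operad the remaining $\circ_i$ are determined by $\circ_1$ and the symmetric actions). Expanding the two sides,
$$\bar\phi(W \circ_1 W') = \sum_{U \in Ext(W,W',1)} \sgn_{W,W',1}(U) \sum_{Z \in \Clr(U)} \sgn_U(Z)\, Z,$$
$$\bar\phi(W) \circ_1 \bar\phi(W') = \sum_{\substack{X \in \Clr(W) \\ Y \in \Clr(W')}} \sgn_W(X)\,\sgn_{W'}(Y)\, (X \circ_1 Y),$$
where the color-incompatible terms on the right vanish. Lemmas \ref{uniqueword} and \ref{uniqueColorings} together assert precisely that $(X,Y) \mapsto (U,\, Z := X \circ_1 Y)$, with $U$ the unique extension such that $Z \in \Clr(U)$, is a bijection between the index sets of these two sums; under it the terms $X \circ_1 Y$ and $Z$ coincide, and Lemma \ref{signsComposition} matches the coefficients, $\sgn_W(X)\sgn_{W'}(Y) = \sgn_U(Z)\,\sgn_{W,W',1}(U)$. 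Hence the two sums agree, and $\bar\phi$ is a morphism of graded operads.

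I expect the one point genuinely requiring care to be the sign computation in the equivariance step: this is where the combinatorial shuffle sign defining $\sgn_W$ must be reconciled with the ad hoc $(-1)^{q_i q_j}$ convention hard-wired into the $\Ss$-action on $\mNSOp_{st}$, and it is easy to drop a sign there. Everything else — in particular the $\circ_1$ computation — is bookkeeping once Lemmas \ref{uniqueword}, \ref{uniqueColorings} and \ref{signsComposition} are granted.
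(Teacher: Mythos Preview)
Your proof is correct and follows essentially the same approach as the paper: equivariance is asserted to follow from the definition of $\sgn_W(X)$, and compatibility with composition is deduced from Lemmas \ref{uniqueword}, \ref{uniqueColorings} and \ref{signsComposition}. Your version is simply a more detailed unpacking of the paper's two-line proof, including the explicit reduction to $\circ_1$ (which is implicit in the paper, since Lemma \ref{signsComposition} is only stated for $i=1$) and the well-definedness check.
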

\begin{proof}
By definition of $\sgn_{W}(X)$ the above linear maps are equivariant. By lemma \ref{uniqueword}, \ref{uniqueColorings} and \ref{signsComposition} they define a morphism of graded operads.
\end{proof}
 
We make $\mNSOp_{st}$ into a dg-operad with the hochschild differential, then $\phi$ will be a morphism of dg-operads.

\begin{mydef}
Define for every $q\in \N$ the element 
\begin{equation} \label{hochschild}
 D_q =   \; \tikzfig{Diff_1} \; + \quad \sum_{i=1}^{q} (-1)^{i} \;  \tikzfig{Diff_2} \; + (-1)^{q+1}  \; \tikzfig{Diff_3} \; \in \mNSOp(q;q+1)
\end{equation}
which compile into an element of degree $-1$ 
$$ D := (D_q)_{q\geq 0} \in \mNSOp_{st}(1).$$
We consider the associated derivation
$$\partial_{D}(X):= D \circ_{1} X - (-1)^{deg(X)} \sum_{i=1}^{n} X \circ_{i} D$$
for $X \in \mNSOp_{st}(n)$.
\end{mydef}

\begin{prop}\label{differential}
$\partial_{D}$ defines a differential making $\mNSOp$ into a dg-operad, for which holds 
$$\partial_{D}\left( \bar{\phi}(W)\right) =  \bar{\phi}\left(\partial(W)\right)$$
\end{prop}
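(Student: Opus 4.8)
The statement has two parts: first that $\partial_D$ is a differential (i.e.\ $\partial_D^2 = 0$), and second that $\bar{\phi}$ is a chain map with respect to $\partial_D$ on $\mNSOp_{st}$ and $\partial$ on $\FS$. For the first part, the plan is to observe that $\partial_D$ is the derivation associated to the degree $-1$ element $D \in \mNSOp_{st}(1)$, so by the standard fact about operadic derivations, $\partial_D^2 = \frac{1}{2}[\partial_D, \partial_D]$ is the derivation associated to $\partial_D(D) = D \circ_1 D - (-1)^{-1} D \circ_1 D$, hmm, more precisely $\partial_D^2 = 0$ follows from $D \circ_1 D = 0$ in $\mNSOp_{st}(1)$ (the ``Maurer--Cartan'' condition). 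So the real content of the first half is the identity $D \circ_1 D = 0$, which I would verify by a direct computation: expand $D \circ_1 D$ using the definition \eqref{hochschild} of $D_q$, collect the resulting sum of indexed trees with signs, and check the cancellations. This is essentially the classical computation that the Hochschild coboundary squares to zero, transcribed into the combinatorics of $\NSOp$ using the associativity relations \ref{NSOpRel_I} and \ref{NSOpRel_II} together with the defining relation of $m$ in $\mNSOp$; the signs are governed by the standard grading, and the Koszul-sign bookkeeping is where care is needed. Alternatively, and perhaps more cleanly, one can identify $D = \bar{\phi}(\text{something})$ — note $D_q$ matches the image pattern, and in fact $D$ should be $\bar\phi$ applied to the element $m$-related cochain, or one checks $D \circ_1 D = 0$ is the image under $\bar\phi$ of a known relation; but since $\bar\phi$ being a chain map is what we are trying to prove, it is safer to do the $D \circ_1 D = 0$ computation intrinsically in $\mNSOp$.

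For the second part, $\partial_D(\bar\phi(W)) = \bar\phi(\partial W)$, the plan is to reduce to generators. By Lemma \ref{GeneratorsFS}, $\FS$ is generated as an operad by $M_2 = 12$ and the elements $M_{1,k}$ for $k \geq 0$, and both $\partial$ and $\partial_D \circ \bar\phi - \bar\phi \circ \partial$ behave as derivations (using that $\bar\phi$ is an operad morphism by Proposition \ref{gradedoperadmap} and $\partial_D$ is a derivation by construction), so it suffices to check the identity on these generators. For $M_2$ we have $\partial M_2 = 0$ by Lemma \ref{GeneratorsFS}(1), and $\bar\phi(M_2)$ is the single coloring of $12$, which is the element $\tikzfig{Coloring_1}$ with both vertices of color giving $q_1 + q_2$; one checks $\partial_D$ of this vanishes, which amounts to the compatibility of $D$ with the multiplication-type element $\bar\phi(M_2)$, essentially the associativity relation defining $m$ in $\mNSOp$ expressed via $D$. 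For $M_{1,k}$, one uses the recursion in Lemma \ref{GeneratorsFS}(2), $\partial(M_{1,k}) = -(M_2^{(12)} \circ_2 M_{1,k-1}) + \sum_{i=2}^{k}(-1)^i M_{1,k-1}\circ_i M_2 + (-1)^{k+1} M_2 \circ_1 M_{1,k-1}$, apply $\bar\phi$ (which commutes with $\circ_i$ and the $\Ss$-action), and compare with $\partial_D(\bar\phi(M_{1,k}))$ computed directly from the definition of $D$ and the coloring $\bar\phi(M_{1,k}) = \tikzfig{...}$ corresponding to the word $121\ldots(k{+}1)1$. The terms of $D \circ_1 \bar\phi(M_{1,k})$ and $\bar\phi(M_{1,k}) \circ_i D$ should match up termwise with the images of the right-hand side of the recursion, once the signs are tracked through the standard grading.

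The main obstacle, as the paper itself flags (``we have to pay particular attention to the choice of signs''), is the sign bookkeeping. Specifically: (i) in the identity $D\circ_1 D = 0$, the Koszul signs from the standard grading on $\mNSOp_{st}$ must be shown to produce exactly the cancellations of the classical Hochschild differential; and (ii) in checking the chain-map property on $M_{1,k}$, one must verify that $\sgn_W$ of the colorings appearing in $\bar\phi(\partial M_{1,k})$ agrees with the signs produced by $\partial_D$ acting on $\bar\phi(M_{1,k})$ — this uses Lemma \ref{signsComposition} (compatibility of $\sgn_W$ with composition) together with an analysis of how $\sgn_W(\partial_i W)$ relates to $\sgn_W(W)$, i.e.\ the ``Sign of Deletion'' data, under the coloring correspondence. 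I would organize this by first establishing a clean description of $\bar\phi(M_{1,k})$ and its colorings, then computing both sides of the generator identity as explicit signed sums of indexed trees (with $m$'s plugged in), and finally matching them term by term; the bijection of terms should be transparent, leaving only the sign comparison, which I would handle using the shuffle-sign definitions of $\sgn_W$ and $\sgn_{W,W',i}$ established in Lemma \ref{signsComposition} and the preceding constructions. Since the problem has been set up so that $\bar\phi$ is already known to be an operad morphism and $\partial_D$ a derivation, no genuinely new structural input is needed — only a careful, somewhat lengthy verification on generators.
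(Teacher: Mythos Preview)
Your proposal is correct and follows essentially the same approach as the paper: reduce $\partial_D^2=0$ to the identity $D\circ_1 D=0$ (the classical Hochschild computation, which the paper dispatches by citing Gerstenhaber--Voronov), and verify the chain-map property on the generators $M_2$ and $M_{1,k}$ of $\FS$, using that $\bar\phi$ is already an operad morphism and $\partial_D$ a derivation. The paper's proof is terser---it writes out $\bar\phi(M_{1,k})$ and $\partial(M_{1,k})$ explicitly and then defers the sign verification to \cite{gerstenhabervoronov}---whereas you outline the sign bookkeeping in more detail, but the structure is the same.
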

\begin{proof}
The first part follows directly if $D \circ_{1} D =0$ which is an easy computation (see  \cite[Prop. 2]{gerstenhabervoronov}).

In order to prove the second part we only need to show this for the generators of $W$, i.e. $12$ and $121\ldots 1k1$ for $k\geq 1$.
\begin{itemize}
\item For $q_{1},q_{2}\in \N$, it is easy to compute that 
$$\partial_{D}\left(  \bar{\phi}(12) \right) = \partial_{D}\left(\; \tikzfig{Multiplication} \;\right) =0$$
 in $\mNSOp(q_{1},q_{2};q_{1}+q_{2})$. 
 \item For $\fromto{q}{k}\in \N$, we also have 
$$  \bar{\phi}(121\ldots 1 k1) = \sum_{1 \leq i_{1}< \ldots < i_{k-1} \leq q_{1}} (-1)^{\eps} \; \tikzfig{Corolla_Diff} \quad \text{ where } \quad \eps = \sum_{j=2}^{k} (q_{j}-1)(q_{1}-i_{j} + \sum_{l>j}(q_{l}-1))$$
and 
$$\partial(121\ldots 1k1) = - 2131\ldots 1 + \sum_{i=2}^{k-1}(-1)^{i} 1\ldots 1i(i+1)1\ldots 1 + (-1)^{k} 1\ldots 1k$$
for which it is also a standard computation to see that $\partial_{D}( \bar{\phi}(121\ldots 1k1)) =  \bar{\phi}( \partial(121\ldots 1k1))$ (see \cite[Thm 3]{gerstenhabervoronov}).
\end{itemize}
\end{proof}

\begin{theorem}\label{thmFSmNSOp}
We have a morphism of dg-operads 
$$ \bar{\phi} : \FS \longrightarrow \mNSOp_{st}$$
\end{theorem}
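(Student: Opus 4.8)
The plan is to assemble Theorem \ref{thmFSmNSOp} from the two propositions just established, after confirming that both sides carry compatible dg-operad structures. First I would recall that $\FS$ is already a dg-operad, with operadic composition $W \circ_i W' = \sum_X \sgn_{W,W',i}(X)\, X$ and boundary $\partial W = \sum_i \sgn_W(i)\, \partial_i W$ satisfying $\partial^2 = 0$. On the target side, Proposition \ref{differential} shows that the derivation $\partial_D$ associated to the Hochschild element $D = (D_q)_{q \geq 0} \in \mNSOp_{st}(1)$ of degree $-1$ squares to zero (this reduces to the verification $D \circ_1 D = 0$, a direct computation), so $(\mNSOp_{st}, \partial_D)$ is a dg-operad.

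Next I would invoke Proposition \ref{gradedoperadmap}: $\bar{\phi}$ is a morphism of \emph{graded} operads. Equivariance is immediate from the definition of $\sgn_W(X)$, and compatibility with operadic composition is precisely the content of Lemmas \ref{uniqueword}, \ref{uniqueColorings} and \ref{signsComposition} --- colorings of an extension $W'' \in Ext(W,W',i)$ are in bijection with pairs of colorings of $W$ and of $W'$, and the signs match up to the factor $\sgn_{W,W',i}(W'')$ appearing in the composition of $\FS$. I should also note that $\bar{\phi}$ is degree-preserving: by the remark following Definition \ref{coloringWord}, a coloring $X \in \Clr(W,\nth{q})$ lies in $\mNSOp(\nth{q}; \sum_i(q_i-1) + n - deg(W))$, so $deg(X) = deg(W)$, and since both $\FS$ and $\mNSOp_{st}$ use the standard grading the degree conventions match.

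Finally, Proposition \ref{differential} also supplies the identity $\partial_D(\bar{\phi}(W)) = \bar{\phi}(\partial W)$, checked on the generators $12$ and $121\ldots 1k1$ of $\FS$ from Lemma \ref{GeneratorsFS} and then propagated to all of $\FS$ using that $\partial$ and $\partial_D$ are operadic derivations while $\bar{\phi}$ is a morphism of graded operads. Putting the pieces together, $\bar{\phi}$ is a morphism of graded operads intertwining the two differentials, hence a morphism of dg-operads, as claimed. I do not anticipate a genuine obstacle at this stage: the mathematical substance has already been discharged in the preceding lemmas, and the only subtlety is confirming that the degree conventions --- standard grading on both sides and the $(-1)$-degree of $D$ --- are set up so that the sign $(-1)^{deg(X)}$ in the derivation formula for $\partial_D$ matches the corresponding sign in $\partial$.
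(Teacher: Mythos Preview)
Your proposal is correct and follows exactly the paper's own approach: the theorem is stated as a direct consequence of Propositions \ref{gradedoperadmap} and \ref{differential}, and your write-up simply unpacks that one-line proof. The extra verification you include about degree-preservation is fine but not strictly needed, since it is already implicit in the remark after Definition \ref{coloringWord}.
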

\begin{proof}
This is the direct consequence of propositions \ref{gradedoperadmap} and \ref{differential}.
\end{proof}

\section{The Gerstenhaber-Schack Complex For Prestacks}\label{parparGS}

Let $(\A, m, f, c)$ be a prestack over a small category $\U$ and let $\CGS(\A)$ be the associated Gerstenhaber-Schack complex as defined in \cite{DVL} (see \S \ref{parGS}).
In loc. cit., a homotopy equivalence $\CGS(\A) \cong \mathbf{CC}(\A!)$ is constructed with the Hochschild complex $\mathbf{CC}(\A!)$ of the Grothendieck contruction $\A!$ of $\A$. Through homotopy transfer, this allows to endow the GS-complex with an $L_{\infty}$-structure. However, it is desirable to have a direct description available of this structure, without reference to transfer.

In the case of a \emph{presheaf}, originally considered by Gerstenhaber and Schack, in \cite{hawkins}, Hawkins introduces an operad $\Quilt \subseteq \FS \otimes_H \Bracee$ which he later extends to an operad $\mQuilt$ acting on the GS-complex. These operads are naturally endowed with $L_{\infty}$-operations as desired.
The action of $\Quilt$ on the GS-complex considered by Hawkins only involves the restriction functors $f$ of the presheaf, the multiplication $m$ being incorporated later on in $\mQuilt$. Unfortunately, the way in which functoriality of $f$ is built into these actions, does not allow for an extension to twisted presheaves or prestacks. 

In our solution for the prestack case, we propose to use $\Quilt$ in a fundamentally different way in relation to the GS-complex, but still allowing us to make use of the naturally associated $L_{\infty}$-structure.
 In this section we capture the higher structure of $\CGS(\A)$ by introducing the operad $\Patch \subseteq \mNSOp \otimes_H \NSOp$ (see \S \ref{parP}) over which the bicomplex $\textbf{C}^{\bullet,\bullet}(\A)$, of which $\CGS(\A)$  is the totalisation, is shown to be an algebra (see Theorem \ref{thmPGS}).
Next, we construct a morphism $\Quilt \longrightarrow \Patch_s$ (see Proposition \ref{propQP}) as a restriction of
$$\bar{\phi} \otimes_H \phi: \FS \otimes_H \Bracee \longrightarrow \mNSOp_{st} \otimes_H \NSOp_s.$$
This morphism is such that the resulting composition
$$R: \Quilt \longrightarrow \End(s\CGS(\A))$$
incorporates the multiplications $m$ and the restrictions $f$. Note that in Hawkins' approach to the presheaf case, the initial action of $\Quilt$ on $\End(s\CGS(\A))$ only incorporates the restrictions. As far as the structure of both approaches goes, the auxiliary operad $\Patch$ we use is the counterpart of the operad $\mathrm{ColorQuilt}$ from \cite[Def. 4.6]{hawkins}.

In \S \ref{parc}, we will further extend the action $R$ in order to incorporate the twists.

\subsection{The GS complex}\label{parGS}

In this section, we recall the notions of prestack and its associated Gerstenhaber-Schack complex, thus fixing terminology and notations. We use the same terminology as in \cite{DVL}, \cite{lowenvandenberghCCT}.

A prestack is a pseudofunctor taking values in $k$-linear categories. Let $\U$ be a small category.

\begin{mydef}
A prestack $\A= (\A,m,f,c)$ over $\U$ consists of the following data:
\begin{itemize}
\item for every object $U \in \U$, a $k$-linear category $(\A(U), m^U, 1^U)$ where $m^U$ is the composition of morphisms in $\A(U)$ and $1^U$ encodes the identity morphisms of $\A(U)$.
\item for every morphism $u:V \longrightarrow U$ in $\U$, a $k$-linear functor $f^u = u \st : \A(U) \longrightarrow \A(V)$. For $u= 1_U$ the identity morphism of $U$ in $\U$,  we require that $(1_U)\st = 1_{\A(U)}$.
\item for every couple of morphisms $v: W \longrightarrow V, u: V \longrightarrow U$ in $\U$, a natural isomorphism
$$c^{u,v}: v\st u \st \longrightarrow (uv)\st.$$
For $u =1$ or $v=1$, we require that $c^{u,v} =1$. Moreover, the natural isomorphisms have to satisfy the following coherence condition for every triple $w:T \longrightarrow W$, $v :W \longrightarrow V$ and $u:V \longrightarrow U$:
$$c^{u,vw}(c^{v,w} \circ 	u \st) = c^{uv,w}(w\st \circ c^{u,v}).$$
\end{itemize}
\end{mydef}

Given such a prestack $\A$, we have an associated Gerstenhaber-Schack complex $\CGS(\A)$. In \cite{DVL} this is defined as the totalisation of a bicomplex $\textbf{C}^{\bu,\bu}(\A)$. We first review some notations.

\medskip

\noindent \emph{Notations.}
Let $\si = (U_{0} \overset{u_{1}}{\rightarrow} U_{1} \rightarrow  \ldots   \overset{u_{p}}{\rightarrow}  U_{p} )$ be a $p$-simplex in the category $\U$, then we have two functors $\A(U_{p}) \longrightarrow \A(U_{0})$, namely
$$\si\hs := u_{1}\st \circ \ldots \circ u_{p} \st \quad\text{ and }\quad \si\st := (u_{p}\circ\ldots \circ u_{1})\st$$
For each $1 \leq k \leq p-1$, denote by $L_{k}(\si)$ and $R_{k}(\si)$ the following simplices
\begin{align*}
L_{k}(\si) = (U_{0} \overset{u_{1}}{\rightarrow} U_{1} \rightarrow  \ldots   \overset{u_{k}}{\rightarrow}  U_{k}) \\
R_{k}(\si) = (U_{k} \overset{u_{k+1}}{\rightarrow} U_{k+1} \rightarrow  \ldots   \overset{u_{p}}{\rightarrow}  U_{p})
\end{align*}
and we consider the following natural isomorphisms
\begin{align*}
c^{\si,k} = c^{u_{k}\ldots u_{1},u_{p}\ldots u_{k+1}}:(L_{k}\si)\st(R_{k}(\si))\st \longrightarrow \si \st\\
\eps^{\si,k} = u_{1}\st \ldots u_{k-1}\st c^{u_{k},u_{k+1}} u_{k+2}\st \ldots u_{p}\st: \si\hs \longrightarrow u_{1}\st \ldots (u_{k+1}u_{k})\st\ldots u_{p} \st 
\end{align*}
We write $c^{\si,k,A}= c^{\si,k}(A)$ and $\eps^{\si,k,A}= \eps^{\si,k}(A)$ for $A\in \A(U_{p})$.

We also define a set $P(\si)$ of \textit{formal paths from $\si\hs$ to $\si \st$} inductively. A formal path is finite sequence of couples $(\tau,i)$ consisting of a simplex $\si$ and a natural number $i$. We set
$$P(u_1,u_2) := \{ ((u_1,u_2),1)\}$$ and
$$P(\si) := \{ (r_{1},\ldots,r_{p-2},(\si,i)): 1 \leq i \leq p-1 \text{ and } (r_{1},\ldots,r_{p-2}) \in P(\partial_{i}\si) \}$$
where $\partial_{i}$ denotes the $i$th face-operator of the nerve $N_{p}(\U)$. Given such a formal path $r=(\fromto{r}{p-1})$ we define its sign
$$(-1)^{r} = \prod_{i=1}^{p-1}(-1)^{r_{i}} \text{ where } (-1)^{(\si,i)}= (-1)^{i}.$$
By interpreting the data $(\si,i)$ as the natural isomorphism $\eps^{\si,i}$, every formal path $r \in P(\si)$ induces a sequence of natural isomorphisms $\overline{r} \in N_{p-1}(\Fun(\A(U_p),\A(U_0)))$. Note that $\eps^{(u_1,u_2),1} = c^{u_1,u_2}$ and its associated sign is $-1$.

Let $\Ss_{t,p-1}$ denote the set of $(t,p-1)$-shuffles, then given a formal path $r\in P(\si)$, a shuffle $\be \in \Ss_{t,p-1}$ and a tuple $a= (A_{0} \overset{a_{1}}{\leftarrow} A_{1} \leftarrow  \ldots   \overset{a_{t}}{\leftarrow}  A_{t}) \in N_{t}(\A(U_{p}))$  let $\be(a,r)\in N_{p-1+t}(\A(U_{0}))$ be its shuffle product with respect to evaluation of functors as defined in \cite[Ex. 3.2, Ex. 3.4]{DVL}.

Here, we give a more explicit definition of $\be(a,r)$: first we construct inductively a sequence $(\fromto{b}{t+p-1})$ which \emph{formally} represents a sequence of morphisms in $A(U_0)$. Every $b_i$ is either of the form $(\tau,a_i,A_{i-1})$ or $(r_i,A_j)$ for $\tau$ a simplex, $a_i$ and $A_j$ respectively a morphism and an object occurring in $a$, and $r_i$ an element of the formal path $r$. Define
$$b_{t+p-1} = \begin{cases} (\si, a_t, A_{t-1}) & \text{ if } \be(t) = t+p-1 \\ (r_{p-1},  A_t) & \text{ if } \be( t+ p -1  ) = t+p-1 \end{cases}$$
then for $1<i \leq t+p-1$, we have two cases: if $b_{i+1} = (\tau, a_j, A_{j-1})$ for some $j$, then define  
$$ b_{i} = \begin{cases} (\tau, a_{j-1}, A_{j-2}) & \text{ if } \be(j-1) = i \text{ for } j\leq t  \\
(r_k, A_{j-1}) & \text{ if } \be(t+k) = i \end{cases}$$
If $b_{i+1} = (r_k,A_j)$ for $r_k = (\tau,l)$, then define
$$ b_{i} = \begin{cases} (\partial_l\tau, a_{j}, A_{j-1}) & \text{ if } \be(j) = i \text{ for } j< t   \\
(r_{k-1}, A_j) & \text{ if } \be(t+k-1) = i \end{cases}$$
Finally, we define $\be(a,r)$ as the realization $\overline{b}=(B_{0} \overset{\overline{b_1}}{\leftarrow} B_{1} \leftarrow  \ldots   \overset{\overline{b_t}}{\leftarrow}  B_{p-1+t}) $ of $b$ where $\overline{b_i} = \tau\hs a_j$ if $b_i = (\tau, a_j,A_j)$ and $\overline{b_i} = \overline{r_k}(A_j)$ if $b_i = (r_k, A_j)$.
\medskip

\begin{mydef}
Let $p,q\geq 0$, then define
$$\textbf{C}^{p,q}(\A) = \prod_{\si \in N_{p}(\U)} \prod_{A \in \A(U_{p})^{q+1}} \Hom(\bigotimes_{i=1}^{q} \A(U_{p})(A_{i},A_{i-1}), \A(U_{0})(\si\hs A_{q},\si\st A_{0}))$$
and set
$$\CGS^{n}(\A) = \bigoplus_{p+q=n}\textbf{C}^{p,q}(\A)$$
The differential $d$ on the GS-complex is defined for $\te \in \textbf{C}^{p,q}(\A)$ as 
$$d(\te) = \sum_{j=0}^{q+1} d_{j}(\te)$$
where $d_{j}: \textbf{C}^{p,q}(\A) \longrightarrow \textbf{C}^{p+j,q+1-j}(\A)$ is defined as 
\begin{itemize}
\item 
\begin{align*}
d_{0}(\te)^{\si}(A)(a) &= m^{U_{0}}(\si\st(a_{1}), \te^{\si}(A_1,\ldots,A_{q+1})(a_{2},\ldots,a_{q+1}))  \\ 
&+ \sum_{i=1}^{q}(-1)^{i} \te^{\si}(A_0,\ldots,A_{i-1},A_{i+1},\ldots,A_{q+1})(a_{1},\ldots,m^{U_{p}}(a_{i},a_{i+1}),\ldots,a_{1})\\ 
&+ (-1)^{q+1}m^{U_{0}}(\te^{\si}(A_0, \ldots, A_{q})(a_{1},\ldots,a_{q}), \si\hs(a_{q+1}))
\end{align*} 
\item 
\begin{align*}
d_{1}(\te)^{\si}(A)(a) &= (-1)^{p+q+1} m^{U_{0}}(c^{\si,1,A_{0}} , u_{1}\st(\te^{\partial_{0}\si}(A)(a))) \\
&+ \sum_{i=1}^{p} (-1)^{p+q+1+i} m^{U_{0}}(\te^{\partial_{i}\si}(A)(a), \eps^{\si,i,A_{q}}) \\
&+ (-1)^{q} m^{U_{0}}(c^{\si,p,A_{0}} , \te^{\partial_{p+1}\si}(A)(u_{p+1}\st a_{1},\ldots,u_{p+1}\st a_{q}))
\end{align*}
\item 
\begin{align*}
d_{j}(\te)^{\si}(A)(a) = \sum_{\substack{ {r} \in P(R_{p}(\si))\\ \be \in \Ss_{q-j+1,j-1}}} (-1)^{{r}+\be+q-j+1} m^{U_{p+j}}(c^{\si,p,A_{0}},\te^{L_{p}(\si)}(B)(\be(a,r))
\end{align*}
\end{itemize} 
for $\si = (u_{1},\ldots,u_{p+j})\in N_{p+j}(\U)(U_{0},U_{p+j})$, $a = (\fromto{a}{q-j+1})$ where $a_{i} \in \A(U_{p+j})(A_{i},A_{i-1})$ and such that $B$ is the sequence of objects underlying $\be(a,r)$.
\end{mydef}


We will also be interested in the subcomplex $\CGSnr(\A) \sub \CGS(\A)$ of normalized and reduced cochains which is shown to be quasi-isomorphic to the GS complex (see \cite[Prop. 3.16]{DVL}). A simplex $\si= (\fromto{u}{p})$ is \emph{reduced} if $u_i = 1_{U_i}$ for some $1 \leq i \leq p$. A cochain $\te = \left(\te^{\si}(A)\right)_{\si,A} \in \CGS(\A)$ is \emph{reduced} if $\te^\si(A) = 0$ for every reduced simplex $\si$. A simplex $a=(\fromto{a}{q})$ in $\A(U)$ is \emph{normal} if $a_i= 1^{U}$ for some $1 \leq i \leq q$. A cochain $\te$ is \emph{normalized} if $\te^{\si}(A)(a) = 0$ for every normal simplex $a$ in $\A(U_p)$. We come back to this in section \S \ref{parc}.

Elements of the GS complex have a neat geometric interpretation as rectangles: for $\te \in \textbf{C}^{p,q}(\A)$ and the data $(\si,A,a)$ from above, we can represent $\te^\si(A)(a)$ as the rectangle of data
$$\tikzfig{GSelement}$$
Similarly, we can draw different components of the differential $d$ using rectangles, providing more insight in its rather technical definition. For the hochschild component $d_0$ we have
\begin{align*}
d_0(\te)^\si(A) = \quad \tikzfig{GSd0_1} &\quad + \quad \sum_{i=1}^q (-1)^i \quad \tikzfig{GSd0_i} \\
&\quad + (-1)^{q+1} \quad \tikzfig{GSd0_last}
\end{align*}
The first component $d_1$ can similarly be drawn as
\begin{align*}
d_1(\te)^\si(A) = \quad (-1)^{p+q+1} \tikzfig{GSd1_1} \; &+ \quad \sum_{i=1}^q (-1)^{p+q+i+1} \quad \tikzfig{GSd1_i} \\
&\quad + (-1)^{q} \quad \tikzfig{GSd1_last}
\end{align*}
Finally, we will draw $d_2$ as an example from which it is easy to deduce the higher components $d_j$ for $j>2$. Namely, we have
\begin{align*}
d_2(\te)^\si(A) = \quad \sum_{i=1}^{q} (-1)^{r + \be + q-2+1} \quad \tikzfig{GSd2} 
\end{align*}
for shuffle $\be(q)= i$, $\be(s) = s$ for $s<i$ and $\be(s) = s+1$ for $s\geq i$, and formal path $r=((u_p,u_{p+1}),1)$. Note in particular that we can draw $\be(a,r)$ as follows
$$\tikzfig{beta}$$
where $b_s= a_{\be\inv(s)}$ for $s\neq i$, and $b_i = c^{u_{p+1},u_{p+2}}(A_{i-1})$.

We will use this rectangular interpretation as a guide in the next sections.

\subsection{Endomorphism operad of a prestack}\label{parend}

Although the GS-complex does not have partial compositions $\circ_{i}$, its elements $\te = (\te^{\si}(A))_{(\si,A)}$ consist of parts that lie in the endomorphism operad $\End(\A)$.

\begin{mydef}
Let $\Ob(\U,\A)$ be the set consisting of the triples $(U,A,A')$ for $U \in \U$ and $A,A'\in \A(U)$, then we define the $\Ob(\U,\A)$-colored operad $\End(\A)$ as
$$\End(\A)((U_{1},A_{1},A_{1}'),\ldots,(U_{n},A_{n},A'_{n});(U,A,A')) := \Hom(\bigotimes_{i=1}^{n} \A(U_{i})(A_{i},A'_{i}), \A(U)(A,A')) $$
with partial compositions defined by composition of linear maps.
\end{mydef}

\begin{opm}
Note that $\te^{\si}(A) \in \End(\A)((U_{p},A_{1},A_{0}),\ldots,(U_{p},A_{q},A_{q-1});(U_{0},\si\hs A_{q},\si \st A_{0}))$.
\end{opm}

\subsection{The Operad $\Patch$}\label{parP}

In this section we define an $\N\times \N$-colored operad $\Patch\sub \mNSOp \times \NSOp$. Its elements encode concrete (planar) patchworks of rectangles of size $(p_{i},q_{i})$ to form a rectangle of size $(p,q)$.  

\begin{mydef}
Let $\Patch((q_{1},p_{1}),\ldots,(q_{n},p_{n});(q,p))$ consists of the elements
 $(X,J)\in \mNSOp(\nth{q};q)\times\NSOp(\nth{p};p)$ such that 
\begin{enumerate}
\item $a <_{J} b \Longrightarrow a \tre_{X} b$
\item $a <_{X} b \Longrightarrow b \tre_{J} a$
\end{enumerate}
\end{mydef}
\begin{opm}
Note that in order for $\Patch$ not to be empty, we need to allow a multiplication in one of its coordinates which is not present in the other coordinate. 
\end{opm}
This has a neat geometric interpretation as well: a $(p,q)$-rectangle has $p$ inputs on the right-hand side, $q$ inputs on top and a single output on respectively the bottom and the left-hand side
$$\tikzfig{GSrectangle}$$
We then interpret a patchwork $(X,J)$ as an ordering of these rectangles: the first coordinate $X$ represent the vertical ordering (from top to bottom) and the second coordinate $J$ the horizontal ordering (from right to left). The multiplications $m$ form a single exception: they appear only vertically, thus we draw them as \emph{flat} rectangles, that is, having no horizontal input and output. From this perspective, the conditions impose planarity on the patchwork such that we have
\begin{alignat*}{2}
&\text{below} <_X \text{above} \quad &&\text{above} \tre_J \text{below}
 \\
  &\text{left} \tre_X \text{right}  \quad &&\text{left} <_J \text{right}
\end{alignat*}

Note that when we write down a patchwork using rectangles, possible `open spaces' can appear. Moreover, it is possible that multiple rectangles are vertically the `lowest' elements due to the insertion of multiplication elements $m$. However, horizontally there can only appear a single most left rectangle which is (horizontally) connected to all other rectangles. We give an example.
\begin{vb}\label{ExPatch}
The following pair determines an element in $\Patch((3,5),(3,2),(2,1),(0,2);(6,7))$
$$(X,J) = \left(\quad \tikzfig{ExPatch_1} \quad, \quad \tikzfig{ExPatch_2}\quad\right)$$
which we can draw as the following patchwork of rectangles
$$\tikzfig{ExPatch_3}$$
where the grey areas denote the open spaces.
\end{vb} 

\begin{lemma}
$\Patch$ is a suboperad of $\mNSOp \hotimes \NSOp$.
\end{lemma}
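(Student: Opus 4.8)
The claim is that $\Patch$ is closed under the operadic composition of $\mNSOp \hotimes \NSOp$ (and under the symmetric group action), hence a suboperad. Since $\mNSOp \hotimes \NSOp$ carries the tensor-over-$H$ (Hadamard) operad structure, the composition $(X,J) \circ_i (X',J')$ is computed coordinatewise as $(X \circ_i X', J \circ_i J')$, and the $\Ss_n$-action is diagonal. The plan is therefore: first check the easy closure conditions (units and symmetric action), then do the one genuine verification, namely that if $(X,J)$ and $(X',J')$ both satisfy conditions (1) and (2) of the definition of $\Patch$, then so does $(X \circ_i X', J \circ_i J')$.

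First I would record the unit: the pair $(\mathrm{id}, \mathrm{id}) \in \mNSOp(q;q) \times \NSOp(p;p)$ corresponds to the one-vertex tree in each coordinate, where both partial orders are empty, so conditions (1)–(2) hold vacuously; it is the operadic unit of $\mNSOp \hotimes \NSOp$. For the $\Ss_n$-action: a permutation $\sigma$ acts diagonally, relabelling vertices $a \mapsto \sigma^{-1}(a)$ in both coordinates simultaneously; since conditions (1)–(2) only compare the relations $<_X, \tre_X$ on $\lh n\rh$ with $<_J, \tre_J$ on the \emph{same} set $\lh n \rh$, and both are transported by the same relabelling, the conditions are preserved. (Here I use that by Lemma \ref{partialorders} and the analogous statement for $\NSOp$, the orders $<_X, \tre_X, <_J, \tre_J$ are well-defined independently of representatives, so everything is representative-independent.)

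The core step is closure under $\circ_i$. Let $(\al,\be)$ be the extension of $n$ by $m$ at $i$ in the $\NSOp$-coordinate (the substitution of the $m$-vertex tree of $J'$ into vertex $i$ of $J$); the same combinatorial data $(\al,\be)$ governs the $\mNSOp$-coordinate composition $X \circ_i X'$, since by construction the composition in $\mNSOp$ restricts to the tree-substitution on underlying trees. The key combinatorial facts I would invoke are the substitution formulas: for $a,b \notin \Image(\al)$, one has $a <_{X \circ_i X'} b \iff \be a <_X \be b$ and $a \tre_{X \circ_i X'} b \iff \be a \tre_X \be b$ (this is exactly the displayed identity in the proof of Lemma \ref{partialorders}), and similarly for $J$; while for $a,b \in \Image(\al)$, writing $a = \al(a')$, $b = \al(b')$, one has $a <_{X \circ_i X'} b \iff a' <_{X'} b'$ and $a \tre_{X \circ_i X'} b \iff a' \tre_{X'} b'$. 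The mixed case $a \in \Image(\al)$, $b \notin \Image(\al)$ reduces to comparing $i$ with $\be(b)$ in $X$ (resp. $J$): since the whole subtree $\al(\text{of } X')$ sits where vertex $i$ used to be, $a$ relates to $b$ in $X\circ_i X'$ iff $i$ relates to $\be(b)$ in $X$ in the corresponding way (and symmetrically with $a,b$ swapped). With these dictionaries in hand, verifying conditions (1)–(2) for $(X \circ_i X', J \circ_i J')$ splits into three cases according to whether the two vertices being compared are both inside $\Image(\al)$, both outside, or one of each; in the "both inside" case it follows from (1)–(2) for $(X',J')$, in the "both outside" case from (1)–(2) for $(X,J)$, and in the mixed case from (1)–(2) for $(X,J)$ applied to the pair $\{i, \be(b)\}$.

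I expect the mixed case to be the main obstacle: one must be careful that a vertex $a = \al(a')$ inside the inserted block can be $<$-related to an outside vertex $b$ \emph{only through} the behaviour of $i$ relative to $\be(b)$, and that this is compatible in both coordinates — in particular one should check there is no way for a vertex of the inserted $X'$-block to acquire a "$<$" relation to an outside vertex that vertex $i$ did not already have in $X$, which is precisely the content of the substitution identity for $<$. Once the three-case analysis is set up using the substitution dictionaries (and the observation, already used in the proof of Lemma \ref{partialorders}, that $<$ and $\tre$ for elements of $\mNSOp$ behave under substitution exactly as for plain trees), the verification is a routine but slightly tedious check, and I would present it compactly by treating the $X$- and $J$-coordinates in parallel and noting the symmetry between conditions (1) and (2) under swapping the two coordinates and reversing the roles of $<$ and $\tre$.
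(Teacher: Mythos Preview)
Your proposal is correct and follows essentially the same three-case analysis as the paper's proof: both inside $\Image(\al)$, both outside, and mixed, using the substitution dictionaries for $<$ and $\tre$ together with the symmetry between conditions (1) and (2). One small imprecision: in the mixed case your ``iff'' is too strong for $<$ (e.g.\ $\al(a') <_{X''} b$ only \emph{implies} $i <_X \be(b)$, not conversely), but since the argument needs only the forward implication for $<$ and the backward implication for $\tre$, this does not affect the proof.
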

\begin{proof}
Let $(X,J)\in \Patch((q_{1},p_{1}),\ldots,(q_{n},p_{n});(q,p))$ and $(X',J')\in  \Patch((q'_{1},p'_{1}),\ldots,(q'_{m},p'_{m});(q_{i},p_{i}))$ and we set $X'':= X \circ_{i} X'$ and $J'':= J \circ_{i} J'$. Let $(\al,\be)$ be the extension of $n$ by $m$ at $i$, then for $a,b\in \lh m \rh$ we compute 
$$\al(a) <_{X''} \al(b) \iff a <_{X'} b \Longrightarrow b \tre_{J'} a \iff \al(b) \tre_{J''} \al(a)$$
and for $c,d\notin \Image(\al)$ we compute
$$ c <_{X''} d \iff \be c <_{X} \be d \Longrightarrow \be d \tre_{J} \be c \iff d \tre_{J''} \be c$$
For $c\notin \Image(\al)$ and $b\in \lh m \rh$, we have
$$ c <_{X''} \al(b) \Longrightarrow \be c <_{X} i \Longrightarrow i=\be \al(b) \tre_{J}  \be\al(b) \Longrightarrow \al(b) \tre_{J''} c$$
and the same reasoning shows $\al(b) <_{X''} c \Longrightarrow c \tre_{J''} \al(b)$. Completely symmetrically, this also shows that $c<_{J''} d\Longrightarrow c \tre_{X''} d$ for $c,d\in \lh n+m-1 \rh$.
\end{proof}

We again compile the colored operad $\Patch$ to obtain a graded non-colored operad
$$\Patch_s(n) \sub \prod_{\substack{\nth{q},q \\ \nth{p} }} \Patch((q_{1},p_{1}),\ldots,(q_{n},p_{n});(q,p))$$
where an element $x\in \Patch((q_{1},p_{1}),\ldots,(q_{n},p_{n});(q,p))$ is graded as 
$$|x| = \sum_{i=1}^{n}(q_{i} +p_{i} -1) - (q+p-1)$$ and $\Patch_s(n)$ is generated as a $k$-module by the sequences of constant degree. Its composition is derived from $\Patch$ where it is set to $0$ when the colors do not match.
 Note in particular that the $\Ss_{n}$-action on $\Patch(n)$ is affected by this grading: permuting two vertices $i$ and $j$ introduces a sign $(-1)^{(q_{i}+p_{i}-1)(q_{j}+p_{j}-1)}$.
 
 \begin{lemma}\label{lempatchdg}
$\Patch_s$ is a dg-suboperad of $(\mNSOp_{st} \hotimes \NSOp_s,(\partial_{D},Id))$. 
\end{lemma}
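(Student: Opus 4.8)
The plan is to show that $\Patch_s$ is closed under the differential $(\partial_D, \mathrm{Id})$ inherited from $\mNSOp_{st} \hotimes \NSOp_s$. Since Lemma above already establishes that $\Patch$ is a suboperad of $\mNSOp \hotimes \NSOp$, and the grading on $\Patch_s$ is compatible with those on $\mNSOp_{st}$ and $\NSOp_s$ (the degree $|x| = \sum(q_i + p_i - 1) - (q+p-1)$ is exactly the sum of the suspended degree on the $\NSOp_s$-coordinate and the standard degree on the $\mNSOp_{st}$-coordinate), the only thing left to check is that $\partial_D$ applied in the first coordinate sends an element of $\Patch_s$ back into $\Patch_s$. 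Concretely, for $(X,J) \in \Patch_s(n)$ we must verify that each term appearing in $\partial_D(X) = D \circ_1 X - (-1)^{\deg X}\sum_i X \circ_i D$, when paired with $J$ (which is unchanged), still satisfies the two patchwork compatibility conditions $a <_J b \Rightarrow a \tre_X b$ and $a <_X b \Rightarrow b \tre_J a$.

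First I would unwind what $\partial_D$ does combinatorially. The element $D = (D_q)_q$ is built from the three ``Hochschild-type'' trees in \eqref{hochschild}: plugging $m$ on top of a corolla, inserting $m$ between two consecutive inputs, and plugging $m$ below a corolla. Composing $D \circ_1 X$ or $X \circ_i D$ therefore modifies $X$ by inserting one extra binary vertex $v$ carrying $m$, either above, below, or beside a vertex of $X$. The key point is that inserting such an $m$-vertex never changes the vertical order $<$ or the horizontal order $\tre$ on the original vertices $\lh n \rh$ of $X$ (this is essentially Lemma \ref{partialorders}: the partial orders $<_X$ and $\tre_X$ are intrinsic and unaffected by the presence of $m$'s). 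Since $J$ is untouched, and the new vertex $v$ is an $m$-vertex which by convention carries no $\NSOp$-coordinate data (it appears only in the $\mNSOp$-coordinate), conditions (1) and (2) only ever need to be checked on the vertices of $\lh n \rh$, where they hold because they held for $(X,J)$. Hence every term of $\partial_D(X)$, paired with $J$, lies in $\Patch$.

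Then I would check the degree bookkeeping: each $D_q \in \mNSOp(q; q+1)$ has standard degree $q - (q+1) = -1$, so $\partial_D$ lowers total degree by $1$, consistent with $\Patch_s$ inheriting a dg-structure, and the constant-degree condition defining $\Patch_s(n)$ is preserved because $\partial_D$ acts uniformly across the product. Finally, $\partial_D^2 = 0$ on $\Patch_s$ since it holds on $\mNSOp_{st}$ (Proposition \ref{differential}, which reduces it to $D \circ_1 D = 0$), and compatibility of $\partial_D$ with the operadic composition on $\Patch_s$ is inherited from $\mNSOp_{st} \hotimes \NSOp_s$. This gives that $\Patch_s$ is a dg-suboperad.

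The main obstacle — though it is more a matter of care than of depth — is verifying rigorously that inserting an $m$-vertex via one of the three summands of $D_q$ genuinely leaves the conditions (1) and (2) intact, in particular handling the ``middle'' term where $m$ is inserted between two inputs of a vertex of $X$: one must confirm that this corresponds to the operadic composition $X \circ_i D$ with $D$ acting at a leaf-index and that the resulting indexed-tree structure, once the $m$'s are plugged in, still has all non-$m$ vertices related exactly as in $X$. Invoking Lemma \ref{partialorders} (and its $\Multitr$-counterpart) makes this clean, so the proof should be short.
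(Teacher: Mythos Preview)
Your argument is correct, but the paper's proof is one line and exploits a structural shortcut you overlooked. The differential $(\partial_D,\mathrm{Id})$ on $\mNSOp_{st}\hotimes\NSOp_s$ is precisely the derivation by the element $(D,1)$, where $1$ denotes the operadic unit in $\NSOp_s$: indeed $(D,1)\circ_1(X,J)=(D\circ_1 X,\,1\circ_1 J)=(D\circ_1 X,J)$ and likewise for $(X,J)\circ_i(D,1)$. Since $\Patch$ is already a suboperad (closed under all $\circ_i$), closure under the differential reduces to checking that $(D_q,1)\in\Patch((q,p);(q+1,p))$ for every $p,q$. But this is an arity-$1$ element, so the two compatibility conditions on pairs of vertices are vacuous. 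That is the whole proof.

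Your direct route---showing that each term of $\partial_D X$, paired with $J$, still satisfies the $\Patch$ conditions because inserting one extra $m$-vertex does not alter $<_X$ or $\tre_X$ on $\langle n\rangle$---is sound, but note that the reference to Lemma~\ref{partialorders} is not quite on target: that lemma asserts independence of the orders from the choice of \emph{representative} of a fixed $X$, not invariance under composing with $D$. What you actually need (and what is true) is that in $D\circ_1 X$ and $X\circ_i D$ the non-$m$ vertex set is still $\langle n\rangle$ and the new $m$ sits at a root or leaf position, hence cannot disturb relative orders among the original vertices. The paper's argument sidesteps this verification entirely.
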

\begin{proof}
It suffices to see that the elements $(D_q,1) \in \Patch((q,p);(q+1,p))$ for every $p,q\in \N$.
\end{proof}

\subsection{The morphism $\Patch_s \longrightarrow \End(s\CGS(\A))$}\label{parPGS}

In this section we make the GS-complex $\CGS(\A)$ of a prestack $\A$ into a $\Patch_s$-algebra. We do so by making its underlying bicomplex $\textbf{C}^{\bu,\bu}(\A)$ into a $\Patch$-algebra. 
We first fix some notations.

\begin{mydef}\label{subsimplex}
Let $\si = (U_{0} \overset{u_{1}}{\rightarrow} U_{1} \rightarrow  \ldots   \overset{u_{p}}{\rightarrow}  U_{p} )$ be a $p$-simplex in the category $\U$ and $\zeta : [p'] \longrightarrow [p]$ a non-decreasing map (or equivalently a non-decreasing sequence), then let $\overline{\zeta}$ be the reflection of $\zeta$, that is, 
$$  \overline{\zeta}(t) := p - \zeta(p'-t)$$
 and define
$$\zeta(\si):= N_\bu(\U)(\overline{\zeta})(\si)$$
  a $p'$-subsimplex of $\si$, where $N_\bu(\U)$ denotes the nerve construction on $\U$.
\end{mydef}
\begin{opm}
Note that we apply the reflection as we count the horizontal inputs of a patchwork from top to bottom (see example \ref{ExPatch}) instead of bottom to top (see further, example \ref{ExPatching}).
\end{opm}

Given a patchwork $(X,J) \in \Patch$, we now determine which simplices we need to fill in the `open spaces' in between the rectangles. We first sketch the idea.

Given a simplex $\si$ in $\U$ and a vertex $a$, we want to determine two sorts of simplices: for every vertical input $i= I(a,b)$ for some vertex $b$, we want to determine a simplex $\si(a,b)$ that we place between them. For the other vertical inputs $1 \leq i \leq q_a$, we determine a simplex $\si_a(i)$ to place on top of $a$ at input $i$. To do so, we determine the set of left-most vertices which do not ``surpass'' the $i$th input and that lie higher than vertex $a$. In the drawing below, this set consists of the vertices $e_1,e_2$ and $e_3$. To calculate $\si(a,b)$, we restrict this set to those vertices that still lie below vertex $b$, in this case, the vertices $e_2$ and $e_3$. 
 $$\tikzfig{Simplices}$$
We observe that each element of the GS complex composes in $\U$ the subsimplex corresponding to its horizontal inputs.
Hence, using our auxiliary set, we contract the corresponding subsimplices and obtain $\si(a,b)$ and $\si_a(i)$.

Note that we have not yet treated the multiplications $m$.   In order to do so, we have to add the following complexity. Let $X = [ I \circ_{n+1} m \circ_{n+1}  \ldots \circ_{n+1} m]$ where $I$ is an indexed tree with $n+k$ vertices, then we call a vertex $a$ of $I$ \emph{non-plugged} in $X$ if in $X$ it is not inserted by a multiplication element $m$. We continue with the above chosen representation of $X$ where $a$ non-plugged is equivalent to stating $a\leq n$.

\begin{mydef}
We define a function $ \mathop \downarrow: \lh n+k \rh \longrightarrow [ n ]$ on the vertices of $I$ which associates to every vertex $a$ the closest non-plugged vertex in $X$ under or equal to $a$, or $0$ if no such vertex exists. Concretely, 
$$ \mathop \downarrow a := \max_{<_{I}} \{ y \in \lh n \rh : y \leq_{I} a \} \text{ and } \mathop \downarrow a = 0 \text{ if the set is empty } $$
We also set $a \tre_{J} 0$ for every vertex $a$ and define
$$\zeta_{J,a} := \zeta_{J,\mathop\downarrow a} \text{ and } \zeta_{J,0}:= p$$
where $p$ is the total number of inputs of $J$.
\end{mydef}
\begin{opm}
This is clearly independent of the representative $I$ of $X$. Moreover, $\mathop \downarrow$ is for the given representative $I$ the identity on $\lh n \rh$.
\end{opm}

Next, we determine the auxiliary set.

\begin{mydef}
Consider a vertex $a $ of $I$ and let $b_{1} \tre_{I} \ldots \tre_{I} b_{t}$ be the children of $a$ in $I$ lying in $\lh n \rh$  with $i_{s} := I(a,b_{s})$. We then define
$$ L_{a}(i) = \begin{cases}\{ e \in \lh n \rh :  e \not \tre_{I} \mathop \downarrow a \text{ and } e \tre_{J} \mathop \downarrow a \} & i < i_{1} \\
 \{ e \in \lh n \rh : b_{s} \tre_{I} e \text{ and }  e \tre_{J} \mathop \downarrow a \} & i_{s} \leq i < i_{s+1} \\
 \{ e \in \lh n \rh : b_{t} \tre_{I} e \text{ and }  e \tre_{J} \mathop \downarrow a \} & i_{t} \leq i
  \end{cases} $$
for $i \in [ q_{a} ]$, and
 $$L(a,b_{s}) :=   \{ e \in \lh n \rh : b_{s} \tre_{I} e \text{ and } b_{s} \tre_{J} e \tre_{J} \mathop \downarrow a \}$$
 and let $\min L_{a}(i)$ and $\min L(a,b_s)$ be respectively the set of $<_{J}$-minimal elements of $L_{a}(i)$ or $L(a,b_s)$.
\end{mydef}
\begin{opm}
Remark that $L_{a}(i),L(a,b_s) \sub \lh n \rh$ and thus that it contains only vertices which are not plugged by $m$. By default, we will set the subsimplex underneath the plugged children of $a$ as empty (see definition \ref{SimplexInbetween}). 
\end{opm}
\begin{opm}
Note that the condition $e \not \tre_{I} \mathop \downarrow a$ appearing in the first case becomes superfluous in the others.
\end{opm}

\begin{mydef}
Let $a$ be a vertex of $I$ and $\min L_{a}(i) = \{ e_{1} \tre_{J} \ldots \tre_{J} e_{l} \}$, then we have the sequence of inequalities
 $$0 \leq \zeta_{J,e_{1}}(0) \leq \zeta_{J,e_{1}}(p_{e_{1}}) \leq \ldots \leq \zeta_{J,e_{l}}(p_{e_{l}}) \leq \zeta_{J,a}(0)$$
 and thus the non-decreasing sequence
 $$(0,1, \ldots, \zeta_{J,e_{1}}(0), \zeta_{J,e_{1}}(p_{e_1}),\ldots,\zeta_{J,e_{l}}(0),\zeta_{J,e_{l}}(p_{e_{l}}), \ldots, \zeta_{J,a}(0)) $$
 from which we delete $\zeta_{J,e_i}(p_{e_i})$ if $\zeta_{J,e_i}(p_{e_i}) =\zeta_{J,e_{i+1}}(0)$ or $\zeta_{J,e_l}(p_{e_l}) = \zeta_{J,a}(0)$. 
 This defines a subsimplex $\si_{a}(i)$ of $\si$ by definition \ref{subsimplex}.
\end{mydef}

\begin{mydef}\label{SimplexInbetween}
Let $a,b$ be vertices of $I$ such that $b$ is a child of $a$, and $\min L(a,b) = \{ e_{1} \tre_{J} \ldots \tre_{J} e_{l} \}$, then we have the sequence of inequalities
\begin{itemize}
\item if $i = I(a,b)$ for some vertex $b \in \lh n \rh$ (non-plugged)
 $$  \zeta_{J, b}(p_b) \leq \zeta_{J,e_{1}}(0) \leq \zeta_{J,e_{1}}(p_{e_{1}}) \leq \ldots \leq \zeta_{J,e_{l}}(p_{e_{l}}) \leq  \zeta_{J,a}(0)$$
 \item if $i = I(a,b)$ for some vertex $b > n$ (plugged)
 $$ \zeta_{J, b}(p_b) = \zeta_{J, a}(0)$$
\end{itemize}
and thus the non-decreasing sequence
$$(\zeta_{J,b}(p_b),\ldots, \zeta_{J,e_{1}}(0), \zeta_{J,e_{1}}(p_{e_1}),\ldots,\zeta_{J,e_{l}}(0),\zeta_{J,e_{l}}(p_{e_{l}}), \ldots, \zeta_{J,a}(0))$$
from which we delete $\zeta_{J,e_i}(p_{e_i})$ if $\zeta_{J,e_i}(p_{e_i}) =\zeta_{J,e_{i+1}}(0)$ or $\zeta_{J,e_l}(p_{e_l}) = \zeta_{J,a}(0)$. We also delete $\zeta_{J,b}(p_b)$ if it equals $\zeta_{J,e_1}(0)$.
 This defines a subsimplex $\si(a,b)$ of $\si$ by definition \ref{subsimplex}.
\end{mydef}
We consider an example.
\begin{vb}
Given the simplex $\si=(u_1,\ldots,u_8)$ and the following patchwork of rectangles
$$ \scalebox{1}{$\tikzfig{exampleL_2}$}$$
we analyse the case for rectangle $6$: we have
\begin{alignat*}{1}
&L_6(0) = L_6(1) = \{1,3,4,5,7\} \text{ and } \min L_6(0) = \min L_6(1) = \{ 1,3,5,7\},\\
&L_6(2) = L_6(3) = \{3,4,5\} \text{ and } \min L_6(2) = \min L_6(3) = \{3,5\}\\
&L(6,1) = \{ 3 \} \text{ and } \min L(6,1) = \{ 3\}
\end{alignat*}
and thus
\begin{align*}
\si_6(0) = \si_6(1) &= (u_3u_2, u_5u_4, u_6, u_8u_7),\\
\si_6(2) = \si_6(3) &= (u_3u_2, u_4,u_5,u_6,u_8u_7),\\
\si(6,1) &= (u_3u_2).
\end{align*}
\end{vb}

Now, we can assemble for every element of $\Patch$ a concrete patchwork of elements of $\CGS(\A)$ where the first coordinate determines a vertical patching using the operadic structure and the second component determines the horizontal patching to fill in and align the corresponding simplices. 
\begin{constr}\label{LL}
Given $\left(X,J\right) \in \Patch\left(\left(q_{1},p_{1}\right),\ldots,\left(q_{n},p_{n}\right);\left(q,p\right)\right)$ and $\te_{i} \in \textbf{C}^{p_{i},q_{i}}\left(\A\right)$, then we set $\te_{s} = m \in \textbf{C}^{0,2}\left(\A\right)$ for $s = n+1,\ldots,n+k$. 

Let $\si$ be a $p$-simplex in $\U$ and $A = \left(A_{0},\ldots,A_{q}\right)$ $(q+1)$-tuple of objects in $\A\left(U_{p}\right)$, then we define for every vertex $a$ in $I$
$$\Te_{a} := \te_{a}^{\zeta_{J,a}\left(\si\right)}\left(\si_{a}\left(0\right)\hs A_{\zeta_{I,a}\left(0\right)},\ldots,\si_{a}\left(q_a\right)\hs A_{\zeta_{I,a}\left(q_{a}\right)}\right)$$
and for every $i\in \lh q_{a} \rh$ we make the compositions
\begin{itemize}
\item if $i = I\left(a,b\right)$ for some vertex $b$,
$$ \Te_{a} \circ_{i} \left(\si\left(a,b\right)\hs \circ_1 \Te_{b}\right)$$
\item otherwise,
$$ \Te_{a} \circ_{i} \si_{a}\left(i\right)\hs\left(A_{\zeta_{I,a}\left(i-1\right)},A_{\zeta_{I,a}\left(i\right)}\right)$$
\end{itemize}
All these compositions together define 
$$\LL\left(X,J\right)\left(\nth{\te}\right)^\si\left(A\right) \in \Hom\left(\bigotimes_{i=1}^{q} \A\left(U_{p}\right)\left(A_{i},A_{i-1}\right); \A\left(U_{0}\right)\left(\si\hs A_{q}, \si\st A_{0}\right)\right)$$
\end{constr}

\begin{lemma}
Construction \ref{LL} is independent of the representative $I$ of $X$. \end{lemma}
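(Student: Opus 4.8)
The plan is to reduce everything to the two invariance facts already established earlier in the section, namely Lemma \ref{partialorders} (the partial orders $<_X$ and $\tre_X$ are independent of the representative $I$) and the lemma producing the well-defined map $\zeta_X$ together with the compatibility $\zeta_{X_{0,1},t}=\zeta_{X_0,t}=\zeta_{X_{0,2},t}$. Since Construction \ref{LL} is built entirely out of three kinds of ingredient — the contracted simplices $\si_a(i)$, $\si(a,b)$, the object-indices $\zeta_{I,a}(j)$, and the operadic composition pattern on $\End(\A)$ encoded by the tree $I$ together with $J$ — it suffices to check each ingredient only depends on the class $X=[I\circ_{n+1}m\circ\cdots\circ_{n+1}m]$ and on $J$, not on the chosen $I$. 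As in the proofs of Lemmas \ref{partialorders} and the $\zeta_X$-lemma, I would proceed by induction on the number $k$ of plugged $m$'s, the cases $k=0,1$ being vacuous, and for the inductive step it is enough to compare two representatives differing by a single application of relation \ref{NSOpRel_I}, i.e. $X=X_{0,1}\circ_{n+1}m\circ_{n+1}m=X_{0,2}\circ_{n+1}m\circ_{n+1}m$ with $X_{0,i}=X_0\circ_{n+1}E_i$, since any two representatives are connected by a chain of such moves (and equivariance moves, which only permute labels of plugged vertices and are visibly harmless here).

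The key steps, in order, are the following. First I would observe that the function $\mathop\downarrow$, the values $\zeta_{J,a}$, $\zeta_{J,0}$, and $\zeta_{J,a}(\si)$ depend only on $<_I$ (equivalently $<_X$) and on $J$, hence by Lemma \ref{partialorders} on $X$ and $J$ alone; this was in fact already remarked right after the definition of $\mathop\downarrow$. Second, I would check that each auxiliary set $L_a(i)$, $L(a,b)$ — and therefore its set $\min L_a(i)$, $\min L(a,b)$ of $<_J$-minimal elements — is expressed purely in terms of $<_I$, $\tre_I$, $\tre_J$, the children of $a$ lying in $\lh n\rh$, and the edge-labels $I(a,b_s)$; by Lemma \ref{partialorders} the first three are invariants $<_X$, $\tre_X$, $\tre_J$ of $X$ and $J$, and the children-in-$\lh n\rh$ together with their labels are exactly the data recording ``where the non-plugged inputs of $a$ sit'', which is $\zeta_{X,a}$-data and hence invariant by the $\zeta_X$-lemma. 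Third, with $\min L_a(i)$ and $\min L(a,b)$ shown invariant, the contracted simplices $\si_a(i)$ and $\si(a,b)$ from Definitions following are built by a deterministic recipe out of the $\zeta_{J,e}$'s and $\zeta_{J,a}$, so they are invariant too. Fourth, I would verify that the object-labels $\zeta_{I,a}(j)$ feeding into $\Te_a$ equal $\zeta_{X,a}(j)$ and are thus representative-independent. Finally, the pattern of partial compositions $\circ_i$ assembling the $\Te_a$ into $\LL(X,J)(\nth\te)^\si(A)$ is dictated by the edges of $I$, but switching $E_1\leftrightarrow E_2$ at a doubly-plugged vertex only reshuffles the two $m$-labelled slots; since $\te_{n+1}=\cdots=\te_{n+k}=m$ are all equal, and associativity of composition in $\End(\A)$ together with the relation $m\circ(m\otimes 1)=m\circ(1\otimes m)$ in $\mNSOp$ is exactly the content of the defining relation of $\mNSOp$, the resulting element of $\Hom(\cdots)$ is unchanged. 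Combining these, two representatives yield the same $\LL(X,J)(\nth\te)^\si(A)$, as claimed.

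I expect the main obstacle to be the bookkeeping in the fifth step: one has to be careful that when two vertices become ``simultaneously lowest'' because of inserted $m$'s, the way Construction \ref{LL} nests the compositions $\Te_a\circ_i(\si(a,b)\hs\circ_1\Te_b)$ across the plugged vertices genuinely matches the $\mNSOp$-relation, so that re-bracketing is licensed. Concretely, in the representative $I_0\circ_k E_1$ vertex $k$ has the plugged vertex $k+1$ as a child and in $I_0\circ_k E_2$ the roles inside $E$ are swapped; one must check that $\si(a,b)$ for the two plugged children is empty in both cases (this is Definition \ref{SimplexInbetween}, the ``plugged'' bullet giving $\zeta_{J,b}(p_b)=\zeta_{J,a}(0)$, hence the empty simplex), so that $\Te_b=m$ with no interposed restriction functor, and then the identity $m\circ_1 m\circ_1 m$-type rebracketing in $\End(\A)$ coming from associativity of $m^{U}$ closes the argument. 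This is routine but is the one place where the geometry of the patchwork and the algebra of $\mNSOp$ have to be matched by hand; everything else is an immediate transport of the already-proven invariance of $<_X$, $\tre_X$ and $\zeta_X$.
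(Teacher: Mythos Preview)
Your proposal is correct and follows the same approach as the paper, whose proof is a two-line appeal to the associativity of the local composition $m^U$ (the only relation imposed on $m$ in $\mNSOp$). You have carefully unpacked the invariance of all the auxiliary simplicial data that the paper leaves implicit, and correctly located the one substantive check --- that $\si(a,b)$ is trivial when $b$ is plugged, so no restriction functor obstructs the associativity rebracketing.
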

\begin{proof}
It suffices to verify the relation on the formal multiplication elements $m$ in $\mNSOp$. This follows directly from the associativity of the local composition $m^U$ of the category $\A(U)$ for every $U \in \U$.
\end{proof}

Let us work out an example.

\begin{vb} \label{ExPatching}
Consider the patching $(X,J)$ from example \ref{ExPatch}. Let $\te_{1} \in \textbf{C}^{5,3}(\A), \te_{2} \in \textbf{C}^{2,3}(\A), \te_{3} \in \textbf{C}^{1,2}(\A)$ and $\te_{4} \in \textbf{C}^{2,0}(\A)$, then we compute $\LL(X,J)(\te_{1},\te_{2},\te_{3},\te_{4}) \in \textbf{C}^{7,6}(\A)$. Given the simplex $(u_{1},\ldots,u_{7})\in N_{7}(\U)(U_{0},U_{p})$ and the objects $(A_{0},\ldots,A_{6}) \in \A(U_{p})$, we first compute 
\begin{align*}
\Te_{1} &= \te_{1}^{(u_{1},u_{3}u_{2},u_{4},u_{6}u_{5},u_{7})}(A_{0},A_{1},A_{2},A_{3}) \\
\Te_{2} &= \te_{2}^{(u_{2},u_{3})}(u_{4}\st(u_{6}u_{5})\st u_{7}\st A_{3}, u_{4}\st(u_{6}u_{5})\st u_{7}\st A_{5},u_{4}\st(u_{6}u_{5})\st u_{7}\st A_{6},  u_{4}\st u_{5}\st u_{6}\st u_{7}\st A_{6})\\
\Te_{3} &= \te_{3}^{(u_{6}u_{5})}( u_{7}\st A_{3},u_{7}\st A_{4},u_{7}\st A_{5}) \\
\Te_{4} &= \te_{4}^{(u_{5},u_{6})}(u_{7}\st A_{6})
\end{align*}
Then, given $(a_{1},\ldots,a_{6})$ where $a_{i} \in \A(U_{p})(A_{i},A_{i-1})$, we finally compute
$$m^{U_{0}}(\Te_{1}(a_{1},a_{2},a_{3}), u_{1}\st \Te_{2}( u_{4}\st \Te_{3}( u_{7}\st(a_{4}), u_{7}\st(a_{5}) ), u_{4}\st (u_{6}u_{5})\st u_{7}\st(a_{6}) , u_{4}\st \Te_{4})$$
which we can draw as follows
$$\scalebox{0.8}{$\tikzfig{Example_L}$}$$
\end{vb}

\begin{prop}\label{LLComp}
$$\LL(X,J) \circ_{a} \LL(X',J') = \LL(X \circ_{a} X', J \circ_{a} J')$$
\end{prop}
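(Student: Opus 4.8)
The plan is to verify the compatibility of the assignment $\LL$ with operadic composition by unwinding both sides on an arbitrary simplex $\si$ and tuple of objects $A$, and checking that the combinatorial bookkeeping — namely the choice of subsimplices $\si(a,b)$, $\si_a(i)$ and the colorings $\zeta_{I,a}$, $\zeta_{J,a}$ — is stable under substitution of trees. Concretely, I would fix $(X,J) \in \Patch((q_1,p_1),\dots,(q_n,p_n);(q,p))$, $(X',J') \in \Patch((q'_1,p'_1),\dots,(q'_m,p'_m);(q_a,p_a))$, write $X'' = X \circ_a X'$, $J'' = J \circ_a J'$, and let $(\al,\be)$ be the extension of $n$ by $m$ at $a$ as in \S\ref{parbrace} (and its $\NSOp$-counterpart for the indexed trees). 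Picking representatives $I$ of $X$ and $I'$ of $X'$, the tree underlying $X''$ is an element of $Ext(I,I',a)$, so that the vertices of $I''$ split as $\Image(\al)$ (the ``inner'' vertices, coming from $I'$) and the complement (the ``outer'' vertices, coming from $I$ with $a$ removed).

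The core of the argument is then a vertex-by-vertex comparison of the data produced by Construction \ref{LL}. First I would establish the colour/numbering identities: for an inner vertex $\al(b)$ one has $\zeta_{I'',\al(b)} = \zeta_{I,a} \circ \zeta_{I',b}$ and $\zeta_{J'',\al(b)} = \zeta_{J,a} \circ \zeta_{J',b}$ (using that $\zeta_{(-)}$ is a morphism of operads into $\Multitr$, Proposition \ref{propmapmulti}), while for an outer vertex $c$ one has $\zeta_{I'',c} = \zeta_{I,\be c}$ (up to the evident reindexing across $\Image(\al)$), and similarly for $J''$. Next I would show the subsimplex identities: for inner vertices the ``open space'' simplices satisfy $\si_{\al(b)}(i) = \si(a,-)\hs$-transported copies of the $\si'$-subsimplices of $\si(a,\cdot)(\dots)$, i.e. the subsimplex of $\si$ cut out at an inner vertex equals the subsimplex of the subsimplex $\zeta_{J,a}(\si)$ cut out by the corresponding data for $(X',J')$; for outer vertices the simplices are literally unchanged. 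This amounts to checking that the auxiliary sets $L_{a}(i)$ and $L(a,b)$ behave correctly: inner vertices see exactly the inner part of $L$, outer vertices see the old $L$ with the block $\Image(\al)$ collapsed to the single vertex $a$ — which is precisely the content of the planarity conditions defining $\Patch$ together with Lemma \ref{uniqueColorings} in spirit (the ``subtree on root $a$ containing $\Image(\al)$'' picture). Once these identities are in place, the composite of linear maps defining $\LL(X'',J'')(\te_1,\dots,\te_{a-1},\te'_1,\dots,\te'_m,\te_{a+1},\dots,\te_n)^\si(A)$ factors as: first perform all compositions among the inner vertices, obtaining exactly $\LL(X',J')(\te'_1,\dots,\te'_m)$ evaluated on the subsimplex $\zeta_{J,a}(\si)$ with the appropriate restricted objects; then plug this into the $a$-th input slot of $\LL(X,J)(\te_1,\dots,\te_n)$, which is the definition of $\LL(X,J) \circ_a \LL(X',J')$.

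For the write-up I would reduce to checking the statement on the generators of $\mNSOp$ and $\NSOp$ where possible, or — more robustly — argue directly from the recursive structure of Construction \ref{LL}: since $\LL$ is built by a sequence of partial compositions $\circ_i$ of endomorphism-operad elements indexed by the vertices and edges of $I$, and partial composition in $\End(\A)$ is associative, it suffices to check that the \emph{indexing data} for $X'' , J''$ is the concatenation of the indexing data for $(X,J)$ (with slot $a$ expanded) and for $(X',J')$. Given the colour and subsimplex identities above, this is a bookkeeping verification using associativity and equivariance of composition in $\End(\A)$.

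The main obstacle I anticipate is the subsimplex bookkeeping at the ``boundary'' between inner and outer vertices — specifically, verifying that $\si(a, \al(b))$ and $\si(\be c, a)$-type simplices match up, i.e. that collapsing the subtree $\Image(\al)$ to the vertex $a$ when computing $L(\be c, a)$ really reproduces the simplex that was used at slot $a$ of $\LL(X,J)$, and dually that the ``$\min L$'' sets for inner vertices only ever involve vertices of $\Image(\al)$ (never escaping into the outer tree). This is exactly where the two planarity conditions (1) and (2) defining $\Patch$ are needed: they guarantee that any outer vertex which is $\tre_J$-between two inner vertices is in fact $<_I$-above vertex $a$, hence does not appear in the relevant $L(a,b)$; this is the same mechanism as in the proof of Lemma \ref{uniqueColorings}. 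Modulo this point, everything else is a direct consequence of functoriality of $f$ (compatibility of restriction along composites with the $\zeta$-numbering) and associativity of composition of linear maps, so I do not expect further difficulties.
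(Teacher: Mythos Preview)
Your overall strategy --- a vertex-by-vertex comparison of the data produced by Construction \ref{LL} for $(X'',J'')$ against the composite $\LL(X,J)\circ_a\LL(X',J')$, using that $\zeta$ is an operad morphism (Proposition \ref{propmapmulti}) --- is exactly the paper's approach. However, two points need correcting.

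First, your expectation that ``the $\min L$ sets for inner vertices only ever involve vertices of $\Image(\al)$'' is false for the sets $L_a(i)$. For an inner vertex $\al(x')$, the set $L''_{\al(x')}(i)$ consists of non-plugged vertices $e$ with $e \tre_{J''} \mathop\downarrow \al(x')$ satisfying the relevant vertical condition; this certainly includes outer vertices lying to the left of the entire substituted block $\al(J')$. The correct statement is that $\min L''_{\al(x')}(i)$ is the \emph{union} of $\min L'_{x'}(i)$ (the inner contribution) and $\min L_{a}(i')$ for an appropriate $i'$ (the outer contribution), so that $\si''_{\al(x')}(i)$ is the \emph{concatenation} of the simplex $\si'_{x'}(i)$ (computed for $\si' = \zeta_{J,a}(\si)$) with $\si_a(i')$. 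This is precisely what makes the two sides agree: on the left-hand side, the inner $\LL(X',J')$ produces $\si'_{x'}(i)$, and then the outer $\LL(X,J)$ post-composes with $\si_a(i')^\#$. Your claim \emph{is} correct for the edge-sets $L(\al(a'),\al(b'))$ between two inner vertices, since an outer vertex cannot be $\tre_{J''}$-strictly between two vertices of the subtree $\al(J')$; but you must not conflate the two cases. The planarity conditions of $\Patch$ are not what prevents escaping here --- rather, escaping does happen and the concatenation is the mechanism by which the two sides match.

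Second, and more seriously, you do not address what happens when a plugged vertex $x$ (one inserted by $m$) of $X$ sits above the substituted slot $a$. In $\LL(X,J)$, the block $\Te_{\be x}$ is the multiplication $m^{U_{\zeta_{J,a}(0)}}$; but in $\LL(X'',J'')$, since $\mathop\downarrow x$ may now land on an inner vertex $\al(y)$, the block $\Te''_x$ is $m^{U_{\zeta_{J,a}\zeta_{J',y}(0)}}$ --- a multiplication in a \emph{different} category. These agree only after invoking the functoriality $u^\ast \circ m^U = m^V \circ (u^\ast \otimes u^\ast)$ to ``pull down'' the multiplication past the intervening restriction functors. This is a distinct phenomenon from the ``functoriality of $f$'' you mention (compatibility with composition in $\U$), and it is essential for the identity to hold; without it the proof has a genuine gap.
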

\begin{proof}
We can assume without loss of generality that $a = n$ as $\LL$ is clearly equivariant.

Let $(X'',J'') := (X,J) \circ_{n} (X',J')$, then we add $'$ or $''$ to denote the notions associated to $(X',J')$ or $(X'',J'')$. Let $I$ and $I'$ be the underlying trees representing $X$ and $X'$ having respectively $n+k$ and $m+k'$ vertices, then let $(\al,\be)$ be the extension of $n+k$ by $m+k'$ at $n$. Let $(\overline{\al},\overline{\be})$ be the extension of $n$ by $m$ at $n$.

We compute
\begin{equation}
\LL(X,J)(\te_{1},\ldots,\te_{n-1},\LL(X',J')(\te_{n},\ldots,\te_{n+m-1}))^{\si}(A) \label{PatchProof1}
\end{equation}

and show that it equals 
\begin{equation}
\LL(X'',J'')(\fromto{\te}{n+m-1})^{\si}(A) \label{PatchProof2}
\end{equation}

for $\si \in N_{p}(\U)$ and $A=(A_{0},\ldots,A_{q})$ objects in $\A(U_{p})$.

It is clear that per construction the blocks involved are composed according to $X'' = X \circ_{n} X'$. Hence it suffices to verify that they correspond to the blocks $\Te_x''$ in $\LL(X'',J'')$ and that the functors used to fill in the open spaces, agree.

First, for $x$ a non plugged vertex of $I''$ in $X''$, it is clear that $\Te''_x$ is either $\Te_{\be(x)}$, or $\Te_{\al \inv(x)}'$	 evaluated at $\si' = \zeta_{J,n}(\si)$. Next, we verify the simplices $\si''_i(x)$. For its $i$th input, we have the following two cases:
\begin{itemize}
\item if $x$ does not lie in the image of $(X',J')$, then $\si_{\be x}(i) = \si''_x(i)$ because if $n \in \min L_{\be x}(i)$ then it is replaced by $\overline{\al}(r')$ for $r'$ the root of $J'$ for which holds $\zeta_{J'',\overline{\al}(r')}=\zeta_{J,n}\zeta_{J',r'}$.
$$\scalebox{0.8}{$\tikzfig{Proof_Patch_1}$}$$
As a result, in both (\ref{PatchProof1}) and (\ref{PatchProof2}) we have the term $\Te_{\be x} \circ_i \si_{\be x}(i)\hs$.
\item if $x$ is part of $(X',J')$, i.e. $x= \al(x')$ for some vertex $x'$, then $\min L''_{\al(x')}(i)$ is the union of $\min L'_{x'}(i) $ and $\min L_{n}(i')$ for some $i'$. Hence, we obtain the concatenation of $\si'_{x'}(i)$ for $\si' = \zeta_{J,n}(\si)$ and $\si_n(i')$. As $\zeta''_{J''} = \zeta_{J}\circ_n \zeta_{J'}$, this corresponds exactly to $\si''_{x}(i)$.
$$\scalebox{0.8}{$\tikzfig{Proof_Patch_2}$}$$
Hence, the corresponding term in both calculations agrees.
\end{itemize}
Next, we calculate $\si''(x,b)$ for $b$ a child of $x$ in $(X'',J'')$ that is not plugged. We again have three cases
\begin{itemize}
\item if both $x$ and $b$ lie either outside or inside the image of $(X',J')$, then clearly $\si''(x,b) = \si(\be x,\be b)$ or $\si'(\al\inv x ,\al \inv b)$ for $\si' = \zeta_{J',n}(\si)$ due to the previous reasoning and thus the terms agree.
\item if $b$ lies in the image of $(X',J')$, i.e. $b =\al(b')$, and $x$ does not, then $b'$ is clearly the root of $X'$. As a result, $\si''(x,b) = \si(\be x, n)$ and thus the terms agree.
\item if $x$ lies in the image of $(X',J')$, i.e. $x = \al(x')$, and $b$ does not, then $\min L''_{\al(x')}(i)$ is the union of $\min L'_{x'}(i) $ and $\min L_{n}(i')$ for some $i'$. 
$$\scalebox{0.8}{$\tikzfig{Proof_Patch_3}$}$$
Hence, we obtain in (\ref{PatchProof1}) the concatenation of $\si(n,\be b)$ and  $\si'_{x'}(i)$ for $\si' = \zeta_{J,n}(\si)$, which corresponds exactly to $\si''(x,b)$.
\end{itemize}
In case either $x$ or $b$ is plugged, we possibly have to apply the functorial property of the restrictions, i.e. $u\st \circ m^U = m^V \circ (u \st \otimes u \st)$ for $u:V \rightarrow U$ in $\U$, to pull down $\Te_{\be x} = m^{U_{\zeta_{J,\be x}(0)}}$ or $\Te'_{\al \inv x} = m^{U_{\zeta_{J,n}\zeta_{J',\al \inv x}(0)}}$. Specifically, in the following cases
\begin{itemize}
\item let $\be x$ lie on top of $n$ in $(X,J)$ and $\mathop \downarrow x = \al(y)$ for some vertex $y$ of $(X',J')$. 
$$\scalebox{0.8}{$\tikzfig{Proof_Patch_4}$}$$
In this case, $\Te_{\be x}= m^{U_{\zeta_{J,n}(0)}}$ occurs in (\ref{PatchProof1}) and $\Te''_x = m^{U_{\zeta_{J,n}\zeta_{J',y}(0)}}$ occurs in (\ref{PatchProof2}). Using functoriality, in (\ref{PatchProof1}) we equivalently have $\tau\hs \circ m^{U_{\zeta_{J,n}(0)}} = m^{U_{\zeta_{J,n}\zeta_{J',y}(0)}} \circ ( \tau \hs \otimes \tau \hs)$ for an appropriate simplex $\tau$. As a result they agree. 

Next, it is clear from the drawing that $\si''_x(j)$ is the concatenation of $\si_{\be x}(j)$ and $\tau$. Moreover, for some vertex $b$, we have $\si''(x,b)$ as the concatenation of $\si(\be x ,\be b)$ and $\tau$, except in the case that $b$ is plugged as well. In the latter case, we can also pull $\Te_{\be b}$ in  (\ref{PatchProof1}) down to $\Te_{\be x}$ and obtain $m^{U_{\zeta_{J,n}\zeta_{J',y}(0)}} = \Te''_x = \Te''_b$ as in (\ref{PatchProof2}).  

\item let $\be x$ lie on top of $n$, but $\mathop \downarrow x  \notin \Image(\al)$.
$$\scalebox{0.8}{$\tikzfig{Proof_Patch_5}$}$$
Again, we can pull down $\Te_{\be x}$ in (\ref{PatchProof1}) past both functors $\tau\hs$ and $\si(y,n)\hs$ and obtain $m^{U_{\zeta_{J,\be \mathop \downarrow y}(0)}}= \Te''_x$. The same reasoning as before also holds for the functors $\si''_x(j)$ and $\si''(x,b)$ in (\ref{PatchProof2}) and its counterparts $\si_{\be x}(j)$ and $\si(\be x ,\be b)$ in (\ref{PatchProof1}).
\item The case where $x$ lies in the image of $(X',J')$ such that $\mathop \downarrow  x \notin \Image(\al)$, is analogous to the previous one.
\end{itemize}
This finishes the proof.
\end{proof}

\begin{theorem} \label{thmPGS}
We obtain a morphism of dg-operads
 $$\LL: \Patch_s \longrightarrow \End(s\CGS(\A),d_{0}).$$
\end{theorem}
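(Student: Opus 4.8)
The plan is to build on what has already been established: Construction~\ref{LL} assigns to each element of the \emph{colored} operad $\Patch$ a multilinear operation on the pieces of $\CGS(\A)$, and Proposition~\ref{LLComp} shows that these operations are compatible with the partial compositions $\circ_a$. What remains is (i) to assemble these into a single morphism out of the \emph{graded, uncolored} totalisation $\Patch_s$ landing in $\End(s\CGS(\A))$; (ii) to check $\Ss_n$-equivariance, keeping track of the Koszul signs produced by the suspended grading; and (iii) to check that $\LL$ intertwines the differential $\partial_D$ of $\Patch_s$ with the differential on $\End(s\CGS(\A),d_0)$ induced by $d_0$.

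For (i), an element $x\in\Patch_s(n)$ is by definition a sequence $((X,J)_{\underline q,q,\underline p})$ of elements of $\Patch((q_1,p_1),\dots,(q_n,p_n);(q,p))$ of constant degree $|x|=\sum_i(q_i+p_i-1)-(q+p-1)$. Applying Construction~\ref{LL} to each summand gives a $k$-linear map $\textbf{C}^{p_1,q_1}(\A)\otimes\cdots\otimes\textbf{C}^{p_n,q_n}(\A)\to\textbf{C}^{p,q}(\A)$; since $\textbf{C}^{p,q}(\A)$ is the degree-$(p+q)$ summand of $\CGS(\A)$, hence the degree-$(p+q-1)$ summand of $s\CGS(\A)$, assembling these over all colors of constant degree $|x|$ produces an element $\LL(x)\in\End(s\CGS(\A))(n)$ whose degree equals $|x|$ — this is exactly the normalisation built into the suspended grading on $\Patch_s$. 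That $\LL(x)$ is independent of the chosen tree representative $I$ of each $X$ is the content of the Lemma immediately following Construction~\ref{LL}.

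For (ii), the unsigned part of equivariance is immediate from Construction~\ref{LL}: acting by $\sigma\in\Ss_n$ on $(X,J)$ relabels its vertices, which relabels the input cochains $\te_i$ and changes nothing else. The signs match as well: transposing vertices $i$ and $j$ in $\Patch_s(n)$ produces $(-1)^{(q_i+p_i-1)(q_j+p_j-1)}$, which is precisely the Koszul sign of the symmetric monoidal category of graded $k$-modules underlying $\End(s\CGS(\A))$, since a $\textbf{C}^{p_i,q_i}$-cochain has degree $p_i+q_i-1$ in $s\CGS(\A)$. With equivariance established, Proposition~\ref{LLComp} gives $\LL(x\circ_a y)=\LL(x)\circ_a\LL(y)$ up to the Koszul signs attached to $\circ_a$, and these agree on both sides by the same degree bookkeeping; $\LL$ also sends the operadic unit to the identity. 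Hence $\LL\colon\Patch_s\to\End(s\CGS(\A))$ is a morphism of graded operads.

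For (iii), recall from Lemma~\ref{lempatchdg} that the differential on $\Patch_s$ is the operadic derivation $\partial_D(Y)=(D,1)\circ_1 Y-(-1)^{|Y|}\sum_i Y\circ_i(D,1)$ determined by the cochains $(D_q,1)\in\Patch((q,p);(q+1,p))$, where $D_q$ is the Hochschild element of \eqref{hochschild}. Unwinding Construction~\ref{LL} on $(D_q,1)$ yields three terms, coming from the three summands of $D_q$, and comparing them with the three summands of the Hochschild component $d_0$ of the GS-differential shows $\LL((D,1))=d_0$ as an element of $\End(s\CGS(\A))(1)$. Reconciling the signs of Construction~\ref{LL} with those in the definition of $d_0$ is the one genuinely delicate point, and I expect it to be the main obstacle — everything else is formal once Proposition~\ref{LLComp} is available. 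Granting it, since $\LL$ is a morphism of graded operads and the differential on $\End(s\CGS(\A),d_0)$ is $Y\mapsto d_0\circ_1 Y-(-1)^{|Y|}\sum_i Y\circ_i d_0$, applying $\LL$ to $\partial_D(Y)$ and using multiplicativity gives exactly this expression, so $\LL$ is a chain map. (In particular $d_0\circ d_0=\LL((D,1)\circ_1(D,1))=\LL(0)=0$, recovering that $d_0$ squares to zero on $s\CGS(\A)$.) This exhibits $\LL$ as a morphism of dg-operads.
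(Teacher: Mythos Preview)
Your proposal is correct and follows essentially the same approach as the paper: equivariance of $\LL$ on the colored level together with Proposition~\ref{LLComp} gives a morphism of graded operads after totalisation, and the dg-compatibility reduces to the single identity $\LL((D,1))=d_0$. The paper's proof records exactly these three points in a single sentence each, whereas you have spelled out the degree bookkeeping and the Koszul sign matching in more detail; in particular your flagging of the sign comparison between Construction~\ref{LL} applied to $(D_q,1)$ and the explicit formula for $d_0$ as ``the one genuinely delicate point'' is accurate, though the paper simply asserts this identity without further comment.
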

\begin{proof}
The map $\LL: \Patch \longrightarrow \End(\textbf{C}^{\bu,\bu}(\A))$ is clearly equivariant and thus it is a morphism of operads due to proposition $\ref{LLComp}$. Hence, the induced map $\LL: \Patch_s \longrightarrow \End(s\CGS(\A))$  is a morphism of graded operads. Moreover, it is a morphism of dg-operads as $\LL(D,1) = d_{0}$.
\end{proof}

\subsection{The morphism $\Quilt \longrightarrow \Patch_s$}\label{parquiltpatch}

In \cite{hawkins}, Hawkins defines a suboperad $\Quilt \subseteq \FS\hotimes \Bracee$ for which 
$\Quilt(n)$ is the free $k$-module generated by $(W,T) \in \FS(n)\times \Tree(n)$ such that 
\begin{enumerate}
\item $W= \ldots {u} \ldots {v} \ldots \Longrightarrow u \not>_{T} v;$
\item $W= \ldots {u} \ldots {v}\ldots {u} \ldots \Longrightarrow v \tre_{T} u.$
\end{enumerate}
Here, $deg(W,T) := deg(W)$ and the boundary operator is $\partial(W,T) := (\partial W,T)$.

Insightfully, elements of $\Quilt$ can also be drawn as a stacking of rectangles in the plane, as extensively explained in \cite[\S 3.2]{hawkins}. We will use $\Quilt$ in a fundamentally different way by switching the roles of its first and second component, and thus flipping the rectangles on their side. As such, we also draw the elements of $\Quilt$ on their side. We give an example.
\begin{vb}
We consider an example from \cite[Ex. 3.2]{hawkins} and flip it on its side as follows
$$\left( 14234, \quad \tikzfig{quilt_tree} \quad \right)  = \quad \tikzfig{quilt}$$ 
Note the double line above rectangle $4$: this reflects the fact that $3$ is not interposed, otherwise the corresponding word would be $142434$.
\end{vb}

By definition, we have $\Patch_s \subseteq \mNSOp_s \otimes_H \NSOp_{st}$. In this section, we will construct a morphism of operads $\Quilt \longrightarrow \Patch_s$ as a restriction of
$$\bar{\phi} \otimes_H \phi: \FS \otimes_H \Bracee \longrightarrow \mNSOp_s \otimes_H \NSOp_{st}.$$

\begin{lemma}
Let $Q=(W,T) \in \Quilt$, $X \in \Clr(W,\nth{q})$ and $I \in \Clr(T,\nth{p})$, then $(X,I) \in \Patch$.
\end{lemma}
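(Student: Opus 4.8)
The plan is to verify the two defining inequalities of $\Patch$ for the pair $(X,I)$, namely that $a <_I b \Rightarrow a \tre_X b$ and $a <_X b \Rightarrow b \tre_I a$, directly from the defining conditions of $\Quilt$ together with the descriptions of colorings. First I would recall that since $X \in \Clr(W,\nth{q})$, Definition \ref{coloringWord} gives us $<_X$ and $\tre_X$ in terms of the word $W$: namely $u <_X v$ iff $W = \ldots u \ldots v \ldots u \ldots$ and $u \tre_X v$ iff every occurrence of $u$ in $W$ is left of every occurrence of $v$. Similarly, since $I \in \Clr(T,\nth{p})$ is a coloring of the tree $T$ in the sense of \S\ref{parbraceNSOp}, the partial orders $<_I$ and $\tre_I$ on $\lh n \rh$ agree with $<_T$ (the vertical order $\leq_T$ restricted to strict, read appropriately) and $\tre_T$ (the horizontal order) of the underlying tree. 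So the statement reduces to a purely combinatorial claim about $W$ and $T$, which is exactly governed by the two conditions defining $\Quilt(n)$.

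Concretely, for the first inequality: suppose $a <_I b$, i.e. $a <_T b$, which by the convention means $a$ is strictly below $b$ in the tree (one is an ancestor of the other, say $a >_T b$ in Hawkins' convention — I would be careful to pin down which direction $<_I$ corresponds to, matching the indexed-tree conventions of \S\ref{parNSOp} where $I$ records edges $(u,v)$ with $u$ the parent). In any case, $a$ and $b$ are comparable in $\leq_T$. Then condition (1) of $\Quilt$, $W = \ldots u \ldots v \ldots \Rightarrow u \not>_T v$, forbids a caesura-free interleaving that would contradict $a \tre_X b$; more precisely I would argue that comparability of $a,b$ in $\leq_T$ combined with condition (1) forces all occurrences of the $\leq_T$-smaller one to lie on one side, which is precisely $a \tre_X b$. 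For the second inequality: suppose $a <_X b$, i.e. $W = \ldots a \ldots b \ldots a \ldots$, so $a$ is interposed around $b$. Then condition (2) of $\Quilt$, $W = \ldots u \ldots v \ldots u \ldots \Rightarrow v \tre_T u$, gives immediately $b \tre_T a$, i.e. $b \tre_I a$. So the second inequality is essentially a direct restatement of $\Quilt$-condition (2), and the first is a slightly less immediate consequence of $\Quilt$-condition (1) together with the no-interlacing axiom of $\FS$.

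The main obstacle I expect is bookkeeping around conventions: matching the direction of $<_I$ (the order $<$ on vertices of an indexed tree coming from $\leq_T$) against Hawkins' $<_T$ and $>_T$, and making sure the translation between "coloring of a tree" $\Clr(T,\nth{p})$ and the tree orders is the identity on the relevant partial orders. I would handle this by invoking the already-established fact (implicit in \S\ref{parbraceNSOp} and Lemma \ref{partialorders}) that for $I \in \Clr(T,\nth{p})$ the orders $<_I,\tre_I$ on $\lh n\rh$ coincide with $<_T,\tre_T$, and similarly that $\Clr(W,\nth{q})$ is nonempty only when the $W$-derived orders match $<_X,\tre_X$; once those identifications are in place, the proof is a two-line deduction from the definition of $\Quilt(n)$. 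Thus the write-up is short: state the identifications, then for each of the two $\Patch$-conditions translate both sides into conditions on $W,T$ and cite the corresponding $\Quilt$-condition (with condition (1) also using no-interlacing of $W$).

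\begin{proof}
Write $Q = (W,T)$ with $W \in \FS(n)$ and $T \in \Tree(n)$ satisfying the two defining conditions of $\Quilt(n)$. Since $X \in \Clr(W,\nth{q})$, Definition \ref{coloringWord} identifies the partial orders $<_X$ and $\tre_X$ on $\lh n \rh$ with the orders read off from $W$: for $u \neq v$ in $\lh n \rh$,
$$u <_X v \iff W = \ldots u \ldots v \ldots u \ldots, \qquad u \tre_X v \iff \text{every occurrence of } u \text{ in } W \text{ is left of every occurrence of } v.$$
Likewise, since $I \in \Clr(T,\nth{p})$, the orders $<_I$ and $\tre_I$ on $\lh n \rh$ coincide with the tree orders $<_T$ and $\tre_T$ of $T$.

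We check the two conditions of $\Patch$. For the first, suppose $a <_I b$, i.e. $a <_T b$. We must show $a \tre_X b$. Assume not; then by the description of $\tre_X$ above there are occurrences of $a$ and $b$ in $W$ with some occurrence of $b$ left of some occurrence of $a$, that is, $W = \ldots b \ldots a \ldots$ as a subword in that order, while also (since $a$ and $b$ are distinct vertices both appearing in $W$) there is an occurrence of $a$ somewhere. If $W = \ldots a \ldots b \ldots$ as well, then condition (1) of $\Quilt$ applied to this pattern gives $a \not>_T b$; but combined with $a <_T b$ this is consistent, so we need the other pattern. If instead every occurrence of $a$ lies left of the displayed occurrence of $b$ that we used, we would have $a \tre_X b$ after all; so there is an occurrence of $a$ to the right of an occurrence of $b$ and, by nondegeneracy and no-interlacing of $W$, the occurrences of $a$ and $b$ form a pattern $W = \ldots b \ldots a \ldots$ with no $b$ to the right of that $a$, whence reading the reverse pattern and applying condition (1) of $\Quilt$ in the form $W = \ldots b \ldots a \ldots \Rightarrow b \not>_T a$, together with the comparability $a <_T b$, forces a contradiction with $a <_T b$. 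Hence $a \tre_X b$.

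For the second condition, suppose $a <_X b$. By the description of $<_X$ this means $W = \ldots a \ldots b \ldots a \ldots$. Condition (2) of $\Quilt$ applied to this pattern (with $u = a$, $v = b$) gives $b \tre_T a$, that is, $b \tre_I a$. This is exactly the required inequality. Therefore $(X,I) \in \Patch((q_1,p_1),\ldots,(q_n,p_n);(q,p))$.
\end{proof}
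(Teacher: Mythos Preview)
Your approach is the same as the paper's and the second condition is handled exactly as in the paper. The first condition is also ultimately correct, but you have made it far more complicated than necessary. Once you have $a <_T b$ and assume $\lnot(a \tre_X b)$, you immediately obtain $W = \ldots b \ldots a \ldots$; applying $\Quilt$-condition~(1) directly to this pattern yields $b \not>_T a$, which contradicts $a <_T b$. That is the entire argument, and it is precisely what the paper does in one line. Your case analysis (whether or not $W = \ldots a \ldots b \ldots$ also holds), your appeal to nondegeneracy and no-interlacing, and the sentence ``If instead every occurrence of $a$ lies left of the displayed occurrence of $b$ \ldots'' are all unnecessary --- the last of these is in fact vacuous, since you have already exhibited an occurrence of $a$ to the right of that $b$. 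Strip all of that out and your proof becomes identical to the paper's.
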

\begin{proof}
Let $u, v \in \lh n \rh$, if $u<_{I} v$, then $u<_{T}v$ and thus $W \neq \ldots {v} \ldots {u} \ldots $ and thus every occurrence of $u$ in $W$ is left of every occurrence of $v$ in $W$. Hence, $u \tre_{X} v$.

The other way around, if $u <_{X} v$, then $W = \ldots {u} \ldots {v} \ldots {u} \ldots$ and thus $v \tre_{T} u$ which is equivalent to $v \tre_{I} u$. 
\end{proof}

We obtain a morphism of graded operads
$$\bar{\phi}\otimes_H \phi : \Quilt \longrightarrow \Patch_s$$
defined as 
$$(\bar{\phi}\otimes_H \phi)((q_1,p_1),\ldots,(q_n,p_n))(W,T) = \sum_{\substack{ X\in \Clr(W,\nth{q}) \\ I \in \Clr(T,\nth{p})}} \sgn_{W}(X)\sgn_{T}(I) (-1)^{\si} (X,I)$$
where the sign $(-1)^{\si}$ is defined as the Koszul sign obtained from switching 
$$\nth{q},p_{1}-1,\ldots,p_{n}-1 \leadsto q_{1},p_{1}-1,\ldots,q_{n},p_{n}-1$$
This is the consequence of the Hadamard product of two graded operads
$$\Patch((q_{1},p_{1}),\ldots,(q_{n},p_{n});(q,p)) \sub \mNSOp(\nth{q};q) \otimes \NSOp(\nth{p};p)$$
where we have switched the order of the inputs. 

Note in particular that this sign corresponds to the sign defined in \cite[\S 4.7]{hawkins} and that we write $\sgn_Q(X,J) := \sgn_W(X) \sgn_T(J) (-1)^{\si}$.

As a direct consequence of Lemma \ref{lempatchdg} we have the following. 

\begin{prop}\label{propQP}
We have a morphism of dg-operads 
$$ \bar{\phi} \hotimes \phi : (\Quilt,\partial) \longrightarrow (\Patch_s,\partial_{(D,1)})$$
\end{prop}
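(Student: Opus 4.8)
The plan is to assemble Proposition~\ref{propQP} from the pieces already in place, treating it essentially as a bookkeeping statement: the morphism of dg-operads is obtained by restricting the Hadamard product of the two dg-operad morphisms $\bar\phi$ and $\phi$ from Theorems~\ref{thmFSmNSOp} and \ref{thmbraceNSOp}. First I would recall that the Hadamard product $\bar\phi\hotimes\phi$ is a morphism of graded operads $\FS\hotimes\Bracee\longrightarrow\mNSOp_{st}\hotimes\NSOp_s$, this being a formal consequence of functoriality of the Hadamard product on graded operads together with the sign adjustment $(-1)^\si$ coming from reordering the inputs $\nth{q},p_1-1,\dots,p_n-1 \leadsto q_1,p_1-1,\dots,q_n,p_n-1$ (exactly as spelled out just before the statement). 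The content that needs checking is only that this morphism restricts to the sub-(dg-)operads, i.e.\ that $(\bar\phi\hotimes\phi)(\Quilt)\subseteq\Patch_s$, and that it intertwines the two differentials $\partial$ on $\Quilt$ and $\partial_{(D,1)}$ on $\Patch_s$.

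The containment $(\bar\phi\hotimes\phi)(\Quilt)\subseteq\Patch_s$ is the preceding Lemma: for $Q=(W,T)\in\Quilt$, every summand $(X,I)$ of $(\bar\phi\hotimes\phi)(Q)$ has $X\in\Clr(W,\nth{q})$ and $I\in\Clr(T,\nth{p})$, and the Lemma shows $(X,I)\in\Patch$; moreover the suspended/standard gradings match since $\deg(W,T)=\deg(W)$ and by the Remark after Definition~\ref{coloringWord} each coloring $X\in\Clr(W,\nth q)$ sits in the right component of $\mNSOp_{st}$, so the sequences produced have constant degree and lie in $\Patch_s(n)$. That $\bar\phi\hotimes\phi$ is a morphism of \emph{graded} operads on $\Quilt$ then follows because it is the restriction of a morphism of graded operads and both the source and target are closed under operadic composition (the latter by the Lemma of \S\ref{parP} that $\Patch$ is a suboperad of $\mNSOp\hotimes\NSOp$). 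This is precisely what was asserted in the display defining $(\bar\phi\hotimes\phi)((q_1,p_1),\dots)(W,T)$, whose well-definedness rests on Theorems~\ref{thmFSmNSOp} and \ref{thmbraceNSOp}.

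It remains to promote this to a morphism of \emph{dg}-operads. On $\Quilt$ the differential is $\partial(W,T)=(\partial W, T)$, i.e.\ it acts only on the first tensor factor. On $\Patch_s\subseteq(\mNSOp_{st}\hotimes\NSOp_s,(\partial_D,\mathrm{Id}))$ the differential $\partial_{(D,1)}$ likewise acts only on the first factor, via $\partial_D$. Since $\bar\phi\colon\FS\to\mNSOp_{st}$ is a morphism of dg-operads (Theorem~\ref{thmFSmNSOp}, i.e.\ $\partial_D\circ\bar\phi=\bar\phi\circ\partial$) and $\phi\colon\Bracee\to\NSOp_s$ is a morphism of operads compatible with the trivial differential on $\Bracee$, the Hadamard product $\bar\phi\hotimes\phi$ commutes with these differentials on the nose: on a generator $(W,T)$ one has $(\bar\phi\hotimes\phi)(\partial(W,T))=(\bar\phi\hotimes\phi)(\partial W,T)=(\partial_D\bar\phi(W))\hotimes\phi(T)=\partial_{(D,1)}\big((\bar\phi\hotimes\phi)(W,T)\big)$, where the middle equality is Proposition~\ref{differential}/Theorem~\ref{thmFSmNSOp} applied factorwise; the Koszul sign $(-1)^\si$ is unaffected because $\partial_D$ preserves the colors $\nth q$. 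Finally, by Lemma~\ref{lempatchdg}, $\Patch_s$ is a dg-suboperad of $(\mNSOp_{st}\hotimes\NSOp_s,(\partial_D,\mathrm{Id}))$, so restricting $\bar\phi\hotimes\phi$ to $\Quilt$ indeed lands in the dg-operad $(\Patch_s,\partial_{(D,1)})$ and is a morphism of dg-operads.

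The only genuinely delicate point — and hence the expected main obstacle — is the sign compatibility: one must be sure that the Koszul sign $(-1)^\si$ introduced by reordering inputs in the Hadamard product is \emph{compatible with operadic composition} in $\Patch_s$ (so that $\bar\phi\hotimes\phi$ really is a morphism and not just a degreewise linear map), and that it matches the sign $\sgn_Q(X,J)=\sgn_W(X)\sgn_T(J)(-1)^\si$ used by Hawkins in \cite[\S4.7]{hawkins}. This is flagged in the remark immediately preceding the statement, and once one observes that $(-1)^\si$ depends only on the tuple of degrees $(q_i-1, p_i-1)$ and transforms under $\circ_a$ exactly as a Koszul sign must, everything goes through; the remaining verifications (equivariance, closure under $\circ_a$) are the routine computations already recorded in the Lemma preceding the statement and in Lemma~\ref{signsComposition}.
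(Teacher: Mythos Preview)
Your proposal is correct and follows essentially the same approach as the paper. The paper's own argument is extremely terse---it simply records that the statement is a direct consequence of Lemma~\ref{lempatchdg}, relying on the preceding discussion (the Lemma showing colorings of quilts land in $\Patch$, the explicit description of $\bar\phi\hotimes\phi$ as a graded morphism, and Theorem~\ref{thmFSmNSOp} for the dg compatibility of $\bar\phi$)---whereas you have unpacked each of these ingredients explicitly; your extended discussion of the Koszul sign $(-1)^\si$ is more cautious than strictly necessary, but not wrong.
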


\begin{cor}\label{correp1quilt}
We have a morphism of dg-operads
$$R:=\LL \circ (\bar{\phi}\hotimes \phi) : \Quilt \longrightarrow \End(s\CGS(\A),d_{0})$$
\end{cor}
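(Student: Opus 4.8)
The plan is simply to compose the two dg-operad morphisms already established. By Proposition~\ref{propQP} we have a morphism of dg-operads $\bar\phi\hotimes\phi:(\Quilt,\partial)\longrightarrow(\Patch_s,\partial_{(D,1)})$, and by Theorem~\ref{thmPGS} we have a morphism of dg-operads $\LL:(\Patch_s,\partial_{(D,1)})\longrightarrow\End(s\CGS(\A),d_0)$, where the latter uses that $\Patch_s$ carries the differential $\partial_{(D,1)}$ inherited from $(\mNSOp_{st}\hotimes\NSOp_s,(\partial_D,\mathrm{Id}))$ via Lemma~\ref{lempatchdg} and that $\LL(D,1)=d_0$. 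First I would note that the composite $R:=\LL\circ(\bar\phi\hotimes\phi)$ is a morphism of graded operads, being a composite of morphisms of graded operads. Then I would check compatibility with differentials: for $Q\in\Quilt(n)$ we have $R(\partial Q)=\LL\bigl((\bar\phi\hotimes\phi)(\partial Q)\bigr)=\LL\bigl(\partial_{(D,1)}(\bar\phi\hotimes\phi)(Q)\bigr)$ by Proposition~\ref{propQP}, and the latter equals $\partial_{d_0}\bigl(\LL(\bar\phi\hotimes\phi)(Q)\bigr)=\partial_{d_0}\bigl(R(Q)\bigr)$ since $\LL$ intertwines $\partial_{(D,1)}$ with the commutator differential $\partial_{d_0}$ on $\End(s\CGS(\A))$ by Theorem~\ref{thmPGS}. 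Hence $R$ is a morphism of dg-operads.

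There is essentially no obstacle here: all the substantive work has been done in the construction of $\LL$ (Theorem~\ref{thmPGS}, resting on Proposition~\ref{LLComp}) and in the verification that $\bar\phi$ and $\phi$ are dg-operad maps (Theorems~\ref{thmFSmNSOp} and~\ref{thmbraceNSOp}) together with the compatibility of colorings under composition (Lemmas~\ref{uniqueword}, \ref{uniqueColorings}, \ref{signsComposition}) that feeds into Proposition~\ref{propQP}. The one point worth stating explicitly is that the differential on the target $\End(s\CGS(\A),d_0)$ is the one induced by $d_0$, matching the convention under which $\LL$ is a dg-map, so that the composite indeed realizes the full Hochschild-type differential on $\Quilt$ on the level of the GS complex through $R$. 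The statement thus follows formally, and I would keep the proof to the two lines indicated above.

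\begin{proof}
The map $R=\LL\circ(\bar\phi\hotimes\phi)$ is a composite of morphisms of graded operads (Proposition~\ref{propQP} and Theorem~\ref{thmPGS}), hence a morphism of graded operads. It is compatible with the differentials: for $Q\in\Quilt$,
$$R(\partial Q)=\LL\bigl(\partial_{(D,1)}(\bar\phi\hotimes\phi)(Q)\bigr)=\partial_{d_0}\bigl(\LL(\bar\phi\hotimes\phi)(Q)\bigr)=\partial_{d_0}(R(Q)),$$
where the first equality uses Proposition~\ref{propQP} and the second uses that $\LL$ intertwines $\partial_{(D,1)}$ with the commutator differential determined by $d_0$ (Theorem~\ref{thmPGS}, $\LL(D,1)=d_0$). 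Therefore $R$ is a morphism of dg-operads.
\end{proof}
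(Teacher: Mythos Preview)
Your proposal is correct and follows exactly the same approach as the paper: the corollary is obtained by composing the dg-operad morphisms from Proposition~\ref{propQP} and Theorem~\ref{thmPGS}. The paper's proof is simply the one-liner ``Immediate from Theorem~\ref{thmPGS} and Proposition~\ref{propQP}'', and your version just unpacks this composition explicitly.
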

\begin{proof}
Immediate from Theorem \ref{thmPGS} and Proposition \ref{propQP}.
\end{proof}

Our action of $\Quilt$ on the GS-complex of a prestack is orthogonal to the action constructed in \cite[Thm. 4.26]{hawkins} in the case of presheaves, and thus also new for the latter case. This can be interpreted in a geometric sense: our action encodes a quilt $Q=(W,T)$ as a vertical patchwork according to $W$ and a horizontal patchwork according to $T$. In Hawkins' action their roles are reversed, where the role of the multiplication is filled in by the identity $1^{u,v} : v\st u\st = (uv)\st$. This does not translate to the case of prestacks due to the occurring twists $c^{u,v}: v \st u \st \longrightarrow (uv)^{\st}$.

\section{Incorporating Twists}\label{parc}

The morphism $R:\Quilt \longrightarrow \End(s\CGS(\A))$ from Corollary \ref{correp1quilt} only involves the multiplication $m$ and the functors $f$ of the data of a prestack $(\A,m,f,c)$. In this section, we will incorporate the twists $c$ by adding a formal element with certain relations, resulting in the bounded powerseries operad $\Quiltb[[c]]$. 
In \S \ref{parquiltbcCGS}, we extend $R$ above to a morphism $R_c: \Quiltb[[c]] \longrightarrow \End(s\CGS(\A))$ (see Theorem \ref{thmRc}). In Hawkins' approach to the presheaf case, the initial action of $\Quilt$ on $\End(s\CGS(\A))$, which only involves the restriction maps $f$, is later extended in order to incorporate the multiplications $m$. As far as the structure of both approaches goes, our operad $\Quiltb[[c]]$ is the counterpart of the operad $\mathrm{mQuilt}$ from \cite[Def. 5.2]{hawkins}.

In \cite[\S 7.1]{hawkins}, Hawkins constructs a morphisms $L_{\infty} \longrightarrow \Quilt$ (see \S \ref{parLinftyquilt}). In \S \ref{parLinftyquilt}, we establish a more involved morphism $L_{\infty} \longrightarrow \Quiltb[[c]]$ (see Theorem \ref{maintheorem}) by extending to an infinite series of higher components incorporating the element $c$. 

Putting Theorems \ref{maintheorem} and \ref{thmRc} together, we have thus endowed $s\CGS(\A)$ with an $L_{\infty}$-structure. In the case of presheaves, this coincides on reduced and normalised cochains with the $\Linf$-structure from \cite[Thm. 7.13]{hawkins}. 

In the final section \ref{pardef} we briefly discuss the relation of this structure with the deformation theory of the prestack $\A$.

\subsection{Powerseries operads}

In order to obtain an $L_{\infty}$-structure incorporating twists, we will make use of operads of formal power series.

\begin{mydef}
Let $\Oo$ be a graded operad, then define $\Oo[x]$ as the graded operad generated by $\Oo$ and an element $x$ of degree $t$ and define the subspaces
$$\Oo[x](n,r) := \{ \ga \in \Oo[x](n) : \ga \text{ has } r \text{ occurrences of } x \} \sub \Oo[x](n)$$
which is well-defined as there are no relations on $x$. Define
$$\Oo[[x]](n) := \prod_{r\geq 0} \Oo[x](n,r)$$
with component-wise $\Ss_{n}$-action and write their elements as $\sum_{r\geq0} Q_{r}$ for $Q_{r} \in \Oo[x](n,r)$. 
For every $1\leq k \leq n$ the composition of $\Oo[x]$ descends to a map
$$\Oo[x](n,r)\otimes \Oo[x](n,s) \longrightarrow \Oo[x](n,r+s)$$
which extends to a composition map
$$(\sum_{r\geq 0} Q_{r}) \circ_{k} (\sum_{s\geq 0} P_{s} ) := \sum_{t\geq 0} ( \sum_{i+j=t} Q_{i} \circ_{k} P_{j})$$
We call an element $\sum_{r\geq 0} Q_{r} \in \Oo[[x]]$ \emph{bounded} if the set $\{ deg(Q_{r}) : r \geq 0\}\sub \Z$ is bounded. Let $\Oo_{b}[[x]]$ be the $\Ss$-submodule of bounded series which is graded by the series with coefficients of constant degree.
\end{mydef}

\begin{lemma}
\begin{enumerate}
\item $\Oo[[x]]$ is an operad.
\item $\Oo_{b}[[x]]$ is a graded suboperad of $\Oo[[x]]$.
\item We have a sequence of injective operad morphisms
$$\Oo \hookrightarrow \Oo[x] \hookrightarrow \Oo_{b}[[x]] \hookrightarrow \Oo[[x]] $$
\end{enumerate}
\end{lemma}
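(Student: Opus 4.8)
The plan is to verify each of the three claims by unwinding the definitions, with the main work being the well-definedness of the composition on $\Oo[[x]]$ and the verification that it satisfies the operad axioms (associativity and equivariance); once that is in hand, the statements about $\Oo_b[[x]]$ and the chain of inclusions are essentially formal.

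First I would establish (1). The key point is that the formula
$$(\textstyle\sum_{r\geq 0} Q_{r}) \circ_{k} (\sum_{s\geq 0} P_{s} ) := \sum_{t\geq 0} \big( \sum_{i+j=t} Q_{i} \circ_{k} P_{j}\big)$$
is well-defined: for each fixed $t$ the inner sum is finite (it ranges over the finitely many pairs $(i,j)$ with $i+j=t$), and since $Q_i \circ_k P_j \in \Oo[x](n,i+j) = \Oo[x](n,t)$ by the observation that $\circ_k$ adds the numbers of occurrences of $x$, the $t$-th component indeed lands in $\Oo[x](n,t)$, so the right-hand side is a genuine element of $\prod_{t\geq 0}\Oo[x](n,t) = \Oo[[x]](n)$. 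Equivariance of $\circ_k$ on $\Oo[[x]]$ follows componentwise from equivariance on each $\Oo[x]$, since the $\Ss_n$-action is componentwise. For the associativity/compatibility axioms of an operad (the two types of nested compositions $(\gamma \circ_k \delta)\circ_l \eps$ versus $\gamma \circ_k(\delta \circ_{l'}\eps)$, and $(\gamma\circ_k\delta)\circ_l\eps$ versus $(\gamma\circ_l\eps)\circ_{k'}\delta$ for $l$ outside the range of $\delta$), I would argue componentwise: fixing the total number of $x$'s on each side and comparing the finite sums, the identity reduces on each summand to the corresponding operad axiom in $\Oo[x]$, and the reindexing of the finite sums matches up on both sides. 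This is routine bookkeeping.

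Next, (2): $\Oo_b[[x]]$ is by definition the $\Ss$-submodule of series $\sum_r Q_r$ for which $\{\deg(Q_r) : r\geq 0\}$ is bounded. I would check it is closed under $\circ_k$: if $\{\deg(Q_r)\}$ and $\{\deg(P_s)\}$ are bounded, then since $\circ_k$ is a degree-preserving operation on the graded operad $\Oo[x]$ one has $\deg(Q_i\circ_k P_j) = \deg(Q_i)+\deg(P_j)$, so $\{\deg(Q_i\circ_k P_j) : i,j\}$ is bounded, and hence so is the set of degrees of the (nonzero) components of the composite. It is also closed under the $\Ss_n$-action (the action preserves degree), and contains the unit. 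The grading on $\Oo_b[[x]]$ is by series all of whose nonzero coefficients have one fixed degree $d$; composition of a degree-$d$ series with a degree-$d'$ series lands in degree $d+d'$, so this is a grading of operads. Thus $\Oo_b[[x]]$ is a graded suboperad of $\Oo[[x]]$.

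Finally, (3): the inclusion $\Oo\hookrightarrow\Oo[x]$ is the structure map of the generated operad, which is injective because adjoining a free generator with no relations does not collapse anything (formally, $\Oo$ is a retract of $\Oo[x]$ via the operad map $x\mapsto 0$, or one invokes that $\Oo[x]=\bigoplus_r \Oo[x](-,r)$ with $\Oo[x](-,0)=\Oo$). The map $\Oo[x]\hookrightarrow\Oo_b[[x]]$ sends a polynomial (finite sum) to itself viewed as a series with finitely many nonzero components, which trivially has bounded degree set and is injective; it is an operad morphism since the composition on $\Oo[[x]]$ restricts to the original composition on finite sums. The map $\Oo_b[[x]]\hookrightarrow\Oo[[x]]$ is the inclusion of the submodule from (2), already shown to be an operad morphism. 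Composing, all three maps are injective operad morphisms, as claimed.

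I expect the main obstacle to be purely organizational: writing the operad-axiom verification in (1) cleanly, since one must keep track simultaneously of the $x$-occurrence grading (which indexes the product) and the reindexing of the finite convolution sums under nested compositions. There is no conceptual difficulty — every identity reduces componentwise to an axiom already known in $\Oo[x]$ — but care is needed to make sure the finite sums on the two sides are reindexed by the same bijection.
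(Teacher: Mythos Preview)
Your proposal is correct and matches the paper's approach: the paper's own proof reads in its entirety ``These are straight-forward computations,'' and your write-up simply supplies those computations in detail. There is nothing to add or correct.
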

\begin{proof}
These are straight-forward computations.
\end{proof}

We call $\Oo[[x]]$ the \emph{operad of powerseries with coefficients in} $\Oo$ and $\Oo_{b}[[x]]$ the \emph{operad of bounded powerseries with coefficients in} $\Oo$.

\begin{mydef}\label{defc}
Consider $\Quilt[x]$, $\Quilt_b[[x]]$ and $\Quilt[[x]]$ for $x$ a $0$-ary element $x\in \Quilt(0)$ of degree $-1$, then let $\Quilt[c]$, $\Quilt_b[[c]]$ and $\Quilt[[c]]$ be their respective quotients under the following relations
\begin{enumerate}
\item $\partial(c) =0$ \label{nattrans}
\item $(12, \tikzfig{def_c}) \circ_{1} c \circ_{1} c = 0$ \label{cocycle}
\item $(W,T)\circ_{i} c = 0$ if $i$ has more than two children in $T$ or $i$ is repeated in $W$ (that is, it has a child in $W$).\label{form}
\end{enumerate}
\end{mydef}
\begin{opm}
In Definition \ref{defc}, \eqref{nattrans} determines that $c$ encodes a natural transformation, \eqref{cocycle} embodies the cocycle condition and \eqref{form} determines the form of $c$. The letter $c$ will always stand for the twist subject to its relations, and should not be confused with a free variable.
\end{opm}

On inspection of $\mQuilt$ from \cite[Def. 5.2]{hawkins}, we see that our conditions on $c$ are a subset of those imposed on $m$ in $\mQuilt$. Hence, we obtain a morphism $\Quilt[c] \longrightarrow \mQuilt$ sending $c$ to $m$.

\subsection{The morphism $\Linf \longrightarrow \Quilt$}\label{parLinftyquilt}

In \cite[Thm 7.8]{hawkins} a morphism $\Linf\longrightarrow \Quilt: l_n \longmapsto L_n^0$ is defined by setting
$$L_{n}^{0} := \sum_{\substack{Q \in \Quilt(n)\\ deg(Q) =n-2}} \sgn(Q) \; Q $$
In particular, this means that for every $n\geq 2$ the equation
\begin{equation}
0 = \partial L_{n}^{0} + \sum_{\substack{p+q = n+1 \\ p,q\geq 2}}  \sum_{\si \in Sh_{p-1,q}} (-1)^{(p-1)q} (-1)^{\si} (L_{p}^{0} \circ_{p} L_{q}^{0})^{\si\inv}  \label{Linf}
\end{equation}
holds. 

An important feature which we will need, is that we can write $L_{n}^{0}$ as the antisymmetrization of elements $P_{n}^{0}$. Namely, we set
$$P_{n}^{0} := \sum_{\substack{Q \in \Quilt(n)\\ deg(Q) = n-2 \\ Q \text{ labelled in } \downarrow \text{order}}} (-1)^{1+\frac{n(n-1)}{2}} Q$$
then we have
$$L_{n}^{0} = \sum_{\si \in \Ss_{n}} (-1)^{\si} (P_{n}^{0})^{\si}.$$
In fact, the $\Linf$-relations translate to the following
\begin{equation}
\partial P_{n}^{0} + \sum_{\substack{p+q = n+1 \\ p,q\geq 2}}  \sum_{j=1}^{p} (-1)^{(p-1)q+ (p-j)(q-1)}P_{p}^{0} \circ_{j} P_{q}^{0} \in \langle Q -Q^{\si} | \si \in \Ss_{n} \rangle \sub \Quilt(n)\label{partialP}
\end{equation} 
where $\langle - \rangle$ denotes `free $k$-module generated by$-$'.

\subsection{The morphism $\Linf \longrightarrow \Quiltb[[c]]$}\label{parLinftyquiltbc}

Next we will define more involved $\Linf$-operations incorporating $c$.

\begin{mydef}\label{P}
For $n+r\geq 2$ we define
\begin{align*}
 P^{r}_{n} &:= \sum_{\substack{1\leq y_{1} < \ldots < y_{r} \leq n+r }}  (-1)^{\fromto{y}{r}} P_{n+r}^{0} \circ_{y_{1}}  c \circ_{y_{2}-1} \hdots \circ_{y_{r}-r+1} c
 \end{align*}
 where $(-1)^{\fromto{y}{r}}$ denotes the sign of the $(r,n)$-shuffle defined by $(\fromto{y}{r})$. Using these we set
 $$L_{n}^{r} := \sum_{\si \in \Ss_{n}} (-1)^{\si}\left(P^{r}_{n}\right)^{\si}$$
 \end{mydef}
 \begin{opm}
 Note that $L_{n}^{r}$ live in $\Quilt[c]$.
 \end{opm}
 Let us compute $P_n^r$ for some low $n$ and $r$.
 \begin{vb}
 In case no elements $c$ are added, we obtain the original $P_n^0$
 \begin{align*}
P_2^0 = \quad \tikzfig{P0_2}\;, \quad P_3^0= \quad \tikzfig{P0_3}\;, \quad P_4^0 = - \quad \tikzfig{P0_4_1}\quad - \quad \tikzfig{P0_4_2} \quad - \quad \tikzfig{P0_4_3}\;, \quad \ldots
\end{align*}
 Similar to how we drew elements from $\mNSOp$ as trees with vertices plugged by $m$, elements of $\Quilt[c]$ can be drawn as quilts with rectangles plugged by $c$. For example, we have
  $$ P_1^1 = \quad \tikzfig{P21_1}\quad - \quad \tikzfig{P21_2} $$
  or 
  \begin{align*}
   P_2^2 &= \quad \tikzfig{P42_1}\quad  - \quad \tikzfig{P42_2}\quad - \quad \tikzfig{P42_3} \quad - \quad \tikzfig{P42_4}\quad - \quad \tikzfig{P42_5}
  \end{align*}
 Note that depending on where we plug the elements $c$ a sign is added.
 \end{vb}
 
This enables us to define the following.

\begin{mydef}
$$L_{n} := \sum_{r\geq 0} L_{n}^{r} \text{ for } n\geq 2 \text{ and } L_{1} := \sum_{r\geq 1} L_{1}^{r}$$
Set 
$$\partial' := \partial + \partial_{L_{1}}$$
where 
$$\partial_{L_{1}}(A) := L_{1}\circ_{1} A - (-1)^{|A|} \sum_{i} A \circ_{i} L_{1}$$
(a derivation by an element) which will be the new differential.
\end{mydef}
\begin{opm}
Note that $L_{n}$ are bounded because their components have constant degree $n-2$. Hence, $L_{n}$ live in $\Quiltb[[c]]$.
\end{opm}
The main theorem of this section is the following.

\begin{theorem}\label{maintheorem}
The map
$$\Linf  \longrightarrow (\Quiltb[[c]],\partial') : l_{n} \longmapsto L_{n}$$
defines a morphism of dg-operads.
\end{theorem}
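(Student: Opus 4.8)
The plan is to verify that the map $l_n \longmapsto L_n$ respects the $L_\infty$-relations, namely that for every $n \geq 1$
$$
\partial' L_n + \sum_{\substack{p+q=n+1\\ p,q \geq 1}} \sum_{\sigma \in Sh_{p-1,q}} (-1)^{(p-1)q}(-1)^\sigma (L_p \circ_p L_q)^{\sigma\inv} = 0
$$
holds in $\Quiltb[[c]]$, where the $p=1$ or $q=1$ terms are absorbed into the new differential $\partial'$. Since everything in sight is an antisymmetrization, I would work at the level of the elements $P_n^r$, just as Hawkins does with $P_n^0$, and show that the sum over $r$ of the corresponding ``pre-Jacobi'' expressions lands in the subspace $\langle Q - Q^\sigma\rangle$ of non-antisymmetrized noise, generalizing equation \eqref{partialP}. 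Concretely, I would fix the total number $N$ of inserted copies of $c$ and collect, from $\partial' L_n$ and from all the composites $L_p \circ_p L_q$, exactly those terms carrying $N$ occurrences of $c$; the claim is that this graded piece vanishes.

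The key steps, in order: First, expand $\partial' = \partial + \partial_{L_1}$ and use $\partial(c) = 0$ (relation \eqref{nattrans}) together with the Leibniz rule to push $\partial$ through the insertions $P_{n+r}^0 \circ_{y_1} c \circ \cdots \circ c$, so that $\partial P_n^r$ is a signed sum of $(\partial P_{n+r}^0)$ with the same $c$'s inserted. Second, apply the known relation \eqref{partialP} for $P_{n+r}^0$ to rewrite $\partial P_{n+r}^0$ as a sum of composites $P_p^0 \circ_j P_q^0$ modulo $\langle Q - Q^\sigma\rangle$. Third — and this is the combinatorial heart — I must match, with signs, the $c$-decorated composites $P_a^{r_1} \circ_j P_b^{r_2}$ (which, after expanding the $P^{r_i}$'s into $P^0$'s with inserted $c$'s and using the operad axioms to commute the $\circ_j$ with the $c$-insertions) against the $c$-decorated version of $P_p^0 \circ_j P_q^0$. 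Here a $c$ inserted into $P_a^{r_1}$ at a slot that gets substituted into may need to be tracked through the extension maps $(\alpha,\beta)$ of $\Bracee$/$\FS$; one must check that relation \eqref{form} — $c$ has no child and the vertex it plugs has at most two children — is preserved and that no new illegal configurations arise. Fourth, I would handle the genuinely new phenomenon: the contribution of $\partial_{L_1}$ and of the $p=1$ (resp.\ $q=1$) composites $L_1 \circ_1 L_q$ (resp.\ $L_p \circ_p L_1$). Since $L_1 = \sum_{r\geq 1} L_1^r$ starts at $r=1$, each such term already carries at least one $c$, and these are precisely the terms needed to cancel the ``boundary'' pieces coming from $\partial$ hitting a configuration where deleting a letter of $W$ merges two rectangles separated only by a $c$ — this is the $\Quilt[c]$-analogue of how $\mathrm{mQuilt}$'s differential interacts with $m$. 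Fifth, assemble: the sign bookkeeping should be organized exactly as in Hawkins' proof of \eqref{Linf}/\eqref{partialP}, with the extra $(-1)^{y_1,\ldots,y_r}$ shuffle signs from Definition \ref{P} conspiring with the $(p-j)(q-1)$ signs so that the whole $N$-graded piece is a telescoping cancellation up to $\langle Q - Q^\sigma\rangle$, which dies upon antisymmetrization.

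The main obstacle I expect is step three: verifying that inserting the $c$'s commutes correctly, \emph{with the right signs}, with the operadic composition $\circ_j$ after unfolding $P^r$ in terms of $P^0$. The delicate point is that a copy of $c$ might sit at a slot $y_i$ which, under the extension $(\alpha,\beta)$ defining $P_p^0 \circ_j P_q^0$, either lies in the part that is substituted into (so it must be re-indexed via $\alpha$) or in the ambient part (re-indexed via $\gamma = \beta\inv$); the shuffle sign $(-1)^{y_1,\ldots,y_r}$ must split compatibly across this decomposition, and one must confirm that the cocycle relation \eqref{cocycle} is exactly what is needed when two inserted $c$'s become adjacent after a composition. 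I would isolate this as a standalone combinatorial lemma about $\Quilt[c]$ (``insertion of $c$ is a morphism of the relevant substructure, up to the stated signs'') and prove it by induction on $r$, reducing to the single-$c$ case $P_n^1$, where the sign can be checked by hand against relations \eqref{form} and \eqref{cocycle}. Once that lemma is in place, the rest is a sign-tracking exercise parallel to \cite{hawkins}, and I would present only the structure of that bookkeeping rather than every case. The $n=1$ and $n=2$ base cases (which encode $\partial'^2 = 0$ and the fact that $\partial'$ is a derivation compatible with $L_2$) I would check directly as a sanity anchor before running the induction.
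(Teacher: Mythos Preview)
Your plan is essentially the paper's own approach. The paper packages your steps one through three as Lemma~\ref{mainlemma}: for each fixed $n,r$ an \emph{exact} identity
$-\partial(L_n^r) = \sum (\pm)(L_k^i \circ_k L_l^j)$ summed over $i+j=r$, $k+l=n+1$, $k+i\geq 2$, $l+j\geq 2$; the theorem then follows in a few lines from this lemma together with $L_0^r = 0$ for $r\geq 2$ (Lemma~\ref{Lvanish}), after rewriting $\partial'$ as $\partial$ plus the $k=1$ and $l=1$ terms of the sum (your step four).

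Two places where your expectations diverge from what actually happens. First, the sign-matching in your step three needs \emph{no} relation on $c$ whatsoever: the paper establishes a direct bijection (equation~\eqref{signs}) between the data $(\sigma,x,y_1<\cdots<y_r)$ indexing terms of $-\partial L_n^r$ and the data $(\chi,\tau,\tau',z_\bullet,z'_\bullet)$ indexing terms on the composite side, splitting the $y$'s into three blocks according to their position relative to the composition slot and checking the signs by an explicit shuffle computation. No induction on $r$ and no reduction to the case $r=1$ is used, and the cocycle relation~\eqref{cocycle} plays no role here --- it (together with relation~\eqref{form}) enters only through Lemma~\ref{Lvanish}, to kill the spurious $L_0^r$ terms that the bijection would otherwise produce. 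Second, your worry about ``illegal configurations'' arising under composition is moot once the identity is phrased at the level of the $L_k^i$: these already live in $\Quilt[c]$, so their operadic composites do too.
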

First we need some lemmas.
\begin{lemma}\label{Lvanish}
For $r \geq 2$ we have $L^{r}_{0} = 0$.
\end{lemma}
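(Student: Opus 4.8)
The plan is to unwind the definition of $L_0^r$ and show that already at the level of the elements $P_0^r$ everything vanishes for $r \geq 2$. Recall that by Definition \ref{P}, for $n = 0$ and $r \geq 2$ we have $P_0^r = \sum_{1 \leq y_1 < \dots < y_r \leq r} (-1)^{y_1,\dots,y_r} P_r^0 \circ_{y_1} c \circ_{y_2 - 1} \dots \circ_{y_r - r + 1} c$; since the only strictly increasing sequence $1 \leq y_1 < \dots < y_r \leq r$ is $(1, 2, \dots, r)$, this reduces to a single term, namely $P_r^0$ with a copy of $c$ plugged into each of its $r$ leaves (after the usual reindexing), up to sign. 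So it suffices to show this single element is zero in $\Quilt[c]$, and then $L_0^r = \sum_{\sigma \in \Ss_0}(-1)^\sigma (P_0^r)^\sigma = P_0^r = 0$ follows trivially since $\Ss_0$ is trivial.

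First I would recall the explicit form of $P_r^0$: it is the signed sum of quilts $Q = (W, T) \in \Quilt(r)$ of degree $r - 2$ labelled in $\downarrow$-order. The key point is to examine, for each such $Q$, what happens when we plug $c$ into every one of the $r$ vertices. By relation \eqref{form} of Definition \ref{defc}, $(W,T) \circ_i c = 0$ whenever vertex $i$ has more than two children in $T$ or $i$ is repeated in $W$. So for the full plugging $Q \circ_1 c \circ_1 \dots$ to be nonzero, \emph{every} vertex of $Q$ must have at most two children in $T$ and must occur exactly once in $W$ (i.e. be a leaf-type letter, non-repeated). The requirement that no letter of $W$ be repeated forces $|W| = r$, hence $\deg(W) = |W| - r = 0$; but $\deg Q = \deg W = r - 2$, so we need $r - 2 = 0$, i.e. $r = 2$. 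Thus for $r \geq 3$ every term dies by \eqref{form}, and $P_0^r = 0$ immediately.

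The remaining case is $r = 2$, which I expect to be the main obstacle — here the degree constraint does \emph{not} by itself kill the terms, and one must use the cocycle relation \eqref{cocycle}. For $r = 2$, $\deg Q = 0$ means $W$ has length $2$; combined with nondegeneracy and surjectivity onto $\langle 2 \rangle$ this forces $W = 12$ or $W = 21$, and after labelling in $\downarrow$-order we get $W = 12$. The tree $T \in \Tree(2)$ has two vertices; the quilt conditions allow either $T$ with $1$ the parent of $2$, or $2$ the parent of $1$, or the two vertices incomparable — but $P_2^0$ precisely records (with its sign $(-1)^{1 + \frac{2 \cdot 1}{2}} = +1$) the relevant generator, and plugging $c$ into both vertices of $(12, \text{--})$ produces exactly the left-hand side of relation \eqref{cocycle}, $(12, \tikzfig{def_c}) \circ_1 c \circ_1 c$, up to the shuffle sign $(-1)^{1,2} = +1$ and possibly an overall sign from the $\downarrow$-ordering convention. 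Hence $P_0^2$ is a scalar multiple of an element that is $0$ by \eqref{cocycle}, so $P_0^2 = 0$ and therefore $L_0^2 = 0$. I would present the $r = 2$ case by carefully matching the single surviving quilt against the cocycle relation and tracking the sign through Definition \ref{P} and the $\downarrow$-order normalization in the definition of $P_n^0$; the sign bookkeeping is the only genuinely delicate part, everything else being forced.
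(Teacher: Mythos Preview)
Your argument is correct and matches the paper's proof: relation \eqref{form} kills every term of $P_0^r$ for $r \geq 3$ via exactly the degree count you give (no repeated letters forces $\deg W = 0 \neq r-2$), and relation \eqref{cocycle} handles $r = 2$. One small slip in the $r=2$ discussion: a rooted planar tree on two vertices has no ``incomparable'' option, and the quilt axiom $1 \not>_T 2$ coming from $W = 12$ already rules out the tree with $2$ as root, so $P_2^0$ is literally the single quilt appearing in the cocycle relation --- your conclusion stands, only the intermediate enumeration of cases is off.
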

\begin{proof}
Due to relation (\ref{form}) in definition \ref{defc} of $c$, there cannot be a vertical composition of $c$. Hence, $L_{0}^{r} =0$ for $r \geq 3$. $L_{0}^{2}=0$ due to condition \ref{cocycle} of the definition of $c$.
\end{proof}
\begin{opm}
Note that we have used $2$ out of the three conditions on $c$ to prove this lemma.
\end{opm}

The following lemma extends the $\Linf$-equation of $(L^{0}_{n})_{n}$ for higher $L_{n}^{r}$.
\begin{lemma}\label{mainlemma}
For $n\geq 1$ and $r\geq 0$, we have
$$-\partial(L_{n}^{r})= \sum_{\substack{i+j=r \\ k+l=n+1 \\ k+i \geq 2 \\ l+j \geq 2}}\sum_{\substack{\chi \in Sh_{k-1,l}}}(-1)^{\chi}(-1)^{(k-1)l}\left(L_{k}^{i} \circ_{k} L_{l}^{j}\right)^{\chi\inv}.$$

\end{lemma}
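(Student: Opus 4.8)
The plan is to derive the identity for $L_n^r$ by inserting $r$ copies of the twist $c$ into the $\Linf$-relations among the $P_m^0$'s recorded in \eqref{partialP}, and then antisymmetrising over $\Ss_n$. Since both sides of the asserted equality are antisymmetrisations over $\Ss_n$ of expressions built from the $P$'s, I would first prove the analogous identity at the level of the $P_n^r$, valid up to an element of the symmetrisation ideal $\langle Q-Q^\sigma\mid\sigma\in\Ss_n\rangle\subseteq\Quilt[c](n)$. Once that $P$-level identity is available, applying $\sum_{\sigma\in\Ss_n}(-1)^\sigma(-)^\sigma$ annihilates the ideal and converts an internal sum $\sum_{s=1}^{k}(-1)^{(k-s)(l-1)}\,P_k^i\circ_s P_l^j$ into the shuffle sum $\sum_{\chi\in Sh_{k-1,l}}(-1)^\chi\,(L_k^i\circ_k L_l^j)^{\chi\inv}$ (the global factor $(-1)^{(k-1)l}$ being carried along), by precisely the bookkeeping that already turns \eqref{partialP} into \eqref{Linf}, cf.\ \cite[\S 7.1]{hawkins}; the case $r=0$ of the present lemma is exactly \eqref{Linf} and is already established.

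To obtain the $P$-level identity I would take \eqref{partialP} written for $P_{n+r}^0$ and compose it on the right with $c$ at all increasing slot-tuples $y_1<\dots<y_r$, weighted by the shuffle signs $(-1)^{y_1,\dots,y_r}$ of Definition~\ref{P}. On the left-hand side this reproduces $\partial P_n^r$ exactly: by relation \eqref{nattrans} we have $\partial(c)=0$, so the derivation $\partial$ passes through every factor $c$ with no sign and $\partial P_n^r=\sum_{y_1<\dots<y_r}(-1)^{y_1,\dots,y_r}(\partial P_{n+r}^0)\circ_{y_1}c\circ\dots\circ c$. Substituting \eqref{partialP}, the symmetrisation-ideal term stays inside the symmetrisation ideal for $\Ss_n$ (a direct check: inserting the $0$-ary element $c$ commutes with relabelling of inputs up to the signs already built into $P_n^r$), and the remaining terms are governed by $\sum_{y_1<\dots<y_r}(-1)^{y_1,\dots,y_r}\sum_{p+q=n+r+1,\,p,q\ge 2}\sum_{s=1}^{p}(-1)^{(p-1)q+(p-s)(q-1)}(P_p^0\circ_s P_q^0)\circ_{y_1}c\circ\dots\circ c$.

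The heart of the argument is the reorganisation of this double sum. Each slot $y_t$ of the composite $P_p^0\circ_s P_q^0$ lies either in the range of the outer operation $P_p^0$ (there are $p-1$ such slots, slot $s$ being occupied by $P_q^0$) or in the range of the inner operation $P_q^0$ (there are $q$ such slots); this partitions $\{y_1,\dots,y_r\}$ into a subset of size $i$ going into $P_p^0$ and a subset of size $j=r-i$ going into $P_q^0$. Using the two operad axioms to pull the ``inner'' $c$'s into $P_q^0$ and the ``outer'' $c$'s into $P_p^0$ --- and recording the Koszul signs incurred when the degree $-1$ element $c$ is commuted past $P_q^0$ and past the other $c$'s --- each term becomes, after relabelling, a term $P_k^i\circ_{s'}P_l^j$ with $k=p-i\ge 1$, $l=q-j\ge 0$, $s'\in\{1,\dots,k\}$, so that automatically $k+l=n+1$, $i+j=r$, $k+i=p\ge 2$ and $l+j=q\ge 2$, which is the index set of the lemma (the extra index values with $k=0$ that do not arise this way only contribute vanishing terms, since $L_0^i=0$). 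The residual freedom in $(y_1,\dots,y_r)$ reassembles into the shuffle sign of $P_k^i$, the shuffle sign of $P_l^j$, and an ``internal'' sign that the antisymmetrisation absorbs into $\chi$, together with a global factor $(-1)^{(k-1)l}$. Finally, the terms with $l=0$ --- all $q\ (\ge 2)$ of the inner $c$'s stacked vertically onto $P_q^0$ --- produce $P_0^q$, which vanishes: a stack of three $c$'s is killed by relation \eqref{form} and a stack of two by the cocycle relation \eqref{cocycle}; this is exactly Lemma~\ref{Lvanish}, which may be cited directly. Antisymmetrising over $\Ss_n$ as in the first paragraph then yields the stated formula.

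The step I expect to be the main obstacle is the sign accounting in this reorganisation: one has to verify that the shuffle signs $(-1)^{y_1,\dots,y_r}$ of Definition~\ref{P}, the Koszul signs from re-associating the $c$'s past graded factors, and the sign $(-1)^{(p-1)q+(p-s)(q-1)}$ of \eqref{partialP} combine --- after the splitting $p=k+i$, $q=l+j$ and after the antisymmetrisation over $\Ss_n$ --- to exactly $(-1)^\chi(-1)^{(k-1)l}$ with $\chi\in Sh_{k-1,l}$. A secondary technical point is to check that the admissibility condition of relation \eqref{form} (a copy of $c$ may be plugged only into an unrepeated vertex with at most two children) is inherited by each factor $P_p^0$ and $P_q^0$ from the composite $P_p^0\circ_s P_q^0$, so that the partition of the inserted $c$'s really does biject with the admissible configurations contributing to $P_k^i$ and $P_l^j$.
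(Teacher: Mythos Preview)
Your approach is essentially the same as the paper's: both start by applying $\partial$ to $L_n^r$, use \eqref{partialP} for $P_{n+r}^0$ together with $\partial(c)=0$, split the inserted $c$'s between the outer and inner factors of $P_p^0\circ_x P_q^0$, and then antisymmetrise over $\Ss_n$. You correctly identify the sign accounting as the main obstacle, and this is precisely where the paper's proof concentrates its effort: it sets up an explicit bijection between the data $(y_1<\dots<y_r,\,x,\,\sigma)$ on the left and $(z_1<\dots<z_i,\,z'_1<\dots<z'_j,\,\tau,\,\tau',\,\chi)$ on the right, and then verifies the single sign identity \eqref{signs} by decomposing each shuffle into pieces and matching the exponents mod~$2$.

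Two minor remarks. Your invocation of Lemma~\ref{Lvanish} for the $l=0$ terms is unnecessary here: the bijection already includes those terms symmetrically on both sides (they happen to vanish, but the identity holds regardless); Lemma~\ref{Lvanish} is used later, in the proof of Theorem~\ref{maintheorem}, not in this lemma. Your concern about $k=0$ is moot: since the outer factor $P_p^0$ has only $p-1$ free slots after $P_q^0$ occupies one, at most $p-1$ of the $c$'s can go there, so $k=p-i\ge 1$ automatically, and on the right-hand side $\circ_k$ requires $k\ge 1$ anyway. Likewise, your ``secondary technical point'' about relation~\eqref{form} is not needed: the identity is established in $\Quilt[c]$, where terms killed by \eqref{form} vanish equally on both sides of the bijection.
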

\begin{proof} 
By applying $\partial$ and using equation (\ref{partialP}) and $\partial(c)=0$, we deduce
$$-\partial(L^{r}_{n}) = \sum_{\substack{p+q = n+r+1\\ p,q \geq 2}} \sum_{\si \in \Ss_{n}} \sum_{\substack{1\leq y_{1} < \ldots < y_{r} \leq n+r \\ x=1,\ldots,p}} (-1)^{\fromto{y}{r}} (-1)^{\si} (-1)^{(p-1)q}(-1)^{(p-x)(q-1)}\left(P_{p}^{0}\circ_{x} P^{0}_{q} \left(\circ_{y_{e}-e+1}  c \right)_{e} \right)^{\si}$$
Given $1 \leq y_{1} < \ldots < y_{r} \leq n+r$ and $1 \leq x \leq p$, we have a subdivision into two groups where $\circ_{y_{s}-s+1}c$ is inserted into either $P^{0}_{p}$ or $P^{0}_{q}$. Hence, if we are also given a permutation $\si \in \Ss_{n}$, then we show that there exists unique integers $i+j=r, k+l=n+1$, indices $1 \leq z_{1} < \ldots < z_{i} \leq k+i$ and $1 \leq z'_{1} < \ldots < z'_{j} \leq j+l$, permutations $\tau \in \Ss_{k+i},\tau'\in \Ss_{j+l}$ and a shuffle $\chi \in Sh_{k-1,l}$ such that
\begin{multline}\label{signs}
(-1)^{\fromto{y}{r}} (-1)^{\si} (-1)^{(p-1)q}(-1)^{(p-x)(q-1)}\left(P_{p}^{0}\circ_{x} P^{0}_{q} \left(\circ_{y_{e}-e+1}  c \right)_{e} \right)^{\si} = \\
(-1)^{(\fromto{z}{i})+(\fromto{z'}{j})}(-1)^{\chi+\tau+\tau'}(-1)^{(k-1)l}\left(\left(P_{k+i}^{0}\left(\circ_{z_{e}-e+1} c\right)_{e}\right)^{\tau} \circ_{k} \left(P^{0}_{l+j}(\circ_{z'_{f}-f+1}c)_{f}\right)^{\tau'}\right)^{\chi\inv}
\end{multline}
 In this case, we obtain
\begin{multline}
-\partial(L_{n}^{r}) = \sum_{\substack{i+j=r\\ k+l=n+1 \\ k+i \geq 2 \\ l+j \geq 2}}\sum_{\substack{\chi \in Sh_{k-1,l}}} (-1)^{\chi}(-1)^{(k-1)l}\\
 \sum_{\tau \in \Ss_{k},\tau'\in \Ss_{l}}\sum_{ \substack{1 \leq z_{1} < \ldots < z_{i} \leq k+i \\ 1 \leq z'_{1} < \ldots < z'_{j} \leq j+l}}   (-1)^{\tau+\tau'+(\fromto{z}{i})+(\fromto{z'}{j})}\left(\left(P_{k+i}^{0}\left(\circ_{z_{e}-e+1} c\right)_{e}\right)^{\tau} \circ_{k} \left(P^{0}_{l+j}(\circ_{z'_{f}-f+1}c)_{f}\right)^{\tau'}\right)^{\chi\inv}
\end{multline}
By applying the definition of $L_n$, this proves the lemma.

We show that equation (\ref{signs}) holds: the set $y_{1}< \ldots < y_{r}$ splits into three subsets 
\begin{itemize}
\item $z_{1}^{1}< \ldots < z_{i_{1}}^{1} \text{ such that } z^{1}_{t} < x$,
\item $z_{1}^{2}< \ldots < z_{i_{2}}^{2} \text{ such that } z^{2}_{t} > x+q-1$,
\item $z_{1}^{'0}< \ldots < z_{j}^{'0} = \{y_{1}< \ldots < y_{r}\} \setminus \{ z_{1}^{1}< \ldots < z_{i_{1}}^{1}, z_{1}^{2}< \ldots < z_{i_{2}}^{2}\}$.
\end{itemize} 
from which we define
\begin{align*}
&z_{t} := \begin{cases} z^{1}_{t} & t \leq i_{1} \\ z^{2}_{t-i_{1}}-q & t> i_{1} \end{cases} &z'_{t} := z^{'0}_{t} - x+1\\
&i := i_{1} +i_{2} \text{ and } j:= r-i, &k:= p- i \text{ and } l:=q-j
\end{align*} 
We then compute 
\begin{equation}
P_{p}^{0}\circ_{x} P^{0}_{q} \left(\circ_{y_{e}-e+1}  c \right)_{e} = (-1)^{i(l+j)+i_{2}j} \left(P_{k+i}^{0}\left(\circ_{z_{e}-e+1} c\right)_{e}\right)\circ_{x-i_{1}} \left(P^{0}_{l+j}(\circ_{z'_{f}-f+1}c)_{f}\right)
\end{equation}
where the sign appears because we move the $c$'s corresponding to $z_{t}^{'0}$ past $j$ $c$'s corresponding to $z^{'0}_{t}$ and also $i$ $c$'s past $P_{q}^{0}$. Note that if we know $x$ and $i_{1}$, then given $i,j,k,l,\fromto{z}{i},\fromto{z'}{j}$ we can uniquely determine $p,q,r,n,\fromto{y}{r}$.

We also compute the sign of $\fromto{y}{r}$: let $\te$ and $\te'$ be the shuffles such that
\begin{align*}
&\fromto{z^{1}}{i_{1}} \overset{\te}{\leadsto} 1,\ldots,i_{1}\\
&\fromto{z^{'0}}{j} \overset{\fromto{z'}{j}}{\leadsto} x,\ldots,x+j \overset{j(x-i_{1}-1)}{\leadsto} i_{1}+1,\ldots,i_{1}+j\\
&\fromto{z^{2}}{i_{2}} \overset{\te'}{\leadsto} x+q,\ldots,x+q+i_{2}-1 \overset{i_{2}(l+x-i_{1}-1)}{\leadsto} i_{1}+j+1,\ldots,r
\end{align*}
then we obtain that
$$(-1)^{\fromto{y}{r}} = (-1)^{\te + (\fromto{z'}{j}) + j(x-i_{1}-1) + \te' + i_{2}(l+x-i_{1}-1)}$$
On the other hand, 
\begin{align*}
&\fromto{z}{i_{1}},z_{i_{1}+1},\ldots,z_{i} \overset{\te+\te'}{\leadsto} 1,\ldots,i_{1},x+1,\ldots,x+i_{2} \overset{i_{2}(x-i_{1})}{\leadsto} 1,\ldots,i
\end{align*}
and thus we have 
$$(-1)^{\fromto{y}{r}} = (-1)^{(\fromto{z}{i})+ (\fromto{z'}{j}) + j(x-i_{1}-1) +  i_{2}(l+1)}$$

Given $\si \in \Ss_{n}$, let $b_{1}<\ldots<b_{l}\sub \lh k+l-1\rh$ such that $\si(b_{t}) \in \{x-i_{1},\ldots,x-i_{1}+l-1\}$ and let $a_{1} <\ldots < a_{k_1} := \lh k+l-1 \rh \setminus \{b_{1}< \ldots < b_{l} \}$. We then define the $(k-1,l)$-shuffle $\chi = (a_{1}< \ldots < a_{k-1},b_{1} < \ldots < b_{l})$ and 
$$ \tau(t) = \begin{cases} \si \chi (t) & t< k \\ x-i_{1} & t = k\end{cases} \text{ and } \tau'(t) = \si(b_{t}) - (x-i_{1}-1) $$ 
It is then easy to see that 
$$\left(P_{k+i}^{0}\left(\circ_{z_{e}-e+1} c\right)_{e}\circ_{x-i_{1}} \left(P^{0}_{l+j}(\circ_{z'_{f}-f+1}c)_{f}\right)\right)^{\si\chi} = \left(P_{k+i}^{0}\left(\circ_{z_{e}-e+1} c\right)_{e}\right)^{\tau}\circ_{k} \left(P^{0}_{l+j}(\circ_{z'_{f}-f+1}c)_{f}\right)^{\tau'}$$
Note that $\tau$ and $\fromto{z}{i}$ determine $x$ and $i_{1}$ uniquely. Hence, given $\fromto{z}{i},\fromto{z'}{j},k,l$ we can uniquely determine $\fromto{y}{r},p,q,n,r$ in the above manner.
In order to show that equation (\ref{signs}) holds, we only need to verify the corresponding signs: let $\tau_{0}$ be the permutation such that 
\begin{align*}
&\si \chi(1),\ldots,\si\chi(k-1) \overset{\tau_{0}}{\leadsto} 1,\ldots,x-i_{1}-1,x-i_{1}+l,\ldots,k+l-1 \\
&\si\chi(k),\ldots,\si\chi(k+l-1) \overset{\tau'}{\leadsto} x-i_{1},\ldots,x-i_{1}+l
\end{align*}
and thus we have that $(-1)^{\si+\chi} = (-1)^{\tau_{0}+\tau'+l(k-x+i_{1})}$.
On the other hand, we have that $\tau$ corresponds to
\begin{align*}
\tau(1),\ldots,\tau(k) \overset{\tau_{0}}{\leadsto} 1,\ldots,x-i_{1}-1,x-i_{1}+1,\ldots,k-1,\tau(k)\overset{k-x+i_{1}}{\leadsto} 1,\ldots,k-1
\end{align*}
and thus we have
$$(-1)^{\si + \chi } = (-1)^{\tau + \tau' + (l+1)(k-x+i_{1})}$$
As such, we can compute 
\begin{align*}
&(-1)^{(\fromto{z}{i})+(\fromto{z'}{j})}(-1)^{\chi+\tau+\tau'}(-1)^{(k-1)l}(-1)^{i(l+j)+i_{2}j}\\
 &= (-1)^{\fromto{y}{r} + \si } (-1)^{i_{2}(l+1) + j(x-i_{1}-1)+(l+1)(k-x+i_{1}) +(k-1)l+ i(l+j)+i_{2}j}
\end{align*}
and 
\begin{align*}
&i_{2}(l+1) + j(x-i_{1}-1)+(l+1)(k-x+i_{1}) +(k-1)l+ i(l+j)+i_{2}j\\
 &= (l+1)(k-x+i) + j(x-i-1)+ (k-1)l+i(l+j) \\
&= (l+1)(p-x) + j(p-x+(k-1))+ (k-1)l+iq \\
&= (q-1)(p-x)+(k-1)q + iq\\
&= (q-1)(p-x)+(p-1)q
\end{align*}
which completes the proof.
 \end{proof}
 
 \begin{lemma}
$L_{n}$ are skew symmetric and $\partial'$ is a differential making $\Quiltb[[c]]$ into a dg-operad.
 \end{lemma}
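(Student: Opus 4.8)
The plan is to verify the three separate assertions of the lemma in turn: that each $L_n$ is skew-symmetric, that $\partial'$ squares to zero, and that $\partial'$ is a derivation (so that $(\Quiltb[[c]],\partial')$ is a dg-operad).

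\emph{Skew-symmetry.} Each $L_n = \sum_{r\geq 0} L_n^r$, and by Definition \ref{P} we have $L_n^r = \sum_{\si \in \Ss_n} (-1)^\si (P_n^r)^\si$, which is manifestly the antisymmetrization of $P_n^r$. Hence for any $\pi \in \Ss_n$, $(L_n^r)^\pi = \sum_{\si}(-1)^\si (P_n^r)^{\si\pi} = (-1)^\pi \sum_{\si'}(-1)^{\si'}(P_n^r)^{\si'} = (-1)^\pi L_n^r$, using the change of variables $\si' = \si\pi$ and $(-1)^{\si'} = (-1)^{\si}(-1)^{\pi}$ in the graded operad $\Quilt[c]$ (where the $\Ss_n$-action already incorporates the Koszul signs). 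Summing over $r$ gives $(L_n)^\pi = (-1)^\pi L_n$ in $\Quiltb[[c]]$, which is the desired skew-symmetry. I would note here that the degree bookkeeping is uniform in $r$ since each $L_n^r$ has internal degree $n-2$, so the sum indeed lands in the bounded series operad $\Quiltb[[c]]$ as already observed in the Remark preceding the theorem.

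\emph{$\partial'$ is a differential.} Write $\partial' = \partial + \partial_{L_1}$. First one checks $\partial' \circ \partial' = 0$ by expanding: $(\partial')^2 = \partial^2 + \partial\partial_{L_1} + \partial_{L_1}\partial + \partial_{L_1}\partial_{L_1}$. We have $\partial^2 = 0$ since $\partial$ is the boundary inherited from $\Quilt$ (extended by $\partial(c)=0$). The cross terms: since $\partial_{L_1}$ is the derivation by the element $L_1$ and $\partial$ is a derivation, $\partial\partial_{L_1} + \partial_{L_1}\partial$ is the derivation by $\partial(L_1)$ up to the standard sign. The term $\partial_{L_1}\partial_{L_1}$ applied to $A$ produces, by the graded Jacobi/pre-Lie identity for the operadic (Gerstenhaber) bracket on an operad, the derivation by $\tfrac12[L_1,L_1]$, i.e. by $\sum_{i}L_1\circ_i L_1$ with appropriate signs. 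So $(\partial')^2$ is the derivation by $-\partial(L_1) + \tfrac12[L_1,L_1]$ (signs to be pinned down), and this vanishes precisely by Lemma \ref{mainlemma} in the case $n=1$, combined with Lemma \ref{Lvanish} which kills the would-be $L_0^r$ contributions for $r\geq 2$ (these are exactly the terms that could spoil the identity). Thus $(\partial')^2 = 0$.

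\emph{$\partial'$ is a derivation and compatible with the $\Ss$-action.} Both $\partial$ and $\partial_{L_1}$ are derivations of the operadic composition (the first by construction in $\Quilt$, hence in $\Quilt[c]$ and $\Quiltb[[c]]$ after checking it respects the power-series composition $\circ_k$ degreewise in the number of $c$'s; the second because derivation-by-an-element is always a derivation, a standard fact used already implicitly in the excerpt for $\partial_D$ and $\partial_{L_1}$), and a sum of derivations is a derivation. Equivariance is immediate since $L_1$ is a fixed element (a $0$-ary operation has trivial $\Ss$-action to worry about) and $\partial$ is equivariant. Finally one records that $\partial'$ has degree $-1$: $\partial$ has degree $-1$, and $L_1 = \sum_{r\geq 1}L_1^r$ has internal degree $1-2 = -1$, so $\partial_{L_1}$ has degree $-1$ as well. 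The main obstacle is the second step: carefully matching the signs in $(\partial')^2$ against the $n=1$ instance of the $\Linf$-relation of Lemma \ref{mainlemma}, and making sure the $r\geq 2$ vanishing from Lemma \ref{Lvanish} is invoked exactly where the $L_1 \circ L_1$ terms would otherwise fail to reassemble — this is where the cocycle condition \eqref{cocycle} and the form condition \eqref{form} on $c$ are doing essential work.
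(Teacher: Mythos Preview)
Your proposal is correct and follows essentially the same route as the paper: skew-symmetry is immediate from the antisymmetrisation defining $L_n^r$; $\partial'$ is a derivation as a sum of derivations; and $(\partial')^2=0$ reduces to the identity $-\partial L_1 = L_1 \circ_1 L_1$, which is precisely the $n=1$ case of Lemma~\ref{mainlemma} together with Lemma~\ref{Lvanish}.

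Two small points where the paper is more explicit. First, the paper records $\partial'(c)=0$ (immediate from $\partial(c)=0$ and the fact that $c$ is $0$-ary, so $\partial_{L_1}(c)=L_1\circ_1 c$, which vanishes by the same reasoning as in Lemma~\ref{Lvanish}); this reduces the verification of $(\partial')^2=0$ to elements of $\Quilt$, which is the reduction you implicitly use. Second, where you invoke the abstract identity $\partial_{L_1}^2 = \partial_{\frac{1}{2}[L_1,L_1]}$, the paper instead computes $\partial_{L_1}\partial_{L_1}Q$ directly, using that $L_1$ has arity $1$ and degree $-1$ so that $Q\circ_i L_1 \circ_j L_1 = -Q\circ_j L_1 \circ_i L_1$ for $i\neq j$. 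Your formulation is cleaner conceptually, but note that the $\tfrac{1}{2}$ is only notational (you immediately identify it with $L_1\circ_1 L_1$), which matters since the paper works over an arbitrary commutative ring; the direct computation avoids any appearance of division. Your acknowledged looseness on signs is exactly where the paper spends its effort, and the explicit expansion there confirms the cancellations you anticipate.
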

 \begin{proof}
It is clear from the definition of $L_{n}^{r}$ that they are skew symmetric and thus also $L_{n}$.

Per definition $\partial_{L_{1}}$ is a derivation by construction and so is $\partial$, and thus so is $\partial'$.

It is clear from the definition of $c$ that $\partial'(c)= 0$. Hence, we only need to show that $\partial ' \partial'(Q) = 0$ for every $Q\in \Quilt$. Using lemma \ref{Lvanish} and \ref{mainlemma} we first prove that $-\partial L_{1} = L_{1} \circ_{1} L_{1}$. Namely, we compute
\begin{align}
-\partial( L_{1} )= \sum_{r \geq 1}\sum_{\substack{i+j=r \\ k+l=2 \\ k+i \geq 2 \\ l+j \geq 2}}\sum_{\substack{\chi \in Sh_{k-1,l}}}(-1)^{\chi}(-1)^{(k-1)l}(L_{k}^{i} \circ_{k} L_{l}^{j})^{\chi\inv} =\sum_{\substack{i+j=r \\ i,j \geq 1 \\ r\geq 1}}L_{1}^{i} \circ_{1} L_{1}^{j} = L_{1} \circ_{1} L_{1} \label{Lone}
\end{align}
Now let us compute
$$\partial' \partial ' Q = \partial \partial Q + \partial \partial_{L_{1}} Q + \partial_{L_{1}} \partial Q + \partial_{L_{1}}\partial_{L_{1}} Q$$
we compute separately 
$$\partial_{L_{1}} \partial Q = L_{1} \circ_{1} \partial Q - (-1)^{|Q|-1} \sum_{i} \partial Q \circ_{i} L_{1}$$
and 
\begin{align*}
\partial \partial_{L_{1}} Q &= \partial (L_{1} \circ_{1} Q) - (-1)^{|Q|} \sum_{i} \partial (Q\circ_{i} L_{1}) \\
&= \partial L_{1} \circ_{1} Q + (-1)^{|Q|} L_{1} \circ_{1} \partial Q - (-1)^{|Q|} \sum_{i} \partial Q \circ_{i} L_{1} - (-1)^{|Q|-1} \sum_{i} \partial L_{1}
\end{align*} 
adding them gives
\begin{align*}
 \partial_{L_{1}} \partial Q + \partial \partial_{L_{1}} Q = \partial L_{1} \circ_{1} Q - (-1)^{|Q|-1}\sum_{i} Q \circ_{i} \partial L_{1}  
 \end{align*}
Next, we compute $\partial_{L_{1}}\partial_{L_{1}} Q$. As $L_{1}$ has only a single input and has degree $-1$, we have for $i\neq j$ that $Q \circ_{i} L_{1} \circ_{j} L_{1} = - Q \circ_{j} L_{1} \circ_{i} L_{1}$. Hence, by also using using equation (\ref{Lone}), we obtain
\begin{align*}
\partial_{L_{1}}\partial_{L_{1}} Q &= L_{1} \circ_{1} (L_{1} \circ_{1} Q) - (-1)^{|Q|} \sum_{i} L_{1} \circ_{1} (Q \circ_{i} L_{1}) + (-1)^{|Q|} \sum_{i} (L_{1} \circ_{1} Q) \circ_{i} L_{1}\\
&  -  \sum_{i,j} Q \circ_{i} L_{1} \circ_{j} L_{1}\\
&= L_{1} \circ L_{1} \circ_{1} Q - \sum_{i} Q\circ_{i} (L_{1} \circ_{1} L_{1}) \\
&= \partial_{L_{1}} \partial Q + \partial \partial_{L_{1}} Q
\end{align*}
As $\partial \partial Q =0$ we thus obtain $\partial'\partial' Q= 0$.
 \end{proof}

\begin{proof}[Proof of Theorem \ref{maintheorem}]
We need to show for every $n\geq 2$ that the equation
$$ 0 = \partial'(L_{n}) +  \sum_{\substack{k+l = n+1 \\ k,l\geq 2}}  \sum_{\si \in Sh_{k-1,l}} (-1)^{(k-1)l} (-1)^{\si} (L_{k} \circ_{k} L_{l})^{\si\inv}$$
holds, which is equivalent to
$$ 0 = \partial(L_{n}) + \sum_{\substack{k+l = n+1 \\ k,l\geq 1}}  \sum_{\si \in Sh_{k-1,l}} (-1)^{(k-1)l} (-1)^{\si} (L_{k} \circ_{k} L_{l})^{\si\inv}$$
This is equivalent to showing for every $r\geq 0$ that the equation 
\begin{equation}
0 = \partial(L_{n}^{r}) + \sum_{\substack{i+j=r\\k+l = n+1 \\ k,l\geq 1 \\ (l,j)\neq (1,0) \neq (k,i) \\ (l,j) \neq (0,1) \neq (k,i)}}  \sum_{\si \in Sh_{k-1,l}} (-1)^{(k-1)l} (-1)^{\si} (L_{k}^{i} \circ_{k} L_{l}^{j})^{\si\inv}\label{eq1}
\end{equation} 
holds, which follows from lemma \ref{mainlemma} and $L^{i}_{0} = 0$ for $i\geq0$ (lemma \ref{Lvanish}).
\end{proof}

\begin{opm}
Under the natural morphism $\Quilt[c] \longrightarrow \mQuilt$ sending $c$ to $m$ our $\Linf$-structure corresponds to the $\Linf$-structure from \cite[Thm. 7.13]{hawkins}, that is, we have the commutative diagram
$$\begin{tikzcd}
                           & {\Quilt_{b}[[c]]} \arrow[rd] &                   \\
\Linf \arrow[ru] \arrow[r] & \mQuilt \arrow[r]            & {\Quilt_{b}[[m]]}
\end{tikzcd}$$
where $\Quilt_{b}[[m]]$ denotes the quotient of the operad of bounded powerseries by the relations on $m$ in $\mQuilt$.
\end{opm}

\subsection{The morphism $\Quiltb[[c]] \longrightarrow \End(s\CGS(\A))$}\label{parquiltbcCGS}

In this section, we make (the suspension of) $\CGS(\A)$ into a $\Quiltb[[c]]$-algebra.

The morphism $R: \Quilt \longrightarrow \End(s\CGS(\A))$ naturally extends to a morphism of graded operads $R_{c}:\Quilt[c] \longrightarrow \End(s\CGS(\A))$ by sending $c$ to $c\in \CGS(\A)$ as the axioms of $\Quilt[c]$ correspond respectively to $c$ being a natural transformation, the cocycle condition of $c$ and $c\in \textbf{C}^{2,0}(\A)$. Next, we will show that it further extends to the operad of bounded power series.
\begin{lemma}\label{finitesupport}
Let $\nth{\te}\in \CGS(\A)$ where $\te_{i} \in C^{p_{i},q_{i}}(\A)$ and $Q \in \Quilt[c](n,r)$ of degree $t$, then $R_{c}(Q)(\nth{\te}) = 0$ if $r > \sum_{i=1}^{n}q_{i} - t$.
\end{lemma}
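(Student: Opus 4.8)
The plan is to analyze how the degree of an element $Q \in \Quilt[c](n,r)$ relates to the grading of $R_c(Q)(\te_1,\dots,\te_n)$ inside $\CGS(\A)$, and to show that when $r$ is too large relative to $\sum_i q_i - t$, the geometry of the associated patchwork forces a degenerate simplex (or an out-of-range index) to appear, killing the image. First I would unwind the definitions: an element $Q \in \Quilt[c](n,r)$ is obtained from a quilt in $\Quilt(n+r)$ by plugging $r$ copies of $c$ into $r$ of its $n+r$ slots, subject to relation \eqref{form} of Definition \ref{defc}. Applying $R_c = \LL \circ (\bar\phi \hotimes \phi)$, each summand sends $Q$ to a patchwork $(X,I) \in \Patch$ whose vertical component $X \in \mNSOp$ has the $c$'s (and the $m$'s coming from $\bar\phi$) plugged into vertices, and then $\LL$ assembles the $\te_i$ together with these $c$'s, which live in $\textbf{C}^{2,0}(\A)$.

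The key computation is degree bookkeeping. For a patchwork $(X,I) \in \Patch((q_1,p_1),\dots,(q_n,p_n);(q,p))$ built from the $\te_i \in \textbf{C}^{p_i,q_i}(\A)$ together with $r$ copies of $c \in \textbf{C}^{2,0}(\A)$ and some number $k$ of multiplications $m \in \textbf{C}^{0,2}(\A)$, the output lies in $\textbf{C}^{p,q}(\A)$. Counting the $q$-coordinate (the number of "horizontal"/Hochschild inputs) through the operadic composition in $\End(\A)$: each composition $\Te_a \circ_i (\dots)$ either feeds in a block with $q_b$ inputs occupying one slot, or a restriction functor occupying one slot; the total count gives $q = \sum_{i=1}^n (q_i - 1) + r(2-1) + k(0-1) + 1$ after accounting for all $n+r+k$ blocks assembled into a single tree, i.e. $q = \sum_i q_i - n + r - k + 1$. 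Meanwhile the degree $t$ of $Q$ in $\Quilt[c]$ is $\deg(W) = |W| - (n+r)$ for the underlying word $W \in \FS(n+r)$, and from Construction \ref{underlyingWord} / the remark after Definition \ref{coloringWord}, the number of $m$'s plugged in is $k = (n+r) - \deg(W) - 1 = (n+r) - t - 1$. Substituting, $q = \sum_i q_i - n + r - ((n+r) - t - 1) + 1 = \sum_i q_i + t + 2 - 2n$. Since $q \geq 0$ is required (and in fact the $p,q$-coloring must be a legitimate bidegree, so $q\ge 0$), and more to the point the total degree $\sum_i q_i$ available as "horizontal material" to distribute is fixed, the constraint $r > \sum_i q_i - t$ forces $k = (n+r)-t-1 > \sum_i q_i + n - 1 \ge n-1$ copies of $m$; equivalently, after the $r$ blocks $c$ (each contributing $0$ to the horizontal count and being flat) and the $k$ flat $m$'s are inserted, there is insufficient horizontal input to realize a nonzero $(p,q)$-coloring — concretely $q$ would be negative, so $\Patch((q_1,p_1),\dots;(q,p))$ is empty, hence the coloring set in $\bar\phi \hotimes \phi$ is empty and $R_c(Q)(\underline\te) = 0$.

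So the main steps, in order, are: (1) reduce to a single summand $(X,I)$ in the expansion of $(\bar\phi\hotimes\phi)(Q)$; (2) record $k = (n+r) - t - 1$ for the number of $m$'s, from the grading conventions on $\FS$ and $\mNSOp_{st}$; (3) compute the output bidegree $q = \sum_i q_i + t + 2 - 2n$ via the horizontal-input count through $\LL$ and the composition in $\End(\A)$; (4) observe that $r > \sum_i q_i - t$ together with $n \ge 1$ makes this $q$ (or more precisely the required coloring bidegree) impossible — the set of colorings is empty — so every summand vanishes. The step I expect to be the main obstacle is (3): getting the bookkeeping of the horizontal inputs exactly right through the nested operadic compositions of Construction \ref{LL}, keeping careful track of which slots are filled by blocks $\Te_b$, which by restriction functors $\si_a(i)\hs$, and which by the flat elements $c$ and $m$ (which contribute no horizontal inputs at all), so that the arithmetic linking $r$, $t$, $k$ and $\sum_i q_i$ comes out cleanly. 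Once that identity is pinned down, the inequality $r > \sum_i q_i - t$ immediately contradicts the existence of a valid $(p,q)$-coloring.
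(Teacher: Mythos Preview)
Your overall strategy is exactly the one the paper uses: write $Q = Q' \circ_{i_1} c \cdots \circ_{i_r} c$ for some $Q' \in \Quilt(n+r)$, compute the output $q$-bidegree of $R(Q')(\te_1,\ldots,\te_n,c,\ldots,c)$, and observe it becomes negative precisely when $r > \sum_i q_i - t$, whence the result since $\CGS(\A)$ is supported in non-negative bidegree. However, your bookkeeping in step~(3) contains two errors that together yield the suspicious formula $q = \sum_i q_i + t + 2 - 2n$; the fact that this is independent of $r$ should already be a red flag.

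First, you have swapped the $q$-values of $c$ and $m$: since $c \in \textbf{C}^{2,0}(\A)$ and $m \in \textbf{C}^{0,2}(\A)$, the Hochschild (second-index) contribution of each $c$ is $0$ and that of each $m$ is $2$. Your expression should therefore read
\[
q \;=\; \sum_{i=1}^n(q_i-1) + r(0-1) + k(2-1) + 1 \;=\; \sum_i q_i - n - r + k + 1.
\]
Second, in step~(2) you used $deg(Q)=t$ where you need $deg(Q')$. Since each $c$ has degree $-1$, one has $deg(Q') = t + r$, and hence (by the remark following Definition~\ref{coloringWord}, applied with $n+r$ inputs) the number of $m$'s is $k = (n+r) - deg(Q') - 1 = n - t - 1$, not $(n+r) - t - 1$.

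With both corrections,
\[
q \;=\; \sum_i q_i - n - r + (n - t - 1) + 1 \;=\; \sum_i q_i - r - t,
\]
and the conclusion follows immediately. This is precisely the paper's one-line computation: it records the output $q$-degree directly as $\sum_i q_i - deg(Q')$ with $deg(Q') = t+r$. No new ideas are needed; just fix the two slips.
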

\begin{proof}
This follows from $\CGS(\A)$ having only non-negative bidegree. Namely, given $Q = Q' \circ_{i_{1}} c \circ_{i_{2}} \ldots \circ_{i_{r}} c$ for a certain $Q'\in \Quilt(n+r)$, the bidegree of $R_{c}(Q')$ is $(\sum_{i=1}^{n}p_{i}+ 2r -(n+r-1), \sum_{i=1}^{n}q_{i} - deg(Q'))$.
As $deg(Q') = deg(Q) + r = t +r$, we have that 
$$\sum_{i=1}^{n}q_{i} - deg(Q') < 0 \iff r >\sum_{i=1}^{n}q_{i} - t$$
proving the lemma.
\end{proof}

\begin{theorem}\label{thmRc}
For $\A = (\A,m,f,c)$ a linear prestack over $\U$, the map 
$$R_{c}:(\Quiltb[[c]],\partial') \longrightarrow \End(s\CGS(\A),d)$$
defined as $R_{c}(Q) = R(Q)$ for $Q\in \Quilt$ and $R_{c}(c) = c$, is a morphism of dg-operads.
\end{theorem}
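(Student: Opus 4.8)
The plan is to first extend $R_{c}$ to the bounded power series operad $\Quiltb[[c]]$, and then to reduce the dg-statement to a single identity between unary operations on the GS-complex.

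First I would carry out the extension. We already have the morphism of graded operads $R_{c}\colon\Quilt[c]\longrightarrow\End(s\CGS(\A))$. Given $Q=\sum_{r\geq 0}Q_{r}\in\Quiltb[[c]](n)$, so that all $Q_{r}$ share a common degree $t$, and cochains $\te_{i}\in\textbf{C}^{p_{i},q_{i}}(\A)$, Lemma \ref{finitesupport} shows $R_{c}(Q_{r})(\nth{\te})=0$ as soon as $r>\sum_{i}q_{i}-t$, so $R_{c}(Q)(\nth{\te}):=\sum_{r}R_{c}(Q_{r})(\nth{\te})$ is a finite, well-defined sum. This is manifestly $\Ss_{n}$-equivariant and degree preserving, and it is multiplicative: the composition in $\Quiltb[[c]]$ is the term-wise one, and on any fixed tuple of cochains only finitely many summands on either side survive (again Lemma \ref{finitesupport}), so $R_{c}(Q\circ_{k}P)=R_{c}(Q)\circ_{k}R_{c}(P)$ follows term by term from the statement on $\Quilt[c]$. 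Hence $R_{c}\colon\Quiltb[[c]]\longrightarrow\End(s\CGS(\A))$ is a morphism of graded operads.

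Next I would reduce the dg-claim. The differential on $\End(s\CGS(\A),d)$ is the operadic derivation $\partial_{d}$ by the total differential $d=\sum_{j\geq 0}d_{j}$, and the differential on $\Quiltb[[c]]$ is $\partial'=\partial+\partial_{L_{1}}$. By Corollary \ref{correp1quilt}, $R=R_{c}|_{\Quilt}$ intertwines $\partial$ with $\partial_{d_{0}}$ for $d_{0}=\LL(D,1)$, and since $R_{c}$ is a degree-preserving operad morphism one has $R_{c}(\partial_{L_{1}}Q)=R_{c}(L_{1})\circ_{1}R_{c}(Q)-(-1)^{|Q|}\sum_{i}R_{c}(Q)\circ_{i}R_{c}(L_{1})=\partial_{R_{c}(L_{1})}(R_{c}Q)$. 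Thus the theorem reduces to the single identity
$$ R_{c}(L_{1})\;=\;d-d_{0}\;=\;\sum_{j\geq 1}d_{j}\qquad\text{in }\End(s\CGS(\A))(1). $$
Indeed, granting this, for $Q\in\Quilt$ we get $R_{c}(\partial'Q)=\partial_{d_{0}}(R_{c}Q)+\partial_{d-d_{0}}(R_{c}Q)=\partial_{d}(R_{c}Q)$ by linearity of $e\mapsto\partial_{e}$, whereas for the generator $c$ we have $\partial'(c)=0$ and $\partial_{d}(R_{c}c)=d(c)=d_{0}(c)+d_{1}(c)=0$ --- the vanishing of $d_{0}(c)$ being naturality of the twist and that of $d_{1}(c)$ its coherence, i.e.\ exactly relations \ref{nattrans} and \ref{cocycle} of Definition \ref{defc}. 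Since $\Quiltb[[c]]$ is topologically generated by $\Quilt$ together with $c$, while both $R_{c}\circ\partial'$ and $\partial_{d}\circ R_{c}$ are continuous derivations over $R_{c}$, agreement on these generators forces agreement everywhere.

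The bulk of the work --- and the \emph{main obstacle} --- is then the identity $R_{c}(L_{1})=\sum_{j\geq 1}d_{j}$, equivalently $R_{c}(L_{1}^{r})=d_{r}$ for each $r\geq 1$ (these are equivalent because, by the bidegree computation in the proof of Lemma \ref{finitesupport} applied to $Q'=P_{1+r}^{0}$ of degree $r-1$, $R_{c}(L_{1}^{r})$ has the bidegree shift $(+r,1-r)$ of $d_{r}$; reassuringly, $d_{r}$ is precisely the part of $d$ whose terms carry $r$ twists, namely one $c^{\si,p}$ together with the $r-1$ whiskered isomorphisms $\eps^{\si,i}$ of a formal path of length $r-1$). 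To prove $R_{c}(L_{1}^{r})=d_{r}$ I would unwind both sides on data $(\si,A,a)$: by Definition \ref{P} and antisymmetrization, $L_{1}^{r}=P_{1}^{r}$ is a signed sum of degree-$(r-1)$ quilts on $1+r$ vertices with $r$ of the vertices plugged by $c$, and $R_{c}=\LL\circ(\bar{\phi}\hotimes\phi)$ turns each such quilt-with-plugged-twists, after summing over colorings, into a patchwork which --- by the geometric picture of $\Patch$ and $\LL$ in \S\ref{parP} --- is the rectangle of $\te$ stacked above a staircase of flat twist rectangles, the open spaces filled by subsimplices of $\si$. One reads off that the bottom twist rectangle contributes $c^{\si,p,A_{0}}$, the remaining $r-1$ twist rectangles --- whiskered by the restriction functors coming from the aligning subsimplices --- contribute the composite of the $\eps$'s of a formal path $r'\in P(R_{p}(\si))$, and the interleaving of $\te$'s morphism arguments against these produces the shuffle $\be(a,r')$; the case $r=1$, matching $P_{1}^{1}=(12,\dots)\circ_{1}c-(12,\dots)\circ_{2}c$ and its colorings against the three groups $c^{\si,1}$, $\eps^{\si,i}$, $c^{\si,p}$ of the explicit formula for $d_{1}$, is the template for the general staircase. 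The delicate point throughout is the sign bookkeeping: one must match $(-1)^{\fromto{y}{r}}$, the $P_{1+r}^{0}$-sign and the coloring signs $\sgn_{W}(X)\sgn_{T}(I)(-1)^{\si}$ inherited from $\bar{\phi}\hotimes\phi$ against the sign $(-1)^{r+\be+q-j+1}$ in the higher GS differentials of \cite{DVL}; I expect this is cleanest organized by reading both sides off the same labelled patchwork, as in the rectangular calculus of \S\ref{parGS} and \S\ref{parP}. Everything else in the argument is formal.
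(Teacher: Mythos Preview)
Your reduction is exactly the paper's: extend $R_{c}$ to $\Quiltb[[c]]$ via Lemma \ref{finitesupport}, then reduce the dg-compatibility to the single identity $R_{c}(L_{1}^{j})=d_{j}$ for $j\geq 1$, using that $R$ already intertwines $\partial$ with $\partial_{d_{0}}$. Your treatment of the generator $c$ and of the derivation formalism is fine.

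Where you stop short is precisely where the paper does the real work. You describe the expected picture---one bottom twist rectangle giving $c^{\si,p}$, a staircase of $j-1$ further twists giving a formal path, shuffled against the arguments of $\te$---but you do not actually establish the bijection or verify the signs. The paper's proof supplies two ingredients you would still need. First, for $j\geq 2$ it shows that among all ways of plugging $j$ copies of $c$ into $P^{0}_{j+1}$ only the single pattern $P^{0}_{j+1}\circ_{1}c\circ_{2}c\cdots\circ_{2}c$ survives (relation \eqref{form} kills the rest), and that the surviving quilts are indexed by \emph{binary} trees $T$ on $j-1$ vertices. Second, it constructs an explicit bijection between formal paths $r\in P(R_{p}(\si))$ and such indexed binary trees $(T,J)$, and between $(q{+}1{-}j,j{-}1)$-shuffles $\be$ and the vertical colorings $X$; the sign $(-1)^{r}$ is then matched to the tree-coloring sign by an induction on the path length, tracking how adding the last step $r_{j-1}=(\tau,i_{j-1})$ places the new vertex relative to its predecessor (left, above with index $1$ or $2$, or right). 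The separate case $j=1$ is handled by direct enumeration of the $p{+}2$ colorings. Your sketch has the right geometry but none of this combinatorics or sign analysis, which is the substance of the theorem.
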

\begin{proof}
The representation $R_{c}$ extends to $\Quiltb[[c]]$ due to lemma \ref{finitesupport}.

We verify that $R_{c}\partial'Q = \partial_{d}R_{c}Q $ for $Q\in \Quilt$. It suffices to verify that $R_{c}L_{1}^{j} = d_{j}$ as $\partial$ corresponds to $\partial_{d_{0}} $. Let $\te\in \textbf{C}^{p,q}(\A)$ and $\si \in N_{p+j}(\U)$ and note that we write $|\te|$ for the degree $p+q$ of $\te$ in $\CGS(\A)$.

\proofpart{2}{$R_c L_1^1 = d_1$}

We can write $d_1 = \sum_{s=0}^{p+1}(-1)^{p+q+1+s}d_1^s$ where $d_1^s$ names the $s$th component of $d_1$. We write out the left-hand side
\begin{equation}\label{simplicial}
 R_{c}L^{1}_{1}(\te) = R(P^{0}_{2})(c,\te) - (-1)^{|\te|-1}R(P_{2}^{0})(\te,c) 
\end{equation}
where $P_{2}^{0} = (-1)^{\frac{2\cdot 1}{2} +1} (12,\;\tikzfig{def_c}\;)$. There exists two colorings $(X_0,J_0)$ and $(X_{p+1},J_{p+1})$ of the quilt $Q = (W,T)$ in $P_2^0(c,\te)$ given by the patchworks 
$$(X_1,J_1) = \quad \scalebox{0.85}{$\tikzfig{d_10}$} \qquad (X_{p+1},J_{p+1}) = \quad \scalebox{0.85}{$\tikzfig{d_1last}$} $$
which correspond respectively to $d_1^0$ and $d_1^{p+1}$. We verify their signs:  the sign $\sgn_Q(X_0,J_0)= \sgn_W(X_0) \sgn_T(J_0) (-1)^{ q}$ is determined by two shuffles
\begin{align*}
0_1 1_1 0_2 \ldots (q-1)_2 &\leadsto 0_1 1_1 0_2 \\
1_2 \ldots (p-1)_2 1_1 &\leadsto 1_1 1_2 \ldots (p-1)_2  
\end{align*}
and thus $\sgn_Q(X_0,J_0) = (-1)^{p-1 +q}$. In the second case, we have $\sgn(X_{p+1},J_{p+1}) = \sgn_{W}(X_{p+1}) \sgn_{T}(J_{p+1})(-1)^q $ determined by the two shuffles
\begin{align*}
0_1 1_1 0_2 \ldots (q-1)_2 &\leadsto 0_1 1_1 0_2 \\
1_1 1_2 \ldots (p-1)_2  &\leadsto 1_1 1_2 \ldots (p-1)_2  
\end{align*}
Hence, $\sgn_Q(X_{p+1},J_{p+1}) = (-1)^q$. 

There exists $p$ colorings $(X_i,J_i)_{i=1}^p$ of $P_{2}^{0}$ occuring in the second term of (\ref{simplicial}), given by the patchworks $$ (X_i,J_i) = \quad \scalebox{0.80}{$\tikzfig{d_1i}$}  $$
for $i= 1,\ldots,p$. They correspond to $d_{1}^{p+1-i}$ and we verify their signs: we have $\sgn_Q(X_i,J_i) = \sgn_W(X_i)\sgn_T(J_i)$ determined by the two shuffles
\begin{align*}
0_1 \ldots (q-1)_1 0_2 1_2 &\leadsto 0_1 \ldots (q-1)_1 0_2 1_2 \\
1_1 \ldots (i-1)_1 1_2 i_1 \ldots (p-1)_1 &\leadsto 1_1 \ldots (p-1)_1 1_2 
\end{align*}
Hence, we obtain the sign
$$-(-1)^{p+q-1} \sgn_Q(X_i,J_i) = (-1)^{q-i}$$

\proofpart{2}{$ R_c L_1^j = d_j$ }

First, we name the terms of $d_j$ and write 
$$d_j(\te)^{\si} := \sum_{\be \in \Ss_{q+1-j}} \sum_{r \in P(R_p(\si))} (-1)^{r+\be+q-j+1} d_j(\te)^{\si}(\be,r)$$
such that 
$$ d_j(\te)^{\si}(\be,r) \in \prod_{A \in \A(U_{p})^{q-j+2}} \Hom(\bigotimes_{i=1}^{q} \A(U_{p})(A_{i},A_{i-1}), \A(U_{0})(\si\hs A_{q-j+2},\si\st A_{0}))$$

Next, we note that the only non-vanishing term of $L_{1}^{j}$ is given by $\left(-1\right)^{\left(j-1\right)}P_{j+1}^{0} \circ_{1} c \circ_{2} c \circ_{2} \ldots \circ_{2} c$ as $c$ cannot have any children vertically. Moreover, the only non-vanishing quilts are of the form 
$$Q_{T}= \left(W',T'\right) =\left(123242\ldots 2\left(j+1\right)2, \; \tikzfig{Rc_tree}\;\right)$$
for any binary tree $T$ on $j-1$ vertices whose numbering is compatible with the word component $W$ of $Q_T$. Hence, the possible colorings of $Q_{T}$ are 
$$\left(X,J'\right) = \left(\quad \tikzfig{Rc_mtree_j} \; , \;\tikzfig{Rc_tree_j} \quad \right) \quad = \quad \scalebox{0.85}{$\tikzfig{Rc_Patchwork_j}$}$$
 for $t_{1}<\ldots<t_{j-1}$ in $\lh q \rh$ and $J$ an indexed tree coloring $T$.

As a result, we have that 
$$R_c L_1^j\left(\te\right) = \left(-1\right)^{\left(j-1\right) + \left(j-1\right)\left(p+q-1\right)} R P_{j+1}^0\left(c,\te,c,\ldots,c\right)$$
which sums over the terms 
$$\sgn_{Q_T}\left(X,J'\right) \left(-1\right)^{\frac{\left(j+1\right)j}{2} + 1}  \LL\left(X,J'\right)$$
 We finish the proof by showing that a formal path ${r}$ and a shuffle $\be\in \Ss_{q+1-j,j-1}$ correspond uniquely to such a binary tree $T$ and a coloring $\left(X,J'\right)$ of $Q_{T}$ such that 
$$\left(-1\right)^{r+\be+\left(q+1-j\right)} 
d_j\left(\te\right)^{\si}\left(\be,r\right) = \sgn_{Q}\left(X,J'\right)\left(-1\right)^{\frac{j\left(j+1\right)}{2} +1}\left(-1\right)^{\left(j-1\right) + \left(j-1\right)\left(p+q-1\right)}\LL\left(X,J'\right)\left(c,\te,c\ldots,c\right)\left(\te\right)^{\si}$$
Given a formal path $r=\left(\fromto{r}{j-1}\right)= \left(\left(\tau_1,i_1\right),\ldots,\left(\tau_{j-1},i_{j-1}\right)\right)$ we first define $T$ and its coloring $J$ as trees with vertex set $\{3,\ldots,j+1\}$ inductively:
\begin{itemize}
\item In the degenerate case $j=2$, $r$ is uniquely determined and we set $T$ to be the one vertex tree and $J$ the empty function.
\item  For $j>2$, let $(T_{0},J_{0})$ be the indexed tree corresponding to $\left((\tau_1,i_1),\ldots,(\tau_{j-2},i_{j-2})\right)$, in order to add vertex $j+1$ we have three cases 
\begin{enumerate}
\item if $i_{j-1}< i_{j-2}$, then set $j \tre_{J} (j+1)$ and start over with $(i_{j-1},i_{j+1})$
\item if $i_{j-1} = i_{j-2}$ or $i_{j-1} = i_{j-2}+1$, then let $(j,j+1)\in E_{T}$ and set resp. $J(j,j+1)= 2$ or $= 1$. 
\item if $i_{j-1} > i_{j-2}+1$, then set $(j+1) \tre_J j$ and start over with $(i_{j-1},i_{j+1}-1)$
\end{enumerate}
which we can draw as follows
$$\scalebox{0.9}{$\tikzfig{ordering_c}$}$$
\end{itemize}
Clearly, this process is reversible: given $(T,J)$ we obtain a unique sequence $r\in P\left(R_p(\si)\right)$. By identifying the shuffle $\be \in Sh_{q+1-j,j-1}$ and $X$ via $t_{l} = \be(q-j+1+l)$, we clearly obtain that $\LL(X,I')(\te)^{\si} = d_j(\te)^{\si}(\be,r)$.

The remaining work is to verify the signs: $\sgn_{Q_T}(X,J')$ consists of three components
$$\sgn_{W'}(X) \sgn_{T'}(J') (-1)^{\si}$$
where $(-1)^{\si} = (-1)^q$ as it corresponds to the shuffle
$$0,q,0,\ldots,0,1,p-1,1,\ldots,1 \quad \leadsto \quad 0,1,q,p-1,0,1,\ldots,0,1.$$
The sign $\sgn_{W'}(X)$ corresponds to the shuffle transforming the word
$$\underbracket{0_{(j+1)}\ldots 0_{3} 0_1 1_{1}}_{(1)} \underbracket{0_{2}\ldots \left(\be(t+1)-1\right)_{2}1_{3}\be(t+1)_{2} \ldots 1_{(j+1)}\be(t+j-1)_{2}\ldots (q-1)_{2}}_{(2)},$$
for $t= q-j+1$, into the word
$$0_{1}\ldots (q_1-1)_{1}\ldots 0_{(j+1)}\ldots (q_{j+1}-1)_{(j+1)}.$$
We observe that shuffling the second part $(2)$  
$$\underbracket{0_{2}\ldots \left(\be(t+1)-1\right)_{2}} 1_{3} \underbracket{\be(t+1)_{2} \ldots } 1_{(j+1)} \underbracket{\be(t+j-1)_{2}\ldots (q-1)_{2}} \quad \leadsto \quad \underbracket{ 0_{2}\ldots (q_2-1)_2 } 1_3\ldots 1_{(j+1)}$$
almost corresponds to the shuffle $\be$. However, there is in every interval $\be(s)_2,\ldots,(\be(s+1)-1)_2$ exactly one element too many. We remedy this by moving $1_{(j-2)}$ one place to the right, then $1_{(j-3)}$ two place, and so on. As such, its corresponding sign is $(-1)^{\be + \sum_{i=1}^{j-2} i } = (-1)^{\be + \frac{(j-2)(j-1)}{2}}$. Next, we shuffle 
$$0_{(j+1)} \ldots 0_3 0_1 1_1 0_{2}\ldots (q_2-1)_2 1_3\ldots 1_{(j+1)}\quad \leadsto \quad 0_1 1_1 0_2 \ldots (q-1)_2 0_3 1_3 \ldots 0_{(j+1)} 1_{(j+1)}$$
whose sign is $(-1)^{(j-1)(q+2)}$. Hence, we obtain that 
$$\sgn_{W'}(X) = (-1)^{\be + \frac{(j-1)(j-2)}{2} + (j-1)q}.$$
Next, we determined $\sgn_{T'}(J')$ as the sign of the shuffle
$$ [C_J]1_1 1_2 \ldots (p-1)_2 \quad \leadsto \quad 1_1 1_2 \ldots (p-1)_2 1_3 \ldots 1_{(j+1)},$$
where $[C_J]$ denotes the word obtained from the indexed tree $J$. We will show that the sign corresponding to the shuffle $\chi: [C_J] \leadsto 1_3 \ldots 1_{(j+1)}$ is $(-1)^{r + j-1}$. As a consequence, we obtain that 
$$\sgn_{T'}(J') = (-1)^{r + j-1  + p (j-1)},$$
and thus that 
$$\sgn(X,J') =  (-1)^{\be + r +q +j -1} (-1)^{\frac{(j-1)j}{2} + (j-1)(p+q-1)}.$$
Hence, we have
$$ \sgn_{Q}(X,J')(-1)^{\frac{j(j+1)}{2} +1}(-1)^{(j-1) + (j-1)(p+q-1)} = (-1)^{\be + r + q -j +1 }$$
which completes the proof.

We compute $\chi$ inductively: we have that $[C_J] = A 1_{(j+1)} B$ for certain words $A$ and $B$ and let $\chi_0$ denote the shuffle $AB \leadsto 1_3 \ldots 1_j$. By induction we know that $(-1)^{\chi_0}= (-1)^{r_0 + j-2}$ for the formal path $r_0= (r_1,\ldots,r_{j-2})$ and $(-1)^{r} = (-1)^{r_0 + i_{j-1}}$ where $r_{j-1} = (\si, i_{j-1})$. Moreover, we have $(-1)^{\chi} = (-1)^{\chi_0 + |B|}$ where $|B|$ denotes the length of $B$. We determine $|B|$. First, observe that the sequence associated to two indexed trees 
$$\scalebox{1}{$\tikzfig{twotrees}$}$$
is respectively $1_{h4}1_{h3}$ and $1_{h3}1_{h4}$. Thus, let $3 = v_{1}<_{T} \ldots <_{T} v_{t} = j+1$ be the unique chain of vertices from the root of $T$ to $j+1$, then we can define the numbers
\begin{itemize}
\item $l$ as the the number of vertices to the right of vertex $j+1$ in $T$,
\item $k$ as the number of vertices in the above chain such that $J(v_{g},v_{g+1}) = 1$.
\end{itemize} 
We then easily compute that 
\begin{itemize}
\item the height $i_{j-1}$ of $r_{j-1}=(\partial_{i_{j-1}}\si,i_{j-1})$ is exactly $(1+l)+k$,
\item length of $B$ is $l+k$
\end{itemize}
and thus by induction we obtain
$$(-1)^{\chi} = (-1)^{\chi_{0} + |B|} = (-1)^{\chi_{0} + i_{j-1}+1} = (-1)^{{r}+(j-1)}$$
where the last equality follows from induction.
\end{proof}

\begin{opm}
In the case of presheaves, when looking at the subcomplex $\CGSnr(\A)$ of normalised and reduced cochains, the map $R_{c}$ factorises through $\Quilt_{b}[[m]]$ as $m$ is sent to the identity $1^{u,v}:u\st v\st = (vu)\st$. Only in the case of normalised and reduced cochains does the identity satisfy all the relations on $m$. 

Note however that our induced morphism $\mQuilt \longrightarrow \Quilt_{b}[[m]] \longrightarrow \End(s\CGSnr(\A))$ does not correspond to the $\mQuilt$-algebra structure \cite[Thm. 5.6]{hawkins}.

However, the resulting $\Linf$-algebra structure, in the case of presheaves, does correspond to the one obtained from \cite[Thm. 5.6, Thm. 7.13]{hawkins}. This is essentially due to the multiplication of the prestack being unital. A $\mQuilt$-algebra structure also induces a Gerstenhaber-algebra structure on cohomology \cite[Thm. 6.11]{hawkins}. Writing down the relevant quilts, it is also easy to see that both $\mQuilt$-algebra structures (ours and \cite[Thm. 5.6]{hawkins}) induce the same Gerstenhaber-algebra structure on cohomology. 
\end{opm}

\subsection{Deformations of prestacks}\label{pardef}

Let $k$ be a field of characteristic $0$. Let $(\mathbf{C}, d, (L_n)_{n \geq 2})$ be an $L_{\infty}$-algebra. By definition, the Maurer-Cartan equation for $\te \in \mathbf{C}$ is given by
$$MC(\te) = d(\te) + \sum_{n=1}^{\infty}\frac{1}{n!} L_{n}(\te,\ldots,\te).$$
We consider the set of (degree $1$) Maurer-Cartan elements ${MC}(\mathbf{C}) = \{ \te \in \mathbf{C}^1 \,\, | \,\, MC(\te) = 0\}$ and for the appropriate notion of gauge equivalence (see \cite{kontsevich}), we consider
$$\underline{MC}(\mathbf{C}) = \{ \te \in \mathbf{C}^1 \,\, | \,\, MC(\te) = 0\} /\sim.$$
This gives rise to a functor $\underline{MC}_{\mathbf{C}}: \mathrm{Art} \longrightarrow \mathsf{Set}: (R, \mathfrak{m}) \longmapsto \underline{MC}(\mathfrak{m} \otimes \mathbf{C})$ on the category $\mathrm{Art}$ of Artin local $k$-algebras.

Consider the GS-complex $(\mathbf{C}_{GS}(\A), d)$ of a prestack $(\A,(m+f+c))$.
In \cite[Thm. 3.19]{DVL} it is shown that normalised reduced $2$-cocycles in $\mathbf{C}_{GS}(\A)$ correspond to first order deformations of $\A$, that is, deformations in the direction of $R = k[\epsilon]$. More precisely, for $(m', f', c')\in \CGS^{2}(\A)$, we have that $(\A[\epsilon],m+m'\eps,f+f'\eps,c+c'\eps)$ is a first-order deformation of $(\A,m,f,c)$ if and only if $d(m', f', c')=0$ and $(m', f', c')$ is normalised and reduced.
Further, it is shown in loc. cit. that the cohomology of the GS-complex classifies the first-order deformations of $\A$ up to equivalence. 

Putting Theorems \ref{maintheorem} and \ref{thmRc} together, $s\CGS(\A)$ is endowed with an $L_{\infty}$-structure which can be used to obtain higher order versions of these results.

For normalised, reduced cochains, it will be convenient to express the MC-equation in terms of the unsymmetrised components $(P_{n})_{n\geq1}$ from Definition \ref{P}.
The following characteristic-free expression of the MC-equation should be seen as the counterpart of the equation $MC(\te) = d(\te) + \te\{ \te\}$ for the first brace operation (or ``dot product'') on the Hochschild complex of an algebra.
Note that we omit writing $R_{c}$ and consider everything as elements of $\End(s\CGS(\A))$.

\begin{prop}
For a reduced and normalised cochain $\te = (m',f',c') \in s\CGS^{1}(\A)$, we have $$MC(\te) =  d(\te) + P_{1}(\te) + P_{2}(\te,\te) + P_{3}(\te,\te,\te,\te) + P_{4}(\te,\te,\te,\te).$$
 In particular, the MC-equation is quartic.
\end{prop}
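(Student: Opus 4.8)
The plan is to rewrite the Maurer--Cartan polynomial as a sum of the unsymmetrised operations $P_{n}$ of Definition \ref{P}, and then to show that on a reduced, normalised cochain all $P_{n}$ with $n\geq 5$ contribute nothing.

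First I would pass from the $L_{n}$ to the $P_{n}$. By Definition \ref{P}, $L_{n}=\sum_{\si\in\Ss_{n}}(-1)^{\si}(P_{n})^{\si}$ for every $n\geq 1$ (with $L_{1}=P_{1}$, the group $\Ss_{1}$ being trivial). Evaluating the corresponding operation of $\End(s\CGS(\A))$ on $n$ copies of the single degree-$1$ element $\te$, the permutation sign $(-1)^{\si}$ is cancelled precisely by the Koszul sign incurred in reordering $n$ copies of an odd element, whence $L_{n}(\te,\dots,\te)=n!\,P_{n}(\te,\dots,\te)$. As $k$ has characteristic $0$, substituting into $MC(\te)=d(\te)+\sum_{n\geq 1}\tfrac{1}{n!}L_{n}(\te,\dots,\te)$ gives
\[
MC(\te)=d(\te)+\sum_{n\geq 1}P_{n}(\te,\dots,\te),
\]
so it suffices to prove $P_{n}(\te,\dots,\te)=0$ for $n\geq 5$.

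For the vanishing I would unwind $P_{n}=\sum_{r\geq 0}P_{n}^{r}$ through $R_{c}$. By Definition \ref{P}, $R_{c}(P_{n}^{r})$ is a signed sum, over the top-degree quilts $Q=(W,T)\in\Quilt(n+r)$ (those with $\deg(W)=n+r-2$) with $r$ copies of the twist $c$ substituted at admissible (binary, non-repeated) vertices, of the operations attached to $Q$ via $R$; by Proposition \ref{propQP} and Theorem \ref{thmPGS} each of those is in turn a signed sum of patchworks $\LL(X,J)$ with $X\in\Clr(W,-)$ and $J\in\Clr(T,-)$. Since a top-degree word of arity $n+r$ admits colourings with exactly $(n+r)-\deg(W)-1=1$ instance of $m$, every such patchwork tiles its output rectangle using the $n$ blocks $\te$, the $r$ twist blocks $c$ and a single multiplication block $m$. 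Decomposing $\te=m'+f'+c'$ into its components in $\textbf{C}^{0,2}$, $\textbf{C}^{1,1}$, $\textbf{C}^{2,0}$, each of these blocks, like each $c$ and the $m$, has bidegree $(p_{a},q_{a})$ with $p_{a}+q_{a}=2$, and since $R_{c}$ preserves degree the output bidegree $(p,q)$ satisfies $p+q=3$; thus the output simplex carries at most three arrows of $\U$ and the output tuple at most three morphisms of $\A(U_{p})$.

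The heart of the argument is then a combinatorial step: using the planarity relations defining $\Patch$ together with the explicit interpolating simplices $\si_{a}(i)$ and $\si(a,b)$ of Construction \ref{LL}, one shows that as soon as $n\geq 5$ any admissible arrangement of the blocks forces some block $\te_{a}$ to be evaluated on a subsimplex $\zeta_{J,a}(\si)$ containing an identity arrow, or on a tuple containing an identity morphism, or to be composed with a block whose objects or restriction functors do not match --- so that summand vanishes because $\te$ is reduced and normalised (or for type reasons). Hence $P_{n}(\te,\dots,\te)=0$ for $n\geq 5$, and combining with the first step,
\[
MC(\te)=d(\te)+P_{1}(\te)+P_{2}(\te,\te)+P_{3}(\te,\te,\te)+P_{4}(\te,\te,\te,\te),
\]
i.e.\ the Maurer--Cartan polynomial is quartic. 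I expect this last combinatorial bound $n\leq 4$ to be the real obstacle: the bidegree arithmetic by itself does not bound $n$ (an $f'$-block has degree-neutral bidegree $(1,1)$, and horizontal or vertical chains of such blocks stay within bidegree $(1,1)$), so one must genuinely exploit that a reduced, normalised $\te$ cannot absorb a degenerate simplex or morphism; I would organise the count on the rectangular pictures, treating the four possible output shapes $(0,3),(1,2),(2,1),(3,0)$ separately.
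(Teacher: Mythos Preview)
Your reduction from $L_n$ to $P_n$ is correct and matches the paper (which records the same fact as $Q^{\si}(\te,\ldots,\te)=(-1)^{\si}Q(\te,\ldots,\te)$ for $|\te|=1$). The gap is precisely where you flag it: the ``combinatorial step'' is described but not carried out, and the strategy you propose for it --- case analysis on the four output bidegrees, working at the level of patchworks and interpolating simplices --- is not how the paper proceeds, and would be considerably harder to push through. Your own worry about chains of $f'$-blocks is the symptom: nothing in a bidegree count prevents arbitrarily long such chains, so the bound must come from elsewhere.

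The paper's argument stays at the quilt level and never splits on $(p,q)$. The point is that a quilt $Q=(W,T)$ of top degree $N-2$ in $\downarrow$-order has an essentially forced shape: $W$ begins $12\ldots 2$ with $1$ non-repeated, and for $N\geq 4$ the quilt axioms force vertex $1$ to have two children in $T$ while vertices $3,4$ are interposed in $2$, so $W=12324\ldots 2$. This rigidity pins down the admissible inputs \emph{before} any colouring: vertex $1$ must receive something with $p_1=2$ (only $c'$ or $c$), vertex $2$ something with $q_2\geq 2$ (only $m'$). Since $2$ is a child of $1$ in $T$ and $m'$ has $p_2=0$, the $2$-simplex fed to vertex $1$ is degenerate, and $c'$ reduced kills the term; this gives $Q(\te,\ldots,\te)=0$ already for $N\geq 4$. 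One further pass of the same recursion, now allowing $c$ at vertex $1$ and repeating the analysis at vertex $3$ with $m'$ normalised playing the analogous role, handles the terms with $c$-insertions and shows that the only surviving contributions at $n=5$ are already absorbed into $P_4$.

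So the missing ingredient is this structural description of top-degree quilts: it is what converts your correct intuition (``some block must see a degenerate simplex or morphism'') into an actual argument, and it does so uniformly rather than shape-by-shape. The ``type mismatch'' alternative you list does not occur --- patchworks are built to type-check --- so you should drop it.
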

\begin{proof}
First note that as $|\te| =1$, that $Q^{\si}(\te,\ldots,\te) = (-1)^{\si}Q(\te,\ldots,\te)$. Moreover, as we can consider $c\in \textbf{C}^{2,0}(\A)$, the MC-equation consists of $d_0( \te')$ and summations of $Q(\nth{\te})$ for quilts $Q$ of degree $n-2$ and $\te_{i} \in \CGS^{2}(\A)$. 
Let $Q=(W,T)$ be such a quilt, part of some $P_{n}^{0}$, then $W= 12\ldots2$. For $n\geq 3$, we know that $3$ is also a child in $T$ of $1$. As $1$ can have at most two children in $T$, for $n \geq 4$, we thus have $4$ and $3$ are children of $2$ in $W$, i.e. $W= 12324\ldots2$.
$$Q = \tikzfig{MC_1}$$
 In this case, only the elements $c'$ or $c$, and $m'$ can be inserted in $Q$ respectively in $1$ and $2$, with $2$ a child of $1$ in $T$. As $c'$ is reduced, this means that $Q(\te,\ldots,\te) =0$ for $n\geq 4$.

As $c$ is not necessarily reduced, more quilts are possible. Note that we write $\te[i]$ to refer to the element $\te$ inserted in vertex $i$. The above reasoning still applies and we can once more apply this reasoning to obtain for $n\geq 6$ that $W= 12324546\ldots42$ and $1$ and $3$ have two children in $T$.
$$Q = \tikzfig{MC_2}$$
This means that $2$ and $4$ need be inserted by $m'$ and hence $3$ and $1$ by either $c'$ or $c$. In case either one is $c'$, we already know it is zero as $c'$ is reduced. Thus, consider vertex $1$ and $c$ inserted by $c$, then, as $m'[4]$ is a child of $c[3]$ in $T$, $c[3]$ is the unit of the corresponding category $\A(U)$ and it is plugged into $m'[2]$ which is normalised, whence we obtain that $Q(\te_{1},\ldots,\te_{n}) =0$ for $n \geq 6$. Hence, $P_n^0(\nth{\te}) =0$ for $n \geq 6$.

Combining the above reasonings, we have that $P_{n}(\te,\ldots,\te) = 0$ for $n> 5$. In the case $n=5$ there has to be at least one $c$ present and thus the non-zero terms are contained in $P_{4}$.
\end{proof}

\begin{prop}\label{deformation}
For a prestack $(\A,m,f,c)$ and $\te = (m', f', c') \in s\CGS^1(\A)$, we have that $(\A,m+m',f+f',c+c')$ is a prestack if and only if $MC(\te) = 0$ and $\te$ is normalised and reduced.\end{prop}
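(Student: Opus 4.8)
The plan is to translate the defining conditions of a prestack directly into the language of the Maurer--Cartan equation for the $L_\infty$-structure just constructed. Recall from \cite[Thm.\ 3.19]{DVL} (and its higher-order refinement indicated at the end of \S\ref{pardef}) that the relevant prestack axioms for $(\A, m+m', f+f', c+c')$ are: (i) associativity of the deformed local compositions $m^U + (m')^U$; (ii) functoriality of the deformed restrictions $f^u + (f')^u$; (iii) the naturality, coherence (cocycle) and identity-normalization conditions on the deformed twists $c^{u,v} + (c')^{u,v}$. First I would observe that a degree-$1$ cochain $\te = (m', f', c')$ being normalised and reduced is built into the claim on the nose; so the content is the equivalence between the remaining equations and $MC(\te) = 0$. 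Using the preceding proposition, $MC(\te) = d(\te) + P_1(\te) + P_2(\te,\te) + P_3(\te,\te,\te) + P_4(\te,\te,\te,\te)$, so I would expand each term under $R_c$ via the explicit descriptions of $P_n^0$ (and of the $c$-decorated $P_n^r$) together with the rectangular/patchwork calculus of Construction \ref{LL}.

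The key steps, in order: First, unravel $d(\te) = d_0(\te) + d_1(\te) + \sum_{j\ge 2} d_j(\te)$ into its bidegree components, using the explicit formulas for $d_j$ from \S\ref{parGS} and the identification $R_c L_1^j = d_j$ established in Theorem \ref{thmRc}. Second, match the bidegree-$(0,2)$-to-$(0,3)$ part of $MC(\te) = 0$, i.e.\ the $\textbf{C}^{0,3}(\A)$-component, with the associativity of $m^U + (m')^U$: here $d_0(m')$ contributes the Hochschild differential of $m'$ and the quilt terms in $P_2(\te,\te)$ contribute the quadratic term $m'\{m'\}$, so vanishing of this component is exactly associativity of the deformed multiplication. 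Third, match the $\textbf{C}^{1,q}$-components: the $d_1$-part of $d(\te)$ together with the $P_n$-terms that mix $m'$ and $f'$ (and at most one $c$ or $c'$) encode functoriality of $f+f'$ and the naturality of $c+c'$; this is where one must be careful with which $P_n$ contribute, but the preceding proposition has already cut the list down to $P_1,\dots,P_4$. Fourth, match the $\textbf{C}^{2,0}$-component: the terms in $MC(\te)$ landing in $\textbf{C}^{2,0}(\A)$ — coming from $d_2$ on $c'$, from $P_2(\te,\te)$ and from the $c$-decorated pieces $L_1^1, L_1^2$ acting on $c'$ — must reproduce exactly the coherence (cocycle) identity for $c+c'$; this uses relations \eqref{cocycle} and \eqref{form} from Definition \ref{defc}. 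Fifth, assemble: $MC(\te) = 0$ holds iff all of these components vanish iff all the prestack axioms for the deformed data hold, given that $\te$ is normalised and reduced (the normalisation being needed for the identity conditions $c^{u,v} = 1$ when $u$ or $v$ is an identity, and reducedness matching $(1_U)\st = 1_{\A(U)}$).

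The main obstacle I expect is the careful bookkeeping in the third and fourth steps: verifying that, after applying $R_c$, the sum of the quilt-with-$c$ contributions in $P_2(\te,\te)$, $P_3(\te,\te,\te)$, $P_4(\te,\te,\te,\te)$ — each with its Koszul sign from Definition \ref{P} and its patchwork sign from Construction \ref{LL} — reassembles precisely into the naturality square for $c^{u,v}$ and the coherence pentagon-type identity, with no spurious terms and correct signs. This is genuinely the place where the specific sign conventions (the $(-1)^{\fromto{y}{r}}$ shuffle signs, the $\sgn_Q(X,J)$ signs, and the signs in the $d_j$) all have to conspire; fortunately the hard sign identities have already been proven abstractly in Lemma \ref{mainlemma} and in the proof of Theorem \ref{thmRc}, so in practice one only needs to expand $MC(\te)$ on cochains of bidegree $(2,0)$, $(1,q)$ and $(0,q)$ separately and compare termwise with the axioms as spelled out in \cite{DVL}. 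A clean way to present this is to remark that the equivalence is componentwise in the bidegree decomposition $\CGS^2(\A) = \bigoplus_{p+q=2}\textbf{C}^{p,q}(\A)$, treat the three cases $(p,q) \in \{(0,2),(1,1),(2,0)\}$, and in each case cite the corresponding equation in \cite[\S 3]{DVL} — e.g.\ the part in \cite[Thm.\ 3.19]{DVL} for the first-order truncation together with the quadratic, cubic and quartic corrections supplied by $P_2, P_3, P_4$.
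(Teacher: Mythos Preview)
Your overall strategy matches the paper's: expand $MC(\te)$ using the preceding proposition, split into bidegree components, and identify each component with a prestack axiom for $(\A,m+m',f+f',c+c')$. The paper carries this out by a direct termwise computation.

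However, your bidegree bookkeeping is off by one throughout. Since $\te \in s\CGS^1(\A) = \CGS^2(\A)$, the Maurer--Cartan expression $MC(\te)$ lands in $\CGS^3(\A)$, not $\CGS^2(\A)$. Hence the relevant decomposition is $\CGS^3(\A) = \bigoplus_{p+q=3} \textbf{C}^{p,q}(\A)$ and there are \emph{four} cases $(0,3),(1,2),(2,1),(3,0)$, not the three you list. Concretely: the $(0,3)$-component gives associativity of $m+m'$; the $(1,2)$-component gives functoriality of $f+f'$ with respect to $m+m'$; the $(2,1)$-component gives naturality of $c+c'$; and the coherence (cocycle) identity for $c+c'$ lives in $\textbf{C}^{3,0}(\A)$, not in $\textbf{C}^{2,0}(\A)$ as you write (it involves a $3$-simplex $(u_1,u_2,u_3)$). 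Your remark about ``$d_2$ on $c'$'' in the fourth step is another symptom: $d_2$ sends $\textbf{C}^{2,0}$ to $\textbf{C}^{4,-1}=0$; the differential piece that actually contributes to the $(3,0)$-component is $d_1(c') + d_2(f') + d_3(m')$. Once the bidegrees are corrected, your plan is exactly the paper's computation; the paper writes out each of the four components explicitly, collecting the $P_n^r$-terms that survive and showing they assemble into the corresponding axiom for the deformed data.
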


\begin{proof}
Using the fact that $\te= (m', f', c') \in \CGS^{2}(\A)$ is reduced and normalised, we compute $MC(\te)$ and look at each component $MC(\te)_{[p,q]} \in \textbf{C}^{p,q}(\A)$ for $p+q=3$. We will use that $c^{1,-} =1 = c^{-,1}$, $m$ is unital and that $(m,f,c)$ satisfy the axioms of a prestack. Note that for a cochain $\te= (\te^\si(A))_{\si,A}$, we omit writing the set of objects $A$ explicitly where possible in order to lighten the equations below.

For $(p,q) = (0,3)$, let $ U \in \U$ and $A=(A_0,A_1,A_2,A_3)$ objects in $\A(U)$, then we compute
\begin{align*}
MC(\te)_{[0,3]}^{U}(A) &= d\te_{[0,3]}^{U}(A) +P_{2}(\te,\te)_{[0,3]}^{U}(A) + P_{3}(\te,\te,\te)_{[0,3]}^{U}(A) + P_{4}(\te,\te,\te,\te)_{[0,3]}^{U}(A) \\
&= d_0(m') + P_3^0(c,m',m') \\
 &=  m^{U}\circ( 1^{U} ,  m^{'U}) - m^{'U} \circ ( m^{U} , 1^{U}) + m^{'U} \circ ( 1^{U} ,  m^{U} ) - m^{U} \circ ( m^{'U} , 1^{U})  \\
 &- m^U \circ ( c^{1_U,1_U}(A_0) , m^{'U} \circ (m^{'U} , 1^U) ) +  m^U \circ ( c^{1_U,1_U}(A_0) , m^{'U} \circ (1^U, m^{'U} ) ) \\
 &= (m^U+m^{'U})\circ \left(1^U, (m^U+m^{'U})\right) - (m^U+m^{'U}) \circ \left( (m^U+m^{'U}), 1^U\right) 
\end{align*}

For $(p,q) = (1,2)$, let $u:U_0 \rightarrow U_1$ in $\U$ and $A=(A_0,A_1,A_2)$ objects in $\A(U_1)$, then we compute
\begin{align*}
d\te_{[1,2]}^u(A) &= d_0(f')^u(A)+ d_1(m')^u(A)\\
 &= m^{U_0} \circ (f^u , f^{'u}) - f^{'u} \circ m^{U_1} + m^{U_0} \circ (f^{'u},f^u) 
 + m^{U_0} \circ \left( c^{u,1_{U_0}}(A_0) , m^{'U_0} \circ ( f^u, f^u) \right) \\
&- m^{U_0} \circ \left( c^{1_{U_1}, u}(A_0), f^u \circ m^{'U_1} \right) 
\end{align*}
\begin{align*}
P_{2}(\te,\te)_{[1,2]}^u(A) &= P_2^0(f',f')^u(A) + P^0_3(c,m',f')^u(A) + P^0_3(c,f',m')^u(A) \\
&=m^{U_0} \circ ( f^{'u}, f^{'u}) + m^{U_0} \circ \left( c^{u,1_{U_0}}(A_0), m^{'U_0} \circ (f^{'u}, f^u) \right) + m^{U_0} \circ \left( c^{u,1_{U_0}}(A_0), m^{'U_0}(f^u,f^{'u}) \right) \\
&- m^{U_0}\circ \left( c^{1_{U_1},u}(A_0), f^{'u}\circ m^{'U_1} \right)
\end{align*}
\begin{align*}
P_{3}(\te,\te,\te)_{[1,2]}^u(A) &= P^0_4(c,m',f',f')^u(A) =  m^{U_0}\circ \left( c^{u,1_{U_0}}(A_0),m^{'U_0}\circ(f^{'u},f^{'u}) \right)\\
P_{4}(\te,\te,\te,\te)_{[1,2]}^u(A) &= 0
\end{align*}
and thus 
$$MC(\te)_{[1,2]}^u(A) = -(f^u+f^{'u}) \circ (m^{U_0}+m^{'U_0}) + (m^{U_0}+m^{'U_0})\circ \left((f^u+f^{'u})\otimes (f^u+f^{'u})\right)$$

For $(p,q)= (2,1)$, let $\si=(U_0 \overset{u_1}{\rightarrow} U_1 \overset{u_2}{\rightarrow} U_2)$ be a $2$-simplex in $\U$ and $A=(A_0,A_1)$ objects in $\A(U_2)$, then we compute 
\begin{align*}
d\te_{[2,1]}^\si(A) &= d_0(c')^\si(A) + d_1(f')^\si(A) + d_2(m')^\si(A)\\
&= m^{U_0} \circ \left( f^{u_2u_1} , c^{'\si}(A_1) \right) - m^{U_0}\circ \left( c^{'\si}(A_0) , f^{u_2} \circ f^{u_2} \right) - m^{U_0}  \circ \left( c^\si(A_0) , f^{u_1} \circ f^{'u_2} \right) \\
&+ m^{U_0} \circ \left(f^{'u_2u_1} ,c^\si(A_1) \right) - m^{U_0} \circ \left( c^\si(A_0), f^{'u_1}\circ f^{u_2} \right)\\
&- m^{U_0} \circ \left( c^{1_{U_0},u_2u_1}(A_0), m^{'U_0} \circ \left( c^{u_1,u_2}(A_0) , f^{u_2} \circ f^{u_1} \right) \right)\\
&+ m^{U_0} \circ \left( c^{1_{U_0},u_2u_1}(A_0) , m^{'U_0} \circ \left(f^{u_2u_1} , c^{u_1,u_2}(A_1) \right) \right) 
\end{align*}
\begin{align*}
P_{2}(\te,\te)_{[2,1]}^\si(A) &= P_2^0(f',c')^\si(A) + P_2^0(c',f')^\si(A) + P_3^0(c,f',f')^\si(A) + P_3^0(c,m',c')^\si(A)\\
&+  P_4^0(c,m',f',c)^\si(A)  + P_4^0(c,m',c,f')^\si(A)   \\
&= m^{U_0}\circ\left( f^{'u_2u_1}, c^{'u_1,u_2}(A_1)\right) - m^{U_0} \circ \left( c^{'u_1,u_2}(A_0) , f^{'u_1}\circ f^{u_2} \right) - m^{U_0} \circ \left( c^{'u_1,u_2}(A_0), f^{u_1}\circ f^{'u_2} \right)\\
&- m^{U_0} \circ \left( c^{u_1,u_2}(A_0), f^{'u_1} \circ f^{'u_2} \right) + m^{U_0} \circ \left( c^{1_{U_0},u_2u_1}(A_0), m^{'U_0} \circ \left( f^{u_2u_1}, c^{'u_1,u_2}(A_1) \right) \right) \\
&- m^{U_0} \circ \left( c^{1_{U_0}, u_2u_1}(A_0), m^{'U_0} \circ\left( c^{'u_1,u_2}(A_0), f^{u_1} \circ f^{u_2} \right) \right)\\
&+m^{U_0} \circ \left( c^{1_{U_0},u_2u_1}(A_0), m^{'U_0}\circ \left( f^{'u_2u_1}, c^{u_1,u_2}(A_1) \right) \right) \\
&- m^{U_0} \circ \left( c^{1_{U_0}, u_2u_1}(A_0), m^{'U_0}\circ\left( c^{u_1,u_2}(A_0), f^{u_1} \circ f^{'u_2} \right) \right) \\
&- m^{U_0} \circ \left( c^{1_{U_0}, u_2u_1}(A_0), m^{'U_0}\circ \left( c^{u_1,u_2}(A_0), f^{'u_1} \circ f^{u_2} \right)\right)
\end{align*}
\begin{align*}
P_{3}(\te,\te,\te)_{[2,1]}^\si(A) &= P_3^0(c',f',f')^\si(A) + P_4^0(c,m',f',c')^\si(A) +  P_4^0(c,m',c',f')^\si(A) + P_5^0(c,m',c,f',f')^\si(A)  \\
&=- m^{U_0} \circ \left( c^{'u_1,u_2}(A_0) , f^{'u_1} \circ f^{'u_2} \right) +m^{U_0} \circ \left( c^{1_{U_0},u_2u_1}(A_0), m^{'U_0} \circ \left( f^{'u_2u_1}, c^{'u_1,u_2}(A_1) \right) \right)\\
&-m^{U_0} \circ \left( c^{1_{U_0}, u_2u_1}(A_0), m^{'U_0}\circ\left( c^{'u_1,u_2}(A_0), f^{'u_1} \circ f^{u_2} \right) \right)\\
&-m^{U_0} \circ \left( c^{1_{U_0}, u_2u_1}(A_0), m^{'U_0}\circ \left( c^{'u_1,u_2}(A_0), f^{u_1} \circ f^{'u_2} \right)\right)\\
&- m^{U_0} \circ \left( c^{1_{U_0}, u_2u_1}(A_0), m^{'U_0} \circ\left( c^{u_1,u_2}(A_0), f^{'u_1} \circ f^{'u_2} \right) \right)
\end{align*}
\begin{align*}
P_4(\te,\te,\te,\te)_{[2,1]}^\si(A) &= P_5^0(c,m',c',f',f')^\si(A) = -m^{U_0} \circ \left( c^{1_{U_0}, u_2u_1}(A_0), m^{'U_0}\circ \left( c^{'u_1,u_2}(A_0), f^{'u_1} \circ f^{'u_2} \right) \right)
\end{align*}
and thus 
\begin{align*}
 MC(\te)_{[2,1]}^\si(A) &=\left( m^{U_0} + m^{'U_0} \right) \circ \left( \left( f^{u_2u_1} + f^{'u_2u_1} \right) , \left( c^{u_1,u_2}(A_0) + c^{'u_1,u_2}(A_0) \right) \right) \\
 &- \left( m^{U_0} + m^{'U_0} \right) \circ \left( \left( c^{u_1,u_2}(A_0) + c^{'u_1,u_2}(A_0) \right) , \left( f^{u_1} + f^{'u_1} \right) \circ \left( f^{u_2} + f^{'u_2} \right) \right)
\end{align*}

Lastly, for $(p,q) = (3,0)$, let $\si=(U_0 \overset{u_1}{\rightarrow} U_1 \overset{u_2}{\rightarrow} U_2 \overset{u_3}{\rightarrow} U_3)$ be a $3$-simplex in $\U$ and $A_0$ an object in $\A(U_2)$, then we compute
\begin{align*}
d\te_{[3,0]}^\si(A_0) &= d_3(m')^\si(A_0) + d_2(f')^\si(A_0) + d_1(c')^\si(A_0)\\
&= m^{U_0} \circ \left( c^{1_{U_0},u_3u_2u_1}(A_0), m^{'U_0} \circ \left( c^{u_2u_1,u_3}(A_0), c^{u_1,u_2}(u_3\st A_0) \right) \right) - m^{U_0} \circ \left( c^{u_1,u_3u_2}(A_0), f^{'u_1} \circ c^{u_2,u_3}(A_0) \right) \\
&- m^{U_0} \circ \left( c^{u_1,u_3u_2}(A_0), f^{u_1} \circ c^{'u_2,u_3}(A_0) \right) + m^{U_0} \circ \left( c^{u_1,u_3u_2}(A_0), c^{'u_1,u_2}(A_0) \right)  \\
&+ m^{U_0} \circ \left( c^{'u_2 u_1,u_3}(A_0), c^{u_1,u_2}(u_3\st A_0) \right) - m^{U_0} \circ \left( c^{u_1,u_3u_2}(A_0), f^{u_1} \circ c^{'u_2,u_3}(A_0) \right)
\end{align*}
\begin{align*}
P_{2}(\te,\te)_{[3,0]}^\si(A_0) &= P_2^0(c',c')^\si(A_0) + P_3^0(c,f',c')^\si(A_0) + P_3^0(c',f',c)^\si(A_0) +P_4^0(c,m',c',c)^\si(A_0) +P_4^0(c,m',c,c')^\si(A_0)\\
&+ P_5^0(c,m',c,f',c)^\si(A_0)\\
&=m^{U_0} \circ \left( c^{'u_2u_1,u_3}(A_0) , c^{'u_1,u_2}( u_3\st A_0) \right) - m^{U_0} \circ \left( c^{'u_1,u_2}(A_0), f^{u_1} \circ c^{'u_2,u_3}(A_0) \right) \\ 
& - m^{U_0} \circ \left( c^{u_1,u_2}(A_0), f^{'u_1} \circ c^{'u_2,u_3}(A_0) \right)  - m^{U_0} \circ \left( c^{'u_1,u_2}(A_0), f^{'u_1} \circ c^{u_2,u_3}(A_0) \right) \\ 
&+m^{U_0} \circ \left( c^{1_{U_0},u_3u_2u_1}(A_0), m^{'U_0} \circ \left( c^{'u_2u_1,u_3}(A_0), c^{u_1,u_2}( u_3\st A_0) \right) \right)\\ 
&-m^{U_0} \circ \left( c^{1_{U_0},u_3u_2u_1}(A_0), m^{'U_0} \circ \left( c^{'u_1,u_3u_2}(A_0), f^{u_1} \circ c^{u_2,u_3}( A_0) \right) \right)\\ 
&+m^{U_0} \circ \left( c^{1_{U_0},u_3u_2u_1}(A_0), m^{'U_0} \circ \left( c^{u_2u_1,u_3}(A_0), c^{'u_1,u_2}( u_3\st A_0) \right) \right)\\ 
&- m^{U_0} \circ \left( c^{1_{U_0},u_3u_2u_1}(A_0), m^{'U_0} \circ \left( c^{u_1,u_3u_2}(A_0), f^{u_1} \circ c^{'u_2,u_3}( A_0) \right) \right)\\
&-m^{U_0} \circ \left( c^{1_{U_0},u_3u_2u_1}(A_0), m^{'U_0} \circ \left( c^{u_1,u_3u_2}(A_0), f^{u_1} \circ c^{'u_2,u_3}( A_0) \right) \right)\\
&- m^{U_0} \circ \left( c^{1_{U_0},u_3u_2u_1}(A_0), m^{'U_0} \circ \left( c^{u_1,u_3u_2}(A_0), f^{'u_1} \circ c^{u_2,u_3}( A_0) \right) \right)
\end{align*}
\begin{align*}
P_{3}(\te,\te,\te)_{[3,0]} &= P_3^0(c',f',c')^\si(A_0) +  P_4^0(c,m',c',c')^\si(A_0)+ P_5^0(c,m',c',f',c)^\si(A_0) \\
&+P_5^0(c,m',c,f',c')^\si(A_0) \\
&= - m^{U_0} \circ \left( c^{'u_1,u_3u_2}(A_0) , f^{'u_1} \circ c^{'u_2,u_2}(A_0) \right)\\
&+m^{U_0} \circ \left( c^{1_{U_0},u_3u_2u_1}(A_0), m^{'U_0} \circ \left( c^{'u_2u_1,u_3}(A_0), c^{'u_1,u_2}( u_3\st A_0) \right) \right) \\
&- m^{U_0} \circ \left( c^{1_{U_0},u_3u_2u_1}(A_0), m^{'U_0} \circ \left( c^{'u_1,u_3u_2}(A_0), f^{u_1} \circ c^{'u_2,u_3}( A_0) \right) \right)\\
&- m^{U_0} \circ \left( c^{1_{U_0},u_3u_2u_1}(A_0), m^{'U_0} \circ \left( c^{'u_1,u_3u_2}(A_0), f^{'u_1} \circ c^{u_2,u_3}( A_0) \right) \right)\\
&- m^{U_0} \circ \left( c^{1_{U_0},u_3u_2u_1}(A_0), m^{'U_0} \circ \left( c^{u_1,u_3u_2}(A_0), f^{'u_1} \circ c^{'u_2,u_3}( A_0) \right) \right)
\end{align*}
\begin{align*}
P_{4}(\te,\te,\te,\te)_{[3,0]}^\si(A_0) &= P_5^0(c,m',c',f',c')^\si(A_0) =  - m^{U_0} \circ \left( c^{1_{U_0},u_3u_2u_1}(A_0), m^{'U_0} \circ \left( c^{u_1,u_3u_2}(A_0), f^{'u_1} \circ c^{'u_2,u_3}( A_0) \right) \right)
\end{align*}
and thus 
\begin{align*}
MC(\te)_{[3,0]}^\si(A_0) &= \left( m^{U_0} + m^{'U_0} \right) \circ \left( \left( c^{u_2u1_,u_3}(A_0) + c^{u_2u_1,u_3}(A_0) \right) , \left( c^{u_1,u_2}(u_3\st A_0) + c^{'u_1,u_2}(u_3\st A_0) \right) \right) \\
&-\left( m^{U_0} + m^{'U_0} \right) \circ \left( \left( c^{u1_,u_3u_2}(A_0) + c^{u_1,u_3u_2}(A_0) \right) , \left( f^{u_1} + f^{'u_2} \right) \circ \left( c^{u_2,u_3}( A_0) + c^{'u_2,u_3}(A_0) \right) \right)
\end{align*}
These computations show that $MC(\te) = 0$ if and only if $(\A,m+m',f+f',c+c')$ is a prestack. 
\end{proof}
\begin{opm}
Note that for the functor condition we only need the cubic part and for the twists the full quartic part of the equation.
\end{opm}

Based upon Proposition \ref{deformation}, with some more work taking gauge equivalence into account, one may extend \cite[Thm. 3.19]{DVL} to higher order deformations. Recall that for $(R, \mathfrak{m}) \in \mathrm{Art}$ an $R$-deformation of a $k$-linear prestack $(\A,m,f,c)$ is an $R$-linear prestack $(R \otimes_k \A, \bar{m}, \bar{f}, \bar{c})$ of which the algebraic structure reduces to that of $\A$ modulo $\mathfrak{m}$, and an equivalence of deformations is an isomorphism between the deformed prestacks which reduces to the identity morphism. Let $\mathrm{Def}_{\A}: \mathrm{Art} \longrightarrow \mathsf{Set}$ be the deformation functor of $\A$ with 
$\mathrm{Def}_{\A}(R, \mathfrak{m})$ the set of $R$-deformations of $\A$ up to equivalence of deformations.
The following theorem, of which the proof will appear elsewhere, expresses that the deformation theory of $\A$ is controlled by the $L_{\infty}$-algebra $s\CGS(\A)$.

\begin{theorem}
Let $(\A,m,f,c)$ be a prestack. There is a natural isomorphism of functors $\mathrm{Art} \longrightarrow \mathsf{Set}$:
 $$\mathrm{Def}_{\A} \cong \underline{MC}_{s\CGS(\A)}.$$
\end{theorem}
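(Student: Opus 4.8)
The plan is to derive the theorem from an arbitrary-base version of Proposition \ref{deformation}, together with a matching of equivalences of deformations with gauge equivalences, after first replacing $s\CGS(\A)$ by its normalised reduced subcomplex. By \cite[Prop.~3.16]{DVL} the inclusion $\CGSnr(\A)\hookrightarrow\CGS(\A)$ is a quasi-isomorphism, and I would first check, from the explicit Construction \ref{LL}, that the morphism $R_{c}$ of Theorem \ref{thmRc} restricts to $\End(s\CGSnr(\A))$, so that $s\CGSnr(\A)\hookrightarrow s\CGS(\A)$ becomes a morphism of $L_{\infty}$-algebras which is a quasi-isomorphism. Since $\mathrm{char}\,k=0$, it then induces a natural isomorphism $\underline{MC}_{s\CGSnr(\A)}\xrightarrow{\ \sim\ }\underline{MC}_{s\CGS(\A)}$ of functors on $\mathrm{Art}$ (invariance of the Maurer--Cartan deformation functor under $L_{\infty}$-quasi-isomorphism; cf.\ \cite{kontsevich}). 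It thus remains to produce a natural isomorphism $\mathrm{Def}_{\A}\cong\underline{MC}_{s\CGSnr(\A)}$, and from here on everything takes place in $s\CGSnr(\A)$.

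On objects, fix $(R,\mathfrak m)\in\mathrm{Art}$. Since $\mathfrak m$ is nilpotent, for $\te=(m',f',c')\in\mathfrak m\otimes_{k}s\CGSnr^{1}(\A)$ the element $MC(\te)$ is the same finite polynomial expression (quartic, as recorded in the proposition just before Proposition \ref{deformation}) built from the operations $P_{n}$, now evaluated $R$-multilinearly. I would then rerun the component-by-component computation from the proof of Proposition \ref{deformation}: writing $MC(\te)=\sum_{p+q=3}MC(\te)_{[p,q]}$, one recognises in $MC(\te)_{[0,3]}$ the associativity defect of $\bar m:=m+m'$, in $MC(\te)_{[1,2]}$ the functoriality defect of $\bar f:=f+f'$, and in $MC(\te)_{[2,1]}$, $MC(\te)_{[3,0]}$ the naturality and cocycle defects of $\bar c:=c+c'$. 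That computation used only $R$-multilinearity of the operations and never the specific ring $k[\epsilon]$, so it carries over and shows that $(R\otimes_{k}\A,\bar m,\bar f,\bar c)$ is an $R$-deformation of $\A$ exactly when $MC(\te)=0$. Conversely, a general $R$-deformation can be normalised so that its units are the original $1^{U}$ (the unit-deformation being gauge trivial, as in the classical algebra case), after which unitality of $\bar m$ together with $\bar f^{1_{U}}=\mathrm{id}$ and $\bar c^{u,v}=1$ for identity $u$ or $v$ force the difference cochain to be normalised and reduced. This gives a bijection between objects of $\mathrm{Def}_{\A}(R,\mathfrak m)$ and $MC(\mathfrak m\otimes_{k}s\CGSnr(\A))$.

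On morphisms, I would match equivalences with gauge transformations. An equivalence of $R$-deformations is an $R$-linear isomorphism of the deformed prestacks reducing to the identity modulo $\mathfrak m$ (for prestacks this also carries coherence data relating the twists); writing it as $\mathrm{id}+\psi$ with $\psi\in\mathfrak m\otimes_{k}s\CGSnr^{0}(\A)=\mathfrak m\otimes_{k}\big(\textbf{C}^{0,1}(\A)\oplus\textbf{C}^{1,0}(\A)\big)$, the conditions that $\mathrm{id}+\psi$ intertwines the two deformed structures unwind, again component by component, into the single identity $\te'=e^{\psi}\cdot\te$ for the gauge action on Maurer--Cartan elements determined by $d$ and the $L_{n}$ (the gauge action preserving the Maurer--Cartan locus). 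Hence isomorphism classes of $R$-deformations are in bijection with $\underline{MC}(\mathfrak m\otimes_{k}s\CGSnr(\A))$. Both this bijection and the one of the previous paragraph are visibly compatible with base change along morphisms in $\mathrm{Art}$, so they assemble into the sought natural isomorphism $\mathrm{Def}_{\A}\cong\underline{MC}_{s\CGSnr(\A)}$; composing with the isomorphism of the first paragraph finishes the argument.

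The main obstacle I expect is the morphism-level comparison in the third step: one has to verify that the gauge action coming from the \emph{full} $L_{\infty}$-structure on $s\CGSnr(\A)$ — whose higher brackets $L_{n}$ genuinely involve the twists $c$ and are considerably more intricate than their presheaf counterparts in \cite{hawkins} — matches exactly the $2$-categorical notion of equivalence of prestack deformations, coherence data and all; this is where most of the bookkeeping, and the content going beyond \cite[Thm.~3.19]{DVL}, resides. A more technical point to be pinned down en route is the claim in the first paragraph that $R_{c}$ preserves the normalised reduced subcomplex, which must be checked against the explicit auxiliary simplices $\si_{a}(i)$ and $\si(a,b)$ produced in Construction \ref{LL}.
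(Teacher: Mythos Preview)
The paper does not actually prove this theorem: immediately before it, the text reads ``The following theorem, of which the proof will appear elsewhere, \dots''. There is therefore no proof in the paper to compare your proposal against.

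That said, your outline follows the natural strategy suggested by the paper's own setup: extend Proposition \ref{deformation} from $k[\epsilon]$ to an arbitrary $(R,\mathfrak{m})\in\mathrm{Art}$, then match equivalences of deformations with gauge equivalence, passing to the normalised reduced subcomplex along the way. You correctly identify the two points where real work is needed beyond what the paper provides: (i) checking that the $\Quiltb[[c]]$-action restricts to $s\CGSnr(\A)$ (this is not immediate from Construction \ref{LL}, since the auxiliary simplices $\si_{a}(i)$ and $\si(a,b)$ may well be degenerate, and one must argue that the resulting terms vanish or cancel on normalised reduced inputs); and (ii) the gauge-equivalence step, where an equivalence of prestack deformations carries $2$-categorical coherence data (a natural isomorphism, not merely a natural transformation, compatible with the twists), and matching this with the gauge action of the full $L_\infty$-structure---including the higher brackets $L_n^r$ with $r\geq 1$---is genuinely delicate. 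Your normalisation step (forcing the deformed units, identity functors, and boundary twists to be the original ones) also hides nontrivial work in the prestack setting, since these conditions interact. These are precisely the sort of details one would expect the deferred proof to address.
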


\appendix

\section{Generator-Free Description of the morphism $\NSOp \longrightarrow \Multitr$}\label{appgenfree}

In this appendix, we provide a generator-free description of the morphism $\NSOp \longrightarrow \Multitr$ from lemma \ref{NSOpDelta}. Although per construction we have a morphism of operads induced from the generators $E_i$ of $\NSOp$, we consider it valuable in practice to have an explicit definition.
 
For $I$ an indexed tree in $\NSOp(\nth{q};\sum_{i=1}^n q_i - n+1)$, we call $l_I =\sum_{i=1}^n q_i - n+1$ the \emph{number of leaves} of $I$. Given a number $m\in \N$, we call the interval
$$[m,m+l_I] = \{ m , m+1, \ldots, m+l_I \}$$
the \emph{numbering set of leaves} of $I$.

\begin{constr}\label{NSOpMultiD_alt}
For $I$ an indexed tree in $\NSOp(\nth{q};\sum_{i=1}^n q_i -n +1)$, let $I_j$ be the maximal subtree of $I$ with root $j$. Let $u$ be the root of $I$ with children $u_1 \tre_I \ldots \tre_I u_k$ which have index $i_j:= I(u,u_j)$, then $I$ decomposes as follows
$$I = \tikzfig{Decomposition_I}$$
Given a number $m\in \N$, we define a non-decreasing map $\zeta(I,m): [q_u] \longrightarrow [m,m+l_I]$ as follows
$$ \zeta(I,m)(t) = \begin{cases} m + t & \text{ for } 0 \leq t < i_1  \\
m + \sum_{j=1}^s l_{I_j} + t & \text{ for } i_s\leq t < i_{s+1} \\
m + \sum_{j=1}^k l_{I_j} + t & \text{ for } i_s \leq t \leq q_u \end{cases} $$
which determines where the leaves of the root of $I$ are placed in its numbering set of leaves $[m,m+l_I]$.

In order to define the tuple $\zeta_I = (\zeta_{I,1},\ldots,\zeta_{I,n}) \in \Multitr(\nth{q};\sum_{i=1}^nq_i - n + 1)$, we run inductively through the tree $I$ from root $u$ to leaves setting 
$$\zeta_{I,u} := \zeta(I,0):[q_u] \longrightarrow [l_I]$$
and for vertex $a$ with child $b$ and index $i:=I(a,b)$ we set
$$\zeta_{I,b} := \zeta(I_b, \zeta_{I,a}(i-1)): [q_b] \longrightarrow [ \zeta_{I,a}(i-1), \zeta_{I,a}(i-1) + l_{I_b}] \hookrightarrow [l_I]$$
with $l_I = \sum_{i=1}^nq_i - n + 1$.
\end{constr}
 
Note that construction \ref{NSOpDelta} corresponds to the above construction applied to the generators $E_i$. As such, if the generator-free description defines a morphism of operads, they coincide.
 
\begin{prop}
Construction \ref{NSOpMultiD_alt} defines a morphism of operads $\NSOp \longrightarrow \Multitr$.
\end{prop}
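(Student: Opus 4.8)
The plan is to show that Construction \ref{NSOpMultiD_alt} respects the defining relations \ref{NSOpRel_I} and \ref{NSOpRel_II} of $\NSOp$, since the map on generators $E_i$ is already known to coincide with Construction \ref{NSOpDelta}, which by Lemma \ref{multitr} defines a morphism of operads; hence once we know the generator-free recipe is operadic, the two descriptions automatically agree. To do this I would first isolate the genuinely new content, namely that Construction \ref{NSOpMultiD_alt} is \emph{compatible with operadic composition}: for indexed trees $I \in \NSOp(\nth{q};q)$ and $I' \in \NSOp(\fromto{q'}{m};q_i)$ we must verify
$$\zeta_{I \circ_i I'} = \zeta_I \circ_i \zeta_{I'}$$
in $\Multitr$. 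The recipe in Construction \ref{NSOpMultiD_alt} is purely structural — it only depends on the tree shape, the index function, and the leaf counts $l_{I_j}$ — so the argument is an induction on the number of vertices of $I$ that tracks how the numbering set of leaves $[m, m+l_I]$ is partitioned among the subtrees.

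First I would record the basic additivity $l_{I \circ_i I'} = l_I + l_{I'} - 1$ and, more importantly, the key lemma that the function $\zeta(I,m)$ of Construction \ref{NSOpMultiD_alt} satisfies an ``associativity of numbering'': if one grafts $I'$ into $I$ at the $i$-th leaf of vertex $a$, then the numbering set allotted to the subtree $I'$ inside $I \circ_i I'$ is exactly the interval $[\zeta_{I,a}(i-1), \zeta_{I,a}(i-1) + l_{I'}]$, and the numbering of all the leaves that were to the left of this leaf is unchanged while those to the right are shifted by $l_{I'} - 1$. This is precisely mirrored by how $\circ_i$ acts in $\Multitr$ via the maps $\al,\be$ underlying an extension (cf. the discussion of $Ext(T,T',i)$ and the pair $(\al,\be)$ in \S\ref{parbrace}): $\al$ embeds the leaves of $I'$ consecutively and $\be$ contracts. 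So the verification reduces to checking that the two bookkeeping schemes produce the same non-decreasing maps $[q_b] \longrightarrow [l_{I \circ_i I'}]$ for every vertex $b$, which one does by splitting into the cases $b$ a vertex of $I$ below/left of the grafting point, $b$ a vertex of $I$ above/right of it, $b$ the root of $I'$, and $b$ a proper descendant in $I'$ — in each case both sides are given by the same explicit formula in terms of partial sums of leaf counts.

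For the relations themselves: relation \ref{NSOpRel_I} and relation \ref{NSOpRel_II} are instances of composing two single-edge trees $E_i$, and on such trees Construction \ref{NSOpMultiD_alt} evaluates (by unfolding the recursion for a depth-one or depth-two tree) to exactly the maps written down in Construction \ref{NSOpDelta}. Therefore the computations already carried out in the proof of Lemma \ref{multitr} — the two explicit calculations of $\zeta_{E_i} \circ_2 \zeta_{E_j}$ versus $\zeta_{E_{i-1+j}} \circ_1 \zeta_{E_i}$ and the analogous one for \ref{NSOpRel_II} — apply verbatim, and I would simply invoke them rather than redo them. The equivariance (respect for the $\Ss_n$-action by relabelling vertices) is immediate since the recipe uses only the tree-and-index data, which transforms covariantly under relabelling.

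The main obstacle I anticipate is purely organizational rather than conceptual: carefully setting up the case analysis in the composition-compatibility lemma so that the index shifts line up without sign or off-by-one errors, in particular handling the boundary values $t = i_s$ (where a leaf coincides with the index of a grafted subtree) and the behaviour of $\zeta(I_b, \zeta_{I,a}(i-1))$ at its endpoints. Once the interval-partition lemma is stated cleanly, each case is a one-line substitution. An alternative, lighter route — which I would mention — is to avoid the direct composition check entirely: observe that Construction \ref{NSOpMultiD_alt} restricted to the subtrees $E_i$ reproduces Construction \ref{NSOpDelta}, that $\NSOp$ is generated by the $E_i$, and that Construction \ref{NSOpMultiD_alt} is manifestly compatible with the decomposition $I = E_{i_1}(\,\cdots\,) $ of an indexed tree into a root edge followed by subtrees (this is exactly what the recursion in the construction encodes); hence $\zeta_I$ as defined equals the image of $I$ under the operad morphism of Lemma \ref{multitr}, and being a composite of operad-morphism data it is itself an operad morphism. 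This makes the proposition essentially a consistency check, and is the version I would write up if the first route's case analysis becomes unwieldy.
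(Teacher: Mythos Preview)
Your core approach—verifying $\zeta_{I\circ_i I'} = \zeta_I \circ_i \zeta_{I'}$ by a case analysis on the position of each vertex relative to the grafting site—is exactly what the paper does. The paper's case split is threefold (vertex outside the grafted subtree, vertex in the image of $I'$, vertex above $i$ in $I$ but not in the image of $I'$), which matches your four cases after merging ``root of $I'$'' with ``proper descendant in $I'$''; the paper argues each case directly from the leaf-count bookkeeping rather than by a formal induction on the number of vertices, but the content is the same.

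One point of confusion worth flagging: your framing around ``respecting the defining relations \ref{NSOpRel_I} and \ref{NSOpRel_II}'' is redundant once you have established composition compatibility for arbitrary $I,I'$. The relations live inside $\NSOp$ already, so any map out of $\NSOp$ that respects \emph{all} compositions automatically respects them; there is nothing further to check, and invoking the computations from Lemma~\ref{multitr} at that stage adds nothing. Your ``lighter route'' is a genuine alternative—show agreement with Construction~\ref{NSOpDelta} on generators and compatibility with the root-edge decomposition, then conclude the generator-free recipe coincides with the morphism from Lemma~\ref{multitr}—but the paper opts for the direct verification, which has the virtue of making the proposition self-contained and independent of Lemma~\ref{multitr}.
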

\begin{proof}
Let $I\in \NSOp(\nth{q}; q)$ and $I' \in \NSOp(\fromto{q'}{m}; q')$ and consider their composition $I'':= I \circ_i I'$. We will show that 
$$\zeta_{I''} = \zeta_I \circ_i \zeta_{I'}$$
As in construction \ref{NSOpMultiD_alt}, let $I_j, I'_j$ and $I''_j$ be the maximal subtrees of respectively $I,I'$ and $I''$ with root $j$. Due to equivariance, we can assume that a vertex $j$ belongs to the subtree $I_i$ iff $j\geq i$. We then have three cases to consider which we depict diagrammatically as follows 
$$\scalebox{0.9}{$\tikzfig{Triangle_i}$}$$
\begin{enumerate}
\item For $j<i$, if we contract the subtree $I_i$ and $I''_i$ to a single vertex with number of leaves $l_{I_i} = l_{I''_i}$, we obtain the same indexed tree $I\setminus I_i = I'' \setminus I''_i$ in which $I_j$ and $I''_j$ coincide. Thus, it is easy to see that 
$$\zeta_{I'',j} = \zeta_{I,j}$$
\item If $j\geq i$ and $j$ lies in the image $\Image(I')$ of $I'$ in $I''$, then $I''_j$ consists of the subtree $I'_{j-i+1}$ with a sequence of subtrees $I_{t},\ldots,I_{t'}$ placed on top.
$$\scalebox{0.9}{$\tikzfig{triangle_case2}$}$$
 In this case, $\zeta_{I',j-i+1}$ determines where the leaves of $j$ are placed in $I'$ and $\zeta_{I,i}$ determines where the leaves of $i$ in $I$ are placed. As a result, we see that
$$\zeta_{I'',j} = \zeta_{I,i}\circ\zeta_{I',j-i+1}$$
which determines where the leaves of $j$ are put in $I''$.
\item If $j\geq i$ and $j$ does not lie in the image of $I'$ in $I''$, then $j-m+1$ lies in $I$ above $i$. In this case, the subtrees $I_{j-m+1}$ and $I''_j$ are equal. If the parent $a$ of $j$ in $I''$ does not lie in $\Image(I')$, then we clearly have 
$$\zeta_{I'',j} = \zeta_{I,j-m+1}.$$
 If $a$ lies in $\Image(I')$, then $\zeta_{I'',a}=\zeta_{I,i}\circ \zeta_{I',a-i+1}$ due to the previous case.
 $$\scalebox{0.9}{$\tikzfig{triangle_case3}$}$$ Moreover, the index $I''(a,j)$ then equals $I(i,j) - \zeta_{I',a-i+1}(0)$. Hence, we have that 
 $$\zeta_{I'',j} = \zeta_{I,j-m+1}$$
\end{enumerate}
\end{proof}

\def\cprime{$'$}
\providecommand{\bysame}{\leavevmode\hbox to3em{\hrulefill}\thinspace}
\bibliography{Bibfile}

\def\cprime{$'$} \def\cprime{$'$}
\providecommand{\bysame}{\leavevmode\hbox to3em{\hrulefill}\thinspace}
\providecommand{\MR}{\relax\ifhmode\unskip\space\fi MR }
\providecommand{\MRhref}[2]{%
  \href{http://www.ams.org/mathscinet-getitem?mr=#1}{#2}
}
\providecommand{\href}[2]{#2}
\begin{thebibliography}{10}

\bibitem{barmeierfregier}
S.~Barmeier and Y.~Fr\'{e}gier, \emph{Deformation-obstruction theory for
  diagrams of algebras and applications to geometry}, J. Noncommut. Geom.
  \textbf{14} (2020), no.~3, 1019--1047. \MR{4170647}

\bibitem{DVL}
H.~Dinh~Van and W.~Lowen, \emph{The {G}erstenhaber-{S}chack complex for
  prestacks}, Adv. Math. \textbf{330} (2018), 173--228.

\bibitem{FMY09}
Y.~Fr{\'e}gier, M.~Markl, and D.~Yau, \emph{The {$L_\infty$}-deformation
  complex of diagrams of algebras}, New York J. Math. \textbf{15} (2009),
  353--392. \MR{2530153 (2011b:16039)}

\bibitem{gerstenhaberschack}
M.~Gerstenhaber and S.~D. Schack, \emph{On the deformation of algebra morphisms
  and diagrams}, Trans. Amer. Math. Soc. \textbf{279} (1983), no.~1, 1--50.
  \MR{MR704600 (85d:16021)}

\bibitem{gerstenhaberschack1}
\bysame, \emph{Algebraic cohomology and deformation theory}, Deformation theory
  of algebras and structures and applications (Il Ciocco, 1986), NATO Adv. Sci.
  Inst. Ser. C Math. Phys. Sci., vol. 247, Kluwer Acad. Publ., Dordrecht, 1988,
  pp.~11--264. \MR{MR981619 (90c:16016)}

\bibitem{gerstenhabervoronov}
M.~Gerstenhaber and A.~A. Voronov, \emph{Homotopy {$G$}-algebras and moduli
  space operad}, Internat. Math. Res. Notices (1995), no.~3, 141--153
  (electronic). \MR{MR1321701 (96c:18004)}

\bibitem{getzlerjones}
E.~Getzler and J.~D.~S. Jones, \emph{Operads, homotopy algebra and iterated
  integrals for double loop spaces}, preprint hep-th/9403055.

\bibitem{hawkins}
E.~Hawkins, \emph{Operations on the hochschild bicomplex of a diagram of
  algebras}, arXiv:2002.00886.

\bibitem{kontsevich}
M.~Kontsevich, \emph{Deformation quantization of {P}oisson manifolds}, Lett.
  Math. Phys. \textbf{66} (2003), no.~3, 157--216. \MR{MR2062626}

\bibitem{kontsevichsoibelmandeligne}
M.~Kontsevich and Y.~Soibelman, \emph{Deformations of algebras over operads and
  the {D}eligne conjecture}, Conf\'erence {M}osh\'e {F}lato 1999, {V}ol. {I}
  ({D}ijon), Math. Phys. Stud., vol.~21, Kluwer Acad. Publ., Dordrecht, 2000,
  pp.~255--307.

\bibitem{lowenalg}
W.~Lowen, \emph{Algebroid prestacks and deformations of ringed spaces}, Trans.
  Amer. Math. Soc. \textbf{360} (2008), no.~9, 1631--1660, preprint
  math.AG/0511197v2.

\bibitem{lowenvandenberghCCT}
W.~Lowen and M.~van~den Bergh, \emph{A {H}ochschild cohomology comparison
  theorem for prestacks}, Trans. Amer. Math. Soc. \textbf{363} (2011), no.~2,
  969--986. \MR{2728592 (2012c:16033)}

\bibitem{mccluresmith}
J.~E. McClure and J.~H. Smith, \emph{A solution of {D}eligne's {H}ochschild
  cohomology conjecture}, Recent progress in homotopy theory ({B}altimore,
  {MD}, 2000), Contemp. Math., vol. 293, Amer. Math. Soc., Providence, RI,
  2002, pp.~153--193.

\end{thebibliography}
\bibliographystyle{amsplain}
\end{document}